\newcommand{\fm}{\mathfrak{m}}
\newcommand{\fn}{\mathfrak{n}}
\newcommand{\fp}{\mathfrak{p}}
\newcommand{\fq}{\mathfrak{q}}
\newcommand{\fP}{\mathfrak{P}}
\newcommand{\Spec}{\operatorname{Spec}}
\newcommand{\HT}{\operatorname{ht}}
\newcommand{\depth}{\operatorname{depth}}
\newcommand{\colim}{\operatorname{colim}}
\newcommand{\coker}{\operatorname{coker}}
\newcommand{\cO}{\mathcal{O}}
\newcommand{\cU}{\mathcal{U}}
\newcommand{\bA}{\mathbf{A}}
\newcommand{\bE}{\mathbf{E}}
\newcommand{\bP}{\mathbf{P}}
\newcommand{\bZ}{\mathbf{Z}}
\newcommand{\Tag}[1]{\href{https://stacks.math.columbia.edu/tag/#1}{\texttt{#1}}}
\newcommand{\citestacks}[1]{\cite[Tag \Tag{#1}]{stacks}}
\newcommand{\citetwostacks}[2]{\cite[Tags \Tag{#1} and \Tag{#2}]{stacks}}
\newtheorem{Thm}{Theorem}[section]
\newtheorem{Lem}[Thm]{Lemma}
\newtheorem{Cor}[Thm]{Corollary}
\theoremstyle{definition}
\newtheorem{Def}[Thm]{Definition}
\theoremstyle{remark}
\newtheorem{Fact}[Thm]{Fact}
\newtheorem{Rem}[Thm]{Remark}
\newtheorem{Clm}[Thm]{Claim}
\newtheorem{Ques}[Thm]{Question}
\title{Formal lifting of dualizing complexes and consequences}
\author{Shiji Lyu}
\address{Department of Mathematics, Statistics, and Computer Science\\University of Illinois at Chicago\\Chicago, IL
60607-7045\\USA}
\email{\href{mailto:slyu@uic.edu}{slyu@uic.edu}}
\urladdr{\url{https://homepages.math.uic.edu/~slyu/}}
\begin{document}

\begin{abstract}
    We show that for a Noetherian ring $A$ that is $I$-adically complete for an ideal $I$,
    if $A/I$ admits a dualizing complex, so does $A$.
    This gives an alternative proof of the fact that a Noetherian complete local ring admits a dualizing complex.
    We discuss several consequences of this result.
    We also consider a generalization of the notion of dualizing complexes to infinite-dimensional rings and prove the results in this generality.
    In addition, we give an alternative proof of the fact that every excellent Henselian local ring admits a dualizing complex,
    using ultrapower.
\end{abstract}

\maketitle

%

\section{Introduction}

The following problem has a long history in commutative algebra, cf. \cite[Remarque 7.4.8]{EGA4_2}.
\begin{Ques}\label{ques:WhatPhasLift}
    What properties $\bP$ of rings satisfy the \emph{lifting property},
    that is, for every Noetherian ring $A$ and ideal $I$ of $A$, if $A$ is $I$-adically complete and $A/I$ satisfies $\bP$,
    then $A$ satisfies $\bP$.
\end{Ques}

There have been numerous studies on the lifting property and its variants,
for many important properties $\bP$.
For example, lifting property holds for $\bP$=``Nagata'' \cite{Marot-Nagata-Lift} and $\bP$=``quasi-excellent'' \cite{formal-lifting-excellence-Gabber},  but not for $\bP$=``excellent'' or $\bP$=``universally catenary'' \cite{Greco-Universal-Catenary-No-Lift}.
We refer the reader to \cite[Appendix]{formal-lifting-excellence-Gabber} for more information.

One main objective of this article is to show that the property of admitting a dualizing complex satisfies the lifting property,
which, as fundamental as it may be, seems to be missing in the literature.
For applications, especially Theorem \ref{thm:etlocHasDual}, we consider the following generalization of dualizing complexes to infinite-dimensional rings.

\begin{Def}\label{def:weakDual0}
    Let $A$ be a Noetherian ring.
    Let $K\in D(A)$.
    We say $K$ is a \emph{pseudo-dualizing complex}
    if $K\in D^b_{Coh}(A)$ and $K_\fp$ is a dualizing complex for $A_\fp$ for all $\fp\in\Spec(A)$.
\end{Def}

We now state our main theorem.
Note that applying to the case $A$ local and $I$ maximal, we recover the existence of dualizing complexes for Noetherian complete local rings.
Therefore, this fact now admits a proof that does not use the Cohen structure theorem.

\begin{Thm}[=Theorem \ref{thm:GLOBALdeform}]\label{thm:GLOBALdeform0}
    Let $A$ be a Noetherian ring, $I$ an ideal of $A$.
    Assume that $A$ is $I$-adically complete. 
    If $A/I$ admits a pseudo-dualizing (resp. dualizing) complex $K_1$,
    then $A$ admits a pseudo-dualizing (resp. dualizing) complex $K$ such that $R\operatorname{Hom}_{A}(A/I,K)\cong K_1$.
\end{Thm}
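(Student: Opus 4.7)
The plan is to reduce the problem to a square-zero lifting, iterate along the natural tower $\{A/I^n\}_{n \geq 1}$, and then assemble via $I$-adic completeness. Concretely, the first stage would isolate a key lemma: for any Noetherian surjection $B \twoheadrightarrow A$ with kernel $J$ satisfying $J^2 = 0$, every pseudo-dualizing (resp.\ dualizing) complex $K_A$ on $A$ lifts to a pseudo-dualizing (resp.\ dualizing) complex $K_B$ on $B$ such that $R\operatorname{Hom}_B(A, K_B) \cong K_A$. Iterating along the surjections $A/I^{n+1} \twoheadrightarrow A/I^n$, whose kernels $I^n/I^{n+1}$ are square-zero for $n \geq 1$ because $(I^n/I^{n+1})^2 \subset I^{2n}/I^{n+1} = 0$, then produces a compatible tower $(K_n)_{n \geq 1}$ with $K_1$ the given complex and transition isomorphisms $R\operatorname{Hom}_{A/I^{n+1}}(A/I^n, K_{n+1}) \cong K_n$.

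I expect the square-zero lifting lemma to be the technical heart of the proof. My approach is through derived deformation theory: the isomorphism classes of lifts form a torsor (if nonempty) under a suitable $\Ext^1$, while the obstruction to nonemptiness lies in an $\Ext^2$; both groups are built from $R\operatorname{Hom}_A(K_A, K_A)$ paired with $J$. The defining property $R\operatorname{Hom}_A(K_A, K_A) \cong A$ of a (pseudo-)dualizing complex---which is local at each prime and globalizes via $K_A \in D^b_{Coh}(A)$---collapses these Ext groups to ordinary cohomology of $J$ viewed as a complex in degree zero, forcing the obstruction in degree $\geq 1$ to vanish. Once a lift $K_B \in D(B)$ is constructed, its coherence and dualizing property transport through the topological homeomorphism $\Spec B \to \Spec A$ (since $J$ is nilpotent) together with the already-established dualizing property of $K_A \cong R\operatorname{Hom}_B(A, K_B)$.

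For the assembly stage, I would set $K := R\varprojlim_n K_n$ in $D(A)$, regarding each $K_n$ as an $A$-complex via restriction along $A \to A/I^n$. The combination of $I$-adic completeness, Noetherianness, and tower compatibility should ensure $K \in D^b_{Coh}(A)$ via a derived Mittag--Leffler argument, and the identification $R\operatorname{Hom}_A(A/I, K) \cong K_1$ follows by commuting $R\operatorname{Hom}_A(A/I, -)$ with the derived limit. To verify $K$ is pseudo-dualizing (resp.\ dualizing), one localizes at each $\fp \in \Spec A$. For $\fp \supset I$, which includes all maximal ideals (since $I$ lies in the Jacobson radical of the complete ring $A$), the required property at $A_\fp$ follows from the tower construction combined with an input of the form ``a $J$-adically complete Noetherian local ring admits a dualizing complex lifting any dualizing complex for its reduction modulo $J$''---which is essentially the inductive content of the argument. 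The case $\fp \not\supset I$ is the most delicate remaining point and will presumably require a separate argument exploiting the global coherence of $K$ to propagate the dualizing property from $V(I)$ to its complement.
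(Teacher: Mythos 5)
Your overall strategy (square-zero lifting along the thickenings $A/I^{n+1}\twoheadrightarrow A/I^n$ in the ``upper shriek'' direction, with the obstruction killed by biduality $R\operatorname{Hom}(K,K)\cong A$, followed by an $I$-adic assembly) is the same as the paper's, and your description of the obstruction class and its vanishing is essentially the paper's argument (the paper carries it out by hand on complexes of injectives, since no reference exists for this dual deformation statement). The genuine gap is in the assembly step. The tower you construct only gives natural maps in the \emph{direct} direction: the isomorphism $R\operatorname{Hom}_{A/I^{n+1}}(A/I^n,K_{n+1})\cong K_n$ composed with the evaluation map yields $K_n\to K_{n+1}$, and there are no canonical maps $K_{n+1}\to K_n$, so $R\varprojlim_n K_n$ is not defined by your data. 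Moreover, even with ad hoc choices it produces the wrong object: for $A=\bZ_p$, $I=(p)$, $K_1=\bF_p$ one gets $K_n\cong \bZ/p^n$, and any inverse limit along surjections is $\bZ_p$, which satisfies $R\operatorname{Hom}_A(\bF_p,\bZ_p)\cong\bF_p[-1]\neq K_1$; the correct answer is $\bZ_p[1]$, namely the derived $p$-completion of $\colim_n K_n\cong \bQ_p/\bZ_p$. This is what the paper does: after reducing to $I=fA$, it realizes $\colim_n K_n$ concretely as a bounded-below complex of $f$-power-torsion injective $A$-modules (injective hulls of the terms of an injective resolution of $K_1$, with differentials extended step by step via the vanishing obstruction), then takes the derived $f$-adic completion $K$; then $R\operatorname{Hom}_A(A/fA,K)\cong K_1$ is immediate, and coherence of $K$ follows from a derived Nakayama-type argument using derived completeness (the paper's Lemma \ref{lem:f!Cohf*Coh}), not a Mittag--Leffler argument.

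Two further points. In the pseudo-dualizing (possibly infinite-dimensional) case you must show the amplitudes of the $K_n$ are bounded uniformly in $n$ before any limit/colimit can be bounded; this is a real issue, handled in the paper by a uniform $\HT-\depth$ estimate for $A/f^nA$ (Lemma \ref{lem:coprofEstimate}) together with the Gor-2 property of $A/I$, and your sketch does not address it. On the other hand, the point you flag as ``most delicate''---primes $\fp\not\supset I$---is actually a non-issue: a complex in $D^b_{Coh}(A)$ is pseudo-dualizing as soon as it is dualizing at every maximal ideal, and since $A$ is $I$-adically complete every maximal ideal contains $I$, so $R\operatorname{Hom}_A(A/I,K)\cong K_1$ being pseudo-dualizing already forces $K$ to be pseudo-dualizing (the paper's Lemma \ref{lem:radReflect}); no propagation from $V(I)$ to its complement is needed.
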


This result implies, quite formally, certain openness results (Theorem \ref{thm:DualOpen} and Corollary \ref{cor:Gor2impliesopen}).
In turn, we characterize when a Noetherian scheme of dimension 1 admits a dualizing complex (Corollary \ref{cor:Dim1HasDual}),
and obtain that a quasi-excellent ring admits a pseudo-dualizing complex \'etale-locally (Theorem \ref{thm:etlocHasDual}),
via a known result of Hinich \cite{Hinich-hsHasDual} that an excellent Henselian local ring admits a dualizing complex.
We point out to the reader that the results in \S \ref{sec:Open} 
are new even for finite-dimensional rings to the best of our knowledge,
except where noted.

Another main objective of this article is to illustrate that Definition \ref{def:weakDual0} is, hopefully, 
a robust generalization of the classical concept of dualizing complexes.
We show that pseudo-dualizing complexes are preserved by upper shriek (Corollary \ref{cor:UpperShriek}),
induce a dualizing functor on $D^b_{Coh}$ (Corollary \ref{cor:DualizingFunctorGood}),
and are unique up to twist (Corollary \ref{cor:DualizeUniqueuptoL}).
The author believes it should be possible to develop the coherent duality theory for infinite-dimensional schemes using pseudo-dualizing complexes in place of dualizing complexes.
Moreover, Sharp's conjecture is true in this generality by the same proof, see Theorem \ref{thm:SharpKawasaki}.
One can also expect that the concept of fundamental dualizing complexes \cite{fundamental-dual} and related results can be extended to this generality.

To make our article accessible to a broader audience, we do not use $\bE_\infty$- or animated rings and the derived $\infty$-category,
although they helped a lot in the thought process.
The author hopes that many results in this article can be generalized to truncated animated rings and dgas.
We also produce an alternative proof of Hinich's result, 
avoiding the use of dgas in the original proof.
In exchange, we use ultrapowers of a local ring.
We show a flatness result (Theorem \ref{thm:MaptoUltraFlat}) which may be of independent interest.
It is a strengthening of \cite[Lemma 3.5]{Lyu-Splinter}.
We will not use Hinich's result except for Theorem \ref{thm:etlocHasDual}.\\

Our article is structured as follows. 
\S\ref{sec:DbCoh} is a short preparation. 
In \S\ref{sec:DualHs} we give an alternative proof of Hinich's result.
In \S\ref{sec:PesudoDual}, we introduce pseudo-dualizing complexes and show several desirable properties.
In \S\ref{sec:Lift}, we prove our main result on formal lifting.
In \S\ref{sec:Open} and \S\ref{sec:OneDim} we discuss consequences.
Finally, in \S\ref{sec:quot} we make some remarks on quotient of Cohen-Macaulay rings that do not rely on the material on pseudo-dualizing complexes.\\

We thank Rankeya Datta, Longke Tang, and Kevin Tucker for helpful discussions.
We thank Karl Schwede and Kevin Tucker for bringing the lifting problem of dualizing complexes to the author's attention.\\


We use the following facts about dualizing complexes for local rings without explicit reference.
They all follow from \citestacks{0A7U}, for $M=A$.
\begin{Fact}
    Let $A$ be a Noetherian local ring,
    $K$ a dualizing complex for $A$.
    Then $\dim A-\depth A=\max \{b-a\mid a\leq b, H^a(K)\neq 0, H^b(K)\neq 0\}$.
    If $a\in\bZ$ is such that $\dim\operatorname{Supp}(H^a(K))=\dim A$,
    then $K\in D^{\geq a}(A)$.
    $K$ is normalized if and only if the minimal $a$ so that $H^a(K)\neq 0$ is $-\dim A$.
\end{Fact}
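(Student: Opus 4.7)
The plan is to derive all three assertions from \citestacks{0A7U} applied with $M = A$. That result provides, for a Noetherian local ring $A$ equipped with a normalized dualizing complex $\omega^\bullet$ and a finitely generated $A$-module $M$: (i) $\Ext^i_A(M,\omega^\bullet) \neq 0$ if and only if $-\dim M \leq i \leq -\depth M$; and (ii) $\dim \operatorname{Supp}(\Ext^i_A(M,\omega^\bullet)) \leq -i$ for every $i$.

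The first move is to reduce to the normalized case. Any dualizing complex on a local ring has the form $K \cong \omega^\bullet[n]$ for some $n \in \bZ$, so $H^i(K) = H^{i+n}(\omega^\bullet)$. Specializing (i) to $M = A$ shows that $H^j(\omega^\bullet) \neq 0$ precisely for $-\dim A \leq j \leq -\depth A$, an interval whose endpoints are attained. Translating back, the set of indices $a$ with $H^a(K) \neq 0$ is contained in (and attains the endpoints of) the interval $[-\dim A - n,\,-\depth A - n]$; its diameter is $\dim A - \depth A$, establishing the first assertion, and its minimum equals $-\dim A$ exactly when $n = 0$, i.e.\ exactly when $K$ is normalized, giving the third assertion.

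For the second assertion, assume $\dim \operatorname{Supp}(H^a(K)) = \dim A$. Rewriting via $H^a(K) = H^{a+n}(\omega^\bullet)$ and applying (ii) yields $\dim A \leq -(a+n)$, so $a + n \leq -\dim A$. Combined with the lower bound $a + n \geq -\dim A$ from (i), this forces $a + n = -\dim A$, meaning $a$ is precisely the smallest degree in which $K$ has non-vanishing cohomology, so $K \in D^{\geq a}(A)$. There is no real obstacle here: all the substance is contained in the quoted Stacks tag, and the only work is to keep track of the shift $n$ relating $K$ to its normalization.
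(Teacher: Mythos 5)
Your argument is correct and is essentially the paper's own justification: the paper simply states that all three assertions follow from \citestacks{0A7U} applied with $M=A$, and your write-up fills in exactly that bookkeeping (reduction to a normalized dualizing complex via the shift $K\cong\omega^\bullet[n]$, then reading off the interval of nonvanishing cohomology and the support bound). No gaps to report.
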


\section{Bounded pseudo-coherent complexes}\label{sec:DbCoh}
\begin{Lem}\label{lem:0A6Aforiso}
    Let $A$ be a Noetherian ring. Let $K,M\in D^b_{Coh}(A)$.
    Let $S$ be a multiplicative subset of $A$.
    Assume $S^{-1}K\cong S^{-1}M$.
    Then for some $f\in S$,
    $K_f\cong M_f$.
\end{Lem}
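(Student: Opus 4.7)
The plan is to first descend the given isomorphism from $S^{-1}A$ to some localization $A_{f_1}$, then use boundedness and coherence to enlarge $f_1$ so that the descended morphism becomes an honest isomorphism.

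First I would invoke the standard fact (\citestacks{0A6A} or its variant) that for $K,M\in D^b_{Coh}(A)$ the formation of $R\operatorname{Hom}_A(K,M)$ commutes with localization, so that
\[
\operatorname{Hom}_{D(S^{-1}A)}(S^{-1}K,S^{-1}M)=S^{-1}\operatorname{Hom}_{D(A)}(K,M)=\colim_{f\in S}\operatorname{Hom}_{D(A_f)}(K_f,M_f).
\]
Applied to the given isomorphism $\beta\colon S^{-1}K\qis S^{-1}M$, this produces some $f_1\in S$ and a morphism $\alpha\colon K_{f_1}\to M_{f_1}$ in $D(A_{f_1})$ whose further localization to $S^{-1}A$ is $\beta$.

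Next I would pass to the cone $C=\operatorname{cone}(\alpha)\in D^b_{Coh}(A_{f_1})$. Since $\beta$ is an isomorphism, localizing at $S$ kills $C$, i.e.\ $S^{-1}C\cong 0$ in $D(S^{-1}A)$. Because $C$ is bounded, only finitely many cohomology modules $H^i(C)$ are nonzero, and each is finitely generated over $A_{f_1}$. Each such $H^i(C)$ becomes zero after localizing at $S$, so its (finitely many) generators are annihilated by some element of $S$; taking a common multiple yields $f_2\in S$ with $f_2\cdot H^i(C)=0$ for all $i$, and hence $C_{f_2}=0$ in $D(A_{f_1 f_2})$. Setting $f=f_1 f_2\in S$, the induced map $\alpha_{f_2}\colon K_f\to M_f$ has vanishing cone and is therefore an isomorphism.

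The only real subtlety is the first step: one needs that $R\operatorname{Hom}$ commutes with filtered colimits of localizations on $D^b_{Coh}$, which in turn uses that $K$ is pseudo-coherent (so it admits a bounded-above complex of finite free resolutions whose $\operatorname{Hom}$ into $M$ commutes with localization) and that $M$ is bounded (so truncation issues do not interfere). Once that is in hand, the rest of the argument is a routine finiteness bookkeeping afforded by $C\in D^b_{Coh}$.
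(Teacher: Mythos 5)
Your proof is correct, and it shares its first half with the paper's: both arguments rest on the identification $S^{-1}\operatorname{Hom}_{D(A)}(K,M)=\operatorname{Hom}_{D(S^{-1}A)}(S^{-1}K,S^{-1}M)$ from \citestacks{0A6A}, which lets you descend the given isomorphism to a morphism $\alpha\colon K_{f_1}\to M_{f_1}$. Where you diverge is in the finishing step. The paper applies the same Hom formula twice more: it spreads out the inverse of the isomorphism as well, and since the two compositions equal the identity in the localized Hom groups, they already equal the identity after inverting one further element of $S$; so the spread-out maps are inverse isomorphisms, with no need to look at cohomology at all. You instead spread out only the forward map and argue on its cone $C\in D^b_{Coh}(A_{f_1})$: since $S^{-1}C=0$ and $C$ has finitely many finitely generated cohomology modules, a single $f_2\in S$ (more precisely, its image in $A_{f_1}$) kills all of them, so $\alpha$ becomes an isomorphism over $A_{f_1f_2}$. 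Both routes are sound; the paper's is a one-line formal consequence of the displayed Hom identity, while yours trades the descent of the inverse for an elementary support/annihilator argument, which has the mild advantage of only needing the Hom-localization statement once and of making explicit why boundedness and coherence of the cone are what control the choice of $f$.
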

\begin{proof}
    For any $X,Y\in D^b_{Coh}(A)$,
    we have \[S^{-1}\operatorname{Hom}_{D(A)}(X,Y)=\operatorname{Hom}_{D(S^{-1}A)}(S^{-1}X,S^{-1}Y)\] by \citestacks{0A6A}.
    Thus we can spread out a quasi-isomorphism and its inverse and the results are inverse quasi-isomophisms.
\end{proof}

\begin{Lem}\label{lem:DescendDbCoh}
    Let $(A_i)_i$ be a direct system of rings,
    $A=\colim_i A_i$.
    Assume that $A_i\to A$ is flat for all $i$.
    Let $K\in D^b(A)$.
    Assume $K$ is pseudo-coherent.
    Then $K\cong K_i^\bullet\otimes_{A_i}^L A$ for some $i$ 
    and some  finite complex $K_i^\bullet$ of finitely presented $A_i$-modules.

    If $a\leq b\in\bZ$ are given such that $K\in D^{[a,b]}(A)$,
    we can choose  $K_i^\bullet$
    so that $K_i^m=0$ for all $m\not\in [a,b]$ and $K_i^m$ free for all $a<m\leq b$.
\end{Lem}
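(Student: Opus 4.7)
The plan is to represent $K$ by an explicit complex of modules that is finite enough to descend to some $A_i$, and then use the flatness of $A_i \to A$ to replace ordinary tensor product by derived tensor product. The first (unrefined) part of the statement follows from the refined version applied to any $a \leq b$ with $K \in D^{[a,b]}(A)$, so I focus on the refined version.

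First, I would use that $K$ is pseudo-coherent: by the standard characterization (\citestacks{064U}), there exists a quasi-isomorphism $P^\bullet \to K$ with $P^\bullet$ bounded above and each $P^m$ finite free; we may arrange $P^m = 0$ for $m > b$. Since $H^m(K) = 0$ for $m < a$, the canonical truncation $Q^\bullet := \tau^{\geq a} P^\bullet$ is still quasi-isomorphic to $K$; it satisfies $Q^m = P^m$ finite free for $a < m \leq b$, $Q^a = \coker(P^{a-1} \to P^a)$ finitely presented (as both $P^{a-1}$ and $P^a$ are finite free), and $Q^m = 0$ otherwise.

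Next, the complex $Q^\bullet$ is described by finitely many matrices over $A$: the differentials $d^m \colon P^m \to P^{m+1}$ for $a < m < b$, the presentation $\phi \colon P^{a-1} \to P^a$ defining $Q^a$ as its cokernel, and a chosen lift $\tilde d^a \colon P^a \to P^{a+1}$ of the differential $Q^a \to Q^{a+1}$. All these matrices have finitely many entries in $A = \colim_i A_i$, so they come from some $A_i$. The finitely many vanishing relations we need---namely $d^{m+1}\circ d^m = 0$ for the relevant $m$, and $\tilde d^a \circ \phi = 0$ (so that $\tilde d^a$ descends through $\coker\phi$)---hold in $A$, and hence, after possibly enlarging $i$, in $A_i$. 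This produces a finite complex $K_i^\bullet$ of $A_i$-modules of the prescribed shape, together with an isomorphism $K_i^\bullet \otimes_{A_i} A \cong Q^\bullet$ of complexes of $A$-modules.

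Finally, since $A_i \to A$ is flat, derived and ordinary base change agree, giving $K_i^\bullet \otimes_{A_i}^L A \cong K_i^\bullet \otimes_{A_i} A \cong Q^\bullet \cong K$ in $D(A)$. The only bookkeeping of substance is enlarging $i$ so that it simultaneously carries all matrix entries and all the finitely many vanishing relations; there is no real obstacle beyond this.
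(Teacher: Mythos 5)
Your proof is correct and follows essentially the same route as the paper: represent $K$ by a bounded-above finite free complex vanishing above degree $b$, take the canonical truncation $\tau_{\geq a}$ to get a finite complex of finitely presented modules, descend it to some $A_i$, and use flatness of $A_i\to A$ to identify ordinary and derived base change. The only difference is that you spell out the spreading-out of matrices and relations by hand, where the paper simply cites the Stacks project (Tag 05N7) for descending a finite complex of finitely presented modules along the filtered colimit.
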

\begin{proof}
    $K$ is represented by a bounded above complex $F^\bullet$ of finite free $A$-modules
    such that $F^m=0$ for all $m>b$.
    Let $K^\bullet$ be the complex of $A$-modules with
    $K^m=F^m\ (m> a), K^{a}=\coker(F^{a-1}\to F^a), K^m=0\ (m<a)$,
    that is, $K^\bullet$ is the canonical truncation $\tau_{\geq a}F^\bullet$.
    Then $K^\bullet$ also represents $K$,
    and $K^\bullet$ is a finite complex of finitely presented $A$-modules
    such that $K^m=0$ for all $m\not\in [a,b]$ and $K^m$ free for all $a<m\leq b$.
    By \citestacks{05N7}, there exists an index $i$ and a complex of finitely presented $A_i$-modules $K_i^\bullet$  
    such that $K_i^m=0$ for all $m\not\in [a,b]$ and $K_i^m$ free for all $a<m\leq b$,
    and that
    $K_i^\bullet\otimes_{A_i}A$ is isomorphic to $K^\bullet$ as cochain complexes.
    Since $A_i\to A$ is flat,  
    $K\cong K_i^\bullet\otimes_{A_i}^L A$.
\end{proof}

\section{Dualizing complexes for Henselian local rings}\label{sec:DualHs}

In this section we prove a generalization of the main result of \cite{Hinich-hsHasDual},
Theorem \ref{thm:HenselHasDual}.
We will only use the original result of \cite{Hinich-hsHasDual} for the application, 
Theorem \ref{thm:etlocHasDual}.

For ultraproducts of rings, first-order statements, and {\L}o\'s's Theorem,
see \cite[Chapter 2]{SchoutensBOOK}.

\begin{Lem}\label{lem:modabddImpliesbdd}
    Let $A$ be a Noetherian local ring or an ultraproduct of Noetherian local rings, $f\in A$ a noninvertible nonzerodivisor.
    Let $(E^\bullet,d^\bullet)$ 
    be a cochain complex of $A$-modules.
    If $E^\bullet/fE^\bullet$ is exact at some degree $m$
    and $E^{m-1},E^m,E^{m+1}$ are finite free,
    then $E^\bullet$ is exact at degree $m$.
\end{Lem}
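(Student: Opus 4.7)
The plan is to observe first that exactness at degree $m$ of both $E^\bullet$ and $E^\bullet/fE^\bullet$ depends only on the three-term subcomplex $E^{m-1}\xrightarrow{d^{m-1}} E^m\xrightarrow{d^m} E^{m+1}$, so we may replace $E^\bullet$ by this subcomplex and work only with three finite free terms.

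For the case $A$ Noetherian local, I would argue via the long exact sequence. Since each of $E^{m-1}, E^m, E^{m+1}$ is finite free and $f$ is a nonzerodivisor on $A$, multiplication by $f$ is injective on each term, giving a short exact sequence of three-term complexes
\[
    0\to E^\bullet\xrightarrow{f} E^\bullet\to E^\bullet/fE^\bullet\to 0.
\]
The resulting long exact sequence in cohomology shows that multiplication by $f$ is surjective on $H^m(E^\bullet)$. Since $A$ is Noetherian and $E^m$ is finite free, $H^m(E^\bullet)$ is a finitely generated $A$-module; since $f$ is noninvertible and $A$ is local, $f$ lies in the maximal ideal. Nakayama then forces $H^m(E^\bullet)=0$.

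For the case of an ultraproduct $A=\prod_i A_i/\mathcal{U}$, the difficulty is that $A$ is typically non-Noetherian, so neither Krull's intersection theorem nor Nakayama applies to $H^m(E^\bullet)$ directly. My plan is to transfer the Noetherian local case via {\L}o\'s's Theorem. Fix the ranks $r_{m-1}, r_m, r_{m+1}$ of the three finite free terms; then the implication to be proved becomes a first-order sentence in the language of rings with parameter $f$, quantifying over matrices $D$ of size $r_m\times r_{m-1}$ and $D'$ of size $r_{m+1}\times r_m$ with $D'D=0$. Indeed, both the hypotheses (that $f$ is a noninvertible nonzerodivisor, and that $E^\bullet/fE^\bullet$ is exact at $m$) and the desired conclusion (that $E^\bullet$ is exact at $m$) are first-order conditions on $(D,D',f)$. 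Lifting the matrices and $f$ to sequences $(D_i, D'_i, f_i)$, the Noetherian local case applies in $A_i$ for $\mathcal{U}$-almost every $i$, and {\L}o\'s transfers the conclusion back to $A$.

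The main obstacle is the ultraproduct case, where the standard finiteness tools fail outright; the key observation that makes the plan work is that, once the ranks of the three free terms are fixed, the statement is first-order, so {\L}o\'s reduces it cleanly to the Noetherian local case handled above.
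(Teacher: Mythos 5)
Your proposal is correct and follows essentially the same route as the paper: fix bases so the two differentials become matrices, observe that (for fixed ranks) the hypotheses and conclusion are first-order in the entries and $f$, reduce via {\L}o\'s to the Noetherian local case, and finish there with Nakayama. The only cosmetic difference is that you get $H^m(E^\bullet)=fH^m(E^\bullet)$ from the long exact sequence of $0\to E^\bullet\xrightarrow{f}E^\bullet\to E^\bullet/fE^\bullet\to 0$, while the paper deduces $\ker(d^m)=\operatorname{im}(d^{m-1})+f\ker(d^m)$ directly by an element computation using that $f$ is a nonzerodivisor on $E^{m+1}$; both then apply Nakayama to the finitely generated module $H^m(E^\bullet)$.
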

\begin{proof}
    Choose a basis of $E^{m-1},E^m,E^{m+1}$,
    so $d^{m-1}$ and $d^m$ are represented by matrices with coefficients in $A$.
    The statement of the lemma is a first-order statement of the coefficients and $f$, 
    so it suffices to show the lemma for a Noetherian local $A$.

    The condition $E^\bullet/fE^\bullet$ is exact at degree $m$ implies $\ker(d^m)=\operatorname{im}(d^{m-1})+(fE^m\cap \ker(d^m))$.
    Since $f$ is a nonzerodivisor on $E^{m+1}$,
    $fE^m\cap \ker(d^m)=f\ker(d^m)$.
    Thus $\ker(d^m)=\operatorname{im}(d^{m-1})+f \ker(d^m)$.
    By Nakayama's Lemma,
    $\ker(d^m)=\operatorname{im}(d^{m-1})$.
\end{proof}

\begin{Thm}\label{thm:MaptoUltraFlat}
    Let $(A,\fm,k)$ be a Noetherian local ring, $(B,\fn,l)$ a Noetherian local flat $A$-algebra with $\fn=\fm B$.
    Let $A_\natural$ be an ultrapower of $A$.
    Then any $A$-algebra map $B\to A_\natural$, if exists, is faithfully flat.
\end{Thm}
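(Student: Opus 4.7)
The plan is to reduce the statement to flatness. The map $B\to A_\natural$ is automatically local: since $\fn=\fm B$ and any element of $\fm$ lies in $\fm$ ultrafilter-often in $A$, the image of $\fn$ lies inside the maximal ideal $\fm_{A_\natural}$ of the ultrapower. A flat local homomorphism of local rings is faithfully flat, so once flatness is established the conclusion follows.

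The starting point for flatness is that $A\to A_\natural$ itself is faithfully flat, which follows from {\L}o\'s's theorem: finite free resolutions of a finitely generated $A$-module $N$ pass to ultrapowers, giving $\Tor_i^A(N, A_\natural) \cong (\Tor_i^A(N, A))_\natural = 0$ for all $i\geq 1$. Combined with the change of rings identity $\Tor_p^B(X\otimes_A B, A_\natural) = \Tor_p^A(X, A_\natural)$ (valid since $A\to B$ is flat), this yields $\Tor_p^B(X\otimes_A B, A_\natural) = 0$ for every $A$-module $X$ and $p\geq 1$. Applied to $X=k$, and using that $\fn=\fm B$ gives $l = k\otimes_A B$, we obtain $\Tor_p^B(l, A_\natural) = 0$ for all $p\geq 1$.

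To promote this to $\Tor_i^B(N, A_\natural)=0$ for every finitely generated $B$-module $N$, induct on $\dim A$. The base case $\dim A=0$ proceeds as follows: $B$ is Artinian (as $\fn$ inherits nilpotence from $\fm$), every finitely generated $B$-module has finite length, and devissage to the simple quotient $l$ finishes. For the inductive step $\dim A\geq 1$, first split off the $\fn$-torsion submodule $H^0_\fn(N)$, which has finite length and is handled by the base case, reducing to the case $\fn\notin\Ass_B(N)$. Every $\fq\in\Ass_B(N)$ then satisfies $\fq\cap A\subsetneq\fm$ (otherwise $\fq\supset\fm B=\fn$, forcing $\fq=\fn$), so by prime avoidance there is $f\in\fm$ that is a nonzerodivisor on both $A$ and $N$; {\L}o\'s ensures $f$ is a noninvertible nonzerodivisor on $A_\natural$, and flatness of $A\to B$ makes $f$ a nonzerodivisor on $B$ too. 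For a finite free $B$-resolution $F^\bullet\to N$, set $E^\bullet = F^\bullet\otimes_B A_\natural$; by Lemma \ref{lem:modabddImpliesbdd} it suffices that $E^\bullet/fE^\bullet \cong F^\bullet\otimes_B (A/fA)_\natural$ be exact in the required degrees, where we use $A_\natural/fA_\natural = (A/fA)_\natural$ (ultrapower commutes with quotients by finitely generated ideals). By the inductive hypothesis applied to $(A/fA, B/fB)$, the map $B/fB\to (A/fA)_\natural$ is faithfully flat, so the exactness reduces to that of $F^\bullet/fF^\bullet$, which is a finite free resolution of $N/fN$ over $B/fB$ since $f$ is a nonzerodivisor on $N$.

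The main obstacle will be handling $\depth A=0$ within the inductive step: no $f\in\fm$ is then a nonzerodivisor on $A$, hence none on $A_\natural$, so Lemma \ref{lem:modabddImpliesbdd} cannot be applied with any $f\in\fm$. I expect to address this by reducing modulo the finite-length ideal $J = H^0_\fm(A)\subset A$, whose quotient $A/J$ has positive depth and satisfies the theorem hypotheses with $B/JB$; the short exact sequence $0\to JA_\natural\to A_\natural\to (A/J)_\natural\to 0$ combined with the already-established vanishing $\Tor_p^B(l, A_\natural)=0$ (and a filtration of $JA_\natural$ whose successive quotients are quotients of the field ultrapower $k_\natural$, which is an $l$-vector space via $B\to A_\natural$) should then propagate flatness from the positive-depth quotient back to $A_\natural$.
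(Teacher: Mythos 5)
Your main line of argument is essentially the paper's: reduce to flatness, obtain $\Tor^B_p(l,A_\natural)=0$ for $p\geq 1$ by flat change of rings from the flatness of $A\to A_\natural$, handle finite-length modules by d\'evissage, and in the positive-depth case choose $f\in\fm$ a nonzerodivisor on $A$ and on $N$ and apply Lemma \ref{lem:modabddImpliesbdd} to a finite free resolution base-changed to $A_\natural$. The only bookkeeping difference is that you induct on $\dim A$ and invoke the inductive flatness of $B/fB\to (A/fA)_\natural$ to get exactness of $E^\bullet/fE^\bullet$ (using, correctly, that $(A/fA)_\natural=A_\natural/fA_\natural$ is an ultrapower of $A/fA$), whereas the paper fixes $B$ and does Noetherian induction on the module $N$, using that $N/fN$ is a proper quotient; both variants are fine.

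The genuine gap is exactly where you flagged it, and your sketched fix does not work as stated. In the long exact sequence of $\Tor^B_\bullet(N,-)$ attached to $0\to JA_\natural\to A_\natural\to (A/J)_\natural\to 0$, neither outer term vanishes in general: $JA_\natural$ is filtered with graded pieces $k_\natural$, which as a $B$-module is an $l$-vector space, so $\Tor^B_p(N,k_\natural)\cong \Tor^B_p(N,l)\otimes_l k_\natural$; the vanishing you possess, $\Tor^B_p(l,A_\natural)=0$, has $l$ in the \emph{other} slot and gives no control on $\Tor^B_p(N,l)$, which is nonzero already for $N=l$, $p=1$ (namely $\fn/\fn^2$) unless $B$ is a field. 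Similarly $\Tor^B_p(N,(A/J)_\natural)\cong \Tor^B_p(N,B/JB)\otimes_{B/JB}(A/J)_\natural$ need not vanish. So the vanishing of the middle term cannot be read off from that sequence without controlling the connecting maps. The standard repair: when $\dim A\geq 1$, every element of $J=H^0_\fm(A)$ lies in every minimal prime, so $J\subseteq \mathrm{nil}(A)$ and $J$, hence $JB$, is nilpotent; the local criterion of flatness modulo a nilpotent ideal then says $A_\natural$ is $B$-flat if and only if $A_\natural/JA_\natural$ is $B/JB$-flat and $\Tor_1^B(B/JB,A_\natural)=0$, and the latter equals $\Tor_1^A(A/J,A_\natural)=0$ by your own change-of-rings identity together with flatness of $A\to A_\natural$. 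This is in substance how the paper handles the issue: it base-changes to the reduction of $A$ at the very start (via \citestacks{051C}), after which $\dim A>0$ forces $\depth A>0$ and the required nonzerodivisor always exists.
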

\begin{proof}
    Since $A_\natural/\fm A_\natural\neq 0$,
    it suffices to show $B\to A_\natural$ is flat.

    We know $A\to A_\natural$ is flat \cite[Corollary 3.3.3]{SchoutensBOOK}.
    By \citestacks{051C}, we may base change to the reduction of $A$ and assume $A$ reduced.
    If $\dim A=0$, then $A=k$, so $B=l$ is a field. 
    Thus we may assume $\dim A>0,$
    so $\depth A>0.$

    We need to show $N\otimes_B^L A_\natural$ is concentrated in degree $0$
    for all finite $B$-modules $N$.
    If $N=l$,
    then since $\fn=\fm B$,
    $N\otimes_B^L A_\natural=k\otimes_A^L A_\natural$
    is concentrated in degree $0$ as $A\to A_\natural$ is flat.
    Thus $N\otimes_B^L A_\natural$ is concentrated in degree $0$
    for all $N$ of finite length.

    For a general $N$, we may assume, by Noetherian induction,
    that $\overline{N}\otimes_B^L A_\natural$ is concentrated in degree $0$
    for all proper quotients $\overline{N}$ of $N$.
    Since $N[\fn^\infty]$ is of finite length,
    we may thus assume $N[\fn^\infty]=0$,
    \emph{i.e.}, $\depth N>0$.
    
    Since $\depth A>0$ and since $\fm B=\fn$,
    there exists an $f\in \fm$ that is both a nonzerodivisor in $A$ 
    (thus a noninvertible nonzerodivisor in both $B$ and $A_\natural$)
    and a nonzerodivisor in $N$.
    Thus $N\otimes_B^L A_\natural/fA_\natural\cong N/fN \otimes_B^L A_\natural$
    is concentrated in degree $0$.
    Since $N$ and thus 
    $N\otimes_B^L A_\natural$
    can be represented by a complex of finite free modules,
    Lemma \ref{lem:modabddImpliesbdd} implies that $N\otimes_B^L A_\natural$ is concentrated in degree $0$.
\end{proof}

\begin{Lem}\label{lem:ExtVanishFurther}
    Let $(A,\fm,k)$ be a Noetherian local ring,
    $d=\dim A$.
    Let $b\in\bZ$ and let $M\in D^{\leq b}_{Coh}(A)$.
    If $\operatorname{Ext}^m_{A}(k,M)=0$ for all $b+1\leq m\leq b+d+1$,
    then $\operatorname{Ext}^m_{A}(N,M)=0$ for all $m>b$ and all finite $A$-modules $N$.
\end{Lem}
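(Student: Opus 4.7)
My plan is a combined induction: an inner induction on $s = \dim A/\fp$ after a prime-filtration reduction of $N$, and an outer induction on $d = \dim A$ to handle the inner base case. First, a prime filtration of $N$ together with the long exact sequences for $\operatorname{Ext}^\bullet(-, M)$ reduces the problem to proving $\operatorname{Ext}^m_A(A/\fp, M) = 0$ for every prime $\fp$ and every $m > b$.

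For the inner inductive step (handling $\fp \neq \fm$), assume the result for all primes $\fp'$ with $\dim A/\fp' < s := \dim A/\fp$, pick $x \in \fm \setminus \fp$, and consider
\[0 \to A/\fp \xrightarrow{\cdot x} A/\fp \to A/(\fp + xA) \to 0.\]
A prime filtration of $A/(\fp + xA)$ has quotients $A/\fp'$ with $\dim A/\fp' < s$, so $\operatorname{Ext}^m(A/(\fp+xA), M) = 0$ for all $m > b$ by induction. The long exact sequence shows that multiplication by $x$ is an isomorphism on $\operatorname{Ext}^m(A/\fp, M)$ for $m > b$. Since $A/\fp$ is pseudo-coherent, $\operatorname{Ext}^m(A/\fp, M)$ is a finitely generated $A/\fp$-module, so Nakayama's lemma (applied with $\bar x \in \fm/\fp$) forces it to be zero.

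The main obstacle is the inner base case $\fp = \fm$, i.e.\ $N = k$: the hypothesis only provides vanishing in $[b+1, b+d+1]$, not for all $m > b$. I would handle this by outer induction on $d$. For $d = 0$ ($A$ Artinian), a minimal free resolution $F^\bullet \to k$ with syzygies $\Omega_j k$ yields $\operatorname{Ext}^{b+1+j}(k, M) \cong \operatorname{Ext}^{b+1}(\Omega_j k, M)$ for $j \geq 0$ (using $\operatorname{Ext}^n(A, M) = H^n(M) = 0$ for $n > b$), and the right side vanishes by length induction on the finite-length module $\Omega_j k$ from the hypothesis $\operatorname{Ext}^{b+1}(k, M) = 0$. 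For $d \geq 1$, pick a nonzerodivisor $x \in \fm$ with $\dim A/xA = d - 1$ and apply the change-of-rings formula
\[R\operatorname{Hom}_A(k, M) \cong R\operatorname{Hom}_{A/xA}(k, R\operatorname{Hom}_A(A/xA, M)).\]
Set $M' := R\operatorname{Hom}_A(A/xA, M)$; from the triangle $M' \to M \xrightarrow{\cdot x} M$ one checks $M' \in D^{\leq b+1}_{Coh}(A/xA)$. The hypothesis gives $\operatorname{Ext}^m_{A/xA}(k, M') = \operatorname{Ext}^m_A(k, M) = 0$ for $m \in [(b+1)+1, (b+1)+(d-1)+1]$, so by the outer inductive hypothesis $\operatorname{Ext}^m_A(k, M) = 0$ for all $m > b+1$; the hypothesis at $m = b+1$ then closes the gap. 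The genuine technical difficulty here is the case $\depth A = 0$, where no nonzerodivisor exists in $\fm$; this likely requires first passing to $A/\Gamma_\fm(A)$ or using a more refined Koszul-type argument.
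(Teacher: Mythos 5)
Your reduction to $N=A/\fp$, the Nakayama step for $\fp\neq\fm$, and the dimension-shifting at the end are all sound, and your outer induction works whenever you can find the required nonzerodivisor. But the case you flag at the end --- $\dim A\geq 1$ with $\depth A=0$ --- is a genuine gap, and it is precisely where the content of the lemma lies: the hypothesis gives a vanishing window of length $\dim A+1$, which has nothing to do with depth, so the existence of a suitable $x$ cannot be assumed. Moreover, the patches you gesture at do not obviously go through. Passing to $\bar A=A/\Gamma_\fm(A)$ runs into the problem that $M$ is not a complex over $\bar A$, and the change-of-rings complex $R\operatorname{Hom}_A(\bar A,M)$ has uncontrolled upper amplitude: since $\Gamma_\fm(A)$ is a successive extension of copies of $k$, bounding the cohomology of $R\operatorname{Hom}_A(\Gamma_\fm(A),M)$ in degrees $>b+d+1$ amounts to the very vanishing of $\operatorname{Ext}^m_A(k,M)$ you are trying to prove, so the reduction is circular. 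The same circularity blocks using $A/xA$ for a zerodivisor $x$, because $R\operatorname{Hom}_A(A/xA,M)$ is then no longer concentrated in $D^{\leq b+1}$ for any a priori reason.

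The paper's proof is exactly the ``more refined Koszul-type argument'' you anticipate, and it avoids quotient rings altogether: one proves by induction on $h=\dim\operatorname{Supp}N$ the weaker statement that $\operatorname{Ext}^m_A(N,M)=0$ only in the shrinking window $b+1\leq m\leq b+d+1-h$. The base case $h=0$ is your finite-length argument; for $h>0$ one picks $f\in\fm$ with $\dim\bigl(V(fA)\cap\operatorname{Supp}N\bigr)<h$ and works with $C=\operatorname{Cone}(N\xrightarrow{\times f}N)$, whose two cohomology modules $N/fN$ and $(0:_N f)$ both have support of dimension $<h$; the induction hypothesis kills $\operatorname{Ext}^m_A(C,M)$ in the range $[b+2,\,b+d+2-h]$, so multiplication by $f$ is bijective on $\operatorname{Ext}^m_A(N,M)$ in the window, and Nakayama (these Ext modules are finite) gives vanishing. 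No nonzerodivisor is ever needed, because the cone replaces the short exact sequence. Since $h\leq d$, the window always contains $m=b+1$, so $\operatorname{Ext}^{b+1}_A(N,M)=0$ for every finite $N$, and then your dimension-shifting step (using $M\in D^{\leq b}$) upgrades this to all $m>b$. If you replace your outer induction on $d$ by this support-dimension induction with a shrinking window, the rest of your write-up survives essentially unchanged.
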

\begin{proof}
    We show by induction on $h=\dim\operatorname{Supp} N$ that $\operatorname{Ext}^m_{A}(N,M)=0$ for all $b+1\leq m\leq b+d+1-h$,
    for all finite $A$-modules $N$.
    This implies the result, since then $\operatorname{Ext}^{b+1}_{A}(N,M)=0$ for all finite $A$-modules $N$, so dimension shifting and the fact $M\in D^{\leq b}(A)$ shows $\operatorname{Ext}^{m}_{A}(N,M)=0$ for all finite $A$-modules $N$ and all $m>b$.
    
    If $h=0$, then $N$ has finite length, thus is a successive extension of $k$ and the vanishing holds by the assumption.
    Assume $h>0$, so there exists $f\in \fm$ such that $\dim\left( V(fA)\cap \operatorname{Supp} N\right)<h$.
    Let $C=\operatorname{Cone}(N\xrightarrow{\times f}N)$,
    so we have a distinguished triangle
    $N\xrightarrow{\times f}N\to C\to +1$.
    We note that $C\in D^{[-1,0]}_{Coh}(A)$ and that the dimension of the support of the cohomology modules of $C$ is less than $h$.
    Thus the induction hypothesis tells us 
    $\operatorname{Ext}^m_{A}(C,M)=0$ for all $b+2\leq m\leq b+d+2-h$.
    The distinguished triangle above and Nakayama's Lemma tells us $\operatorname{Ext}^m_{A}(N,M)=0$ for all $b+1\leq m\leq b+d+1-h$, as desired.
\end{proof}

\begin{Cor}\label{cor:CriterionForDual}
    Let $(A,\fm,k)$ be a Noetherian local ring of dimension $d$,
    and let $a\in \bZ$.
    Let $M\in D^{[a,0]}_{Coh}(A)$.
    Assume that 
    \begin{align*}
    \tau_{\leq d+1}R\operatorname{Hom}_{A}(k,M)&\cong k.\\
    \tau_{\geq a}\tau_{\leq -a}R\operatorname{Hom}_{A}(M,M)&=A.
    \end{align*}
    Then $M$ is a normalized dualizing complex.
\end{Cor}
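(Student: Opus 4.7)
The plan is to verify that $M$ satisfies the defining properties of a normalized dualizing complex: membership in $D^b_{Coh}(A)$ (given), finite injective dimension, that the natural map $A\to R\operatorname{Hom}_A(M,M)$ is a quasi-isomorphism, and normalization.

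First, the hypothesis $\tau_{\leq d+1}R\operatorname{Hom}_A(k,M)\cong k$ yields $\operatorname{Ext}^0_A(k,M)=k$ and $\operatorname{Ext}^i_A(k,M)=0$ for $1\leq i\leq d+1$. Applying Lemma \ref{lem:ExtVanishFurther} with $b=0$ upgrades this to $\operatorname{Ext}^i_A(N,M)=0$ for all $i>0$ and all finite $A$-modules $N$. Combined with $M\in D^b_{Coh}(A)$, this uniform Ext-bound forces $M$ to have finite injective dimension (by Bass's criterion).

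Next, I would sandwich $R\operatorname{Hom}_A(M,M)$ into $D^{[a,-a]}$. The containment $R\operatorname{Hom}_A(M,M)\in D^{\geq a}$ is a direct computation using a finite-free resolution of $M$ in degrees $\leq 0$ together with a representative of $M$ in degrees $[a,0]$. The containment $R\operatorname{Hom}_A(M,M)\in D^{\leq -a}$ follows by d\'evissage in the first variable along the canonical truncation of $M$: each graded piece $H^i(M)[-i]$, $i\in[a,0]$, satisfies $R\operatorname{Hom}_A(H^i(M),M)\in D^{\leq 0}$ by the Ext-vanishing just obtained, so $R\operatorname{Hom}_A(H^i(M)[-i],M)\in D^{\leq -i}\subseteq D^{\leq -a}$. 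Since $R\operatorname{Hom}_A(M,M)\in D^{[a,-a]}$, the truncation $\tau_{\geq a}\tau_{\leq -a}$ is the identity on it, so the second hypothesis states precisely that the natural map $A\to R\operatorname{Hom}_A(M,M)$ is a quasi-isomorphism.

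At this point $M$ is a dualizing complex. For normalization, I invoke uniqueness of dualizing complexes over a Noetherian local ring up to shift: $M\cong\omega^\bullet[n]$ for some normalized dualizing complex $\omega^\bullet$ and some $n\in\bZ$. Then $R\operatorname{Hom}_A(k,M)\cong R\operatorname{Hom}_A(k,\omega^\bullet)[n]=k[n]$ (using that $R\operatorname{Hom}_A(k,\omega^\bullet)=k$ for a normalized dualizing complex, a consequence of local duality), concentrated in degree $-n$. Comparing with the hypothesis $\tau_{\leq d+1}R\operatorname{Hom}_A(k,M)\cong k$ (in degree $0$) forces $n=0$, so $M$ is normalized. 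I expect the main subtlety to be the upper-bound d\'evissage for $R\operatorname{Hom}_A(M,M)$; the crux is that Lemma \ref{lem:ExtVanishFurther} is exactly what upgrades the hypothesis on $R\operatorname{Hom}_A(k,M)$ into Ext-vanishing against all finite modules, which is what makes the d\'evissage go through.
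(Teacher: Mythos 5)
Your proposal is correct and follows essentially the same route as the paper: Lemma \ref{lem:ExtVanishFurther} upgrades the truncated condition on $R\operatorname{Hom}_A(k,M)$ to $\operatorname{Ext}^{>0}_A(N,M)=0$ for all finite $N$, which bounds $R\operatorname{Hom}_A(M,M)$ in $D^{[a,-a]}$ so that the truncations in the second hypothesis are the identity. Your d\'evissage for the upper bound and the uniqueness-up-to-shift argument for normalization are just slightly more explicit versions of the paper's appeal to injective amplitude $[a,0]$ and to $R\operatorname{Hom}_A(k,M)\cong k$ in degree $0$.
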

\begin{proof}
    By Lemma \ref{lem:ExtVanishFurther},
    $\operatorname{Ext}^m_A(N,M)=0$ for all finite $A$-modules $N$ and all $m>0$.
    Thus $M$ has injective amplitude $[a,0]$ and $R\operatorname{Hom}_{A}(k,M)\cong k$.
    Since $M\in D^{[a,0]}(A)$,
    we see $R\operatorname{Hom}_{A}(M,M)\in D^{[a,-a]}(A)$.
    Thus $R\operatorname{Hom}_{A}(M,M)=A$,
    so $M$ is a normalized dualizing complex.
\end{proof}

\begin{Thm}\label{thm:HenselHasDual}
    Let $(A,I)$ be a Henselian pair where $(A,\fm,k)$ is a Noetherian local ring.
    Assume that
    \begin{enumerate}[label=$(\roman*)$]
        \item\label{hensdual_reg} The $I$-adic completion map $A\to B:=\lim_n A/I^n$ is regular.
        \item $B$ admits a dualizing complex. 
    \end{enumerate}
    Then $A$ admits a dualizing complex.
\end{Thm}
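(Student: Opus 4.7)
The plan is to construct a candidate dualizing complex $M$ on $A$ by pulling back $K_B$ along a section provided by the Henselian property of $(A,I)$, and then to verify that $M$ satisfies the criterion of Corollary~\ref{cor:CriterionForDual}, using an ultrapower of $A$ and Theorem~\ref{thm:MaptoUltraFlat} to transfer the relevant conditions from $B$ to $A$.

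First I would apply Popescu's theorem to the regular map $A\to B$ to write $B=\colim_\lambda B_\lambda$ as a filtered colimit of smooth $A$-algebras of finite type, refined so that each $B_\lambda\to B$ is flat. By Lemma~\ref{lem:DescendDbCoh}, the normalized dualizing complex $K_B\in D^{[-d,0]}_{Coh}(B)$, where $d=\dim A$, descends to some $K_\lambda\in D^{[-d,0]}_{Coh}(B_\lambda)$ with $K_\lambda\otimes_{B_\lambda}^L B\cong K_B$. Since $B_\lambda$ is smooth over $A$, the composition $B_\lambda\to B\to B/IB=A/I$ is an $A/I$-algebra section of $A/I\to B_\lambda/IB_\lambda$, so by the Henselian property of $(A,I)$ it lifts to an $A$-algebra map $\sigma:B_\lambda\to A$. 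Using Artin approximation (valid because $A\to B$ is regular and $(A,I)$ is Henselian), one can refine to a sequence of sections $\sigma_n:B_\lambda\to A$ with $\sigma_n$ agreeing with the original $B_\lambda\to B$ modulo $I^n$ after composing with $A\to B$. The candidate is $M:=K_\lambda\otimes^L_{B_\lambda,\sigma}A$.

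For the verification I would fix a non-principal ultrapower $A_\natural=A^\omega/\mathcal{U}$. The sequence $(\sigma_n)$ defines an $A$-algebra map $B_\lambda\to A_\natural$, and a coherence argument across the Popescu system (choosing $\sigma_n$ compatibly as $\lambda$ varies) produces an $A$-algebra map $\Phi:B\to A_\natural$ under which the composition $B_\lambda\to A\to B\xrightarrow{\Phi}A_\natural$ agrees with $B_\lambda\to B\xrightarrow{\Phi}A_\natural$. Theorem~\ref{thm:MaptoUltraFlat} then makes $\Phi$ faithfully flat, hence $A\to A_\natural$ is faithfully flat as well. This yields an identification $M\otimes_A^L A_\natural\cong K_B\otimes_B^L A_\natural$, and the two conditions of Corollary~\ref{cor:CriterionForDual}, namely $\tau_{\leq d+1}R\operatorname{Hom}_A(k,M)\cong k$ and $\tau_{\geq -d}\tau_{\leq d}R\operatorname{Hom}_A(M,M)=A$, can then be checked after the faithfully flat base change $A\to A_\natural$, where they follow from the corresponding statements for $K_B$ on $B$ transferred along $B\to A_\natural$.

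The main obstacle is the derived-level identification $M\otimes_A^L A_\natural\cong K_B\otimes_B^L A_\natural$: the composition $B_\lambda\xrightarrow{\sigma}A\to B$ agrees with the original inclusion $B_\lambda\to B$ only modulo $I$, not on the nose, and the ultrapower is designed precisely to absorb this discrepancy but only after careful coordination of the approximations across the Popescu diagram. Lifting this ring-level coincidence in $A_\natural$ to an honest isomorphism of complexes, rather than merely a statement about underlying rings, is the delicate step; I expect it to require a first-order argument of the kind used in Lemma~\ref{lem:modabddImpliesbdd} and in the proof of Theorem~\ref{thm:MaptoUltraFlat}.
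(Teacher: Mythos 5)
The central unproven step is exactly the one you flag at the end, and as you have set things up it is not merely delicate but structurally obstructed. Your candidate $M=K_\lambda\otimes^L_{B_\lambda,\sigma}A$ is defined using one fixed section $\sigma$, so $M\otimes_A^L A_\natural$ is the base change of $K_\lambda$ along the ``constant'' map $B_\lambda\xrightarrow{\sigma}A\to A_\natural$. On the other hand, $K_B\otimes_B^L A_\natural$ restricts on $B_\lambda$ to the base change along $B_\lambda\to B\xrightarrow{\Phi}A_\natural$, and $\Phi$ is built (as in \cite[Theorem 7.1.1]{SchoutensBOOK}) from the \emph{varying} approximating sections $\sigma_n$; these are two genuinely different ring maps $B_\lambda\rightrightarrows A_\natural$, agreeing only modulo powers of $I$, and derived base change along maps that agree modulo an ideal need not give isomorphic complexes. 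No {\L}o\'s-type argument compares them, because the desired statement is an isomorphism between two complexes over the ultrapower, not a coordinate-wise first-order condition on a single finitely presented complex; and the faithful flatness of $\Phi$ from Theorem~\ref{thm:MaptoUltraFlat} is irrelevant to this comparison. Consequently the descent step never starts: what you can verify over $A_\natural$ are the conditions of Corollary~\ref{cor:CriterionForDual} for $F=K_B\otimes_B^L A_\natural$, but you have no isomorphism $F\cong M\otimes_A^L A_\natural$ to bring them down to $M$. A secondary gap: you cannot in general ``refine'' the Popescu system so that $B_\lambda\to B$ is flat (maps from smooth $A$-algebras to $B$ are rarely flat), and Lemma~\ref{lem:DescendDbCoh} needs flat maps to the colimit, so even the construction of $K_\lambda$ with $K_\lambda\otimes^L_{B_\lambda}B\cong K_B$ is not justified as stated.

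The paper's proof is arranged precisely to avoid fixing a section and a candidate on $A$ at the outset. The sections of the smooth algebras are used only to produce \emph{some} $A$-algebra map $B\to A_\natural$, which Theorem~\ref{thm:MaptoUltraFlat} makes flat; then $F=K_B\otimes_B^L A_\natural$ is descended along the flat localization $A^X\to A_\natural$ (this is where Lemma~\ref{lem:DescendDbCoh} is legitimately applied), and the candidate dualizing complex on $A$ is the projection of the descended complex over $A^X$ to a single coordinate. The criterion of Corollary~\ref{cor:CriterionForDual} is transferred from $A_\natural$ to almost all coordinates by writing the relevant truncated cohomology of Hom complexes of finitely presented modules as first-order statements in matrix coefficients. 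In effect the paper shows that \emph{some} member of the family of approximations works, without ever comparing a chosen section against the map to $B$; to repair your argument you would have to let the section vary with the coordinate and run exactly this kind of coordinate-wise transfer, at which point you would be reproducing the paper's proof rather than giving a different one.
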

Note that \ref{hensdual_reg} is always true for quasi-excellent $A$, see \citestacks{0AH2}.
\begin{proof}
  %
%

    By Popescu's theorem \citestacks{07GC}, $B$ is the colimit of a direct system of smooth $A$-algebras $A_i$.
    The composition $A_i\to B\to B/IB\cong A/I$ gives an $A$-algebra map $A_i\to A/I$,
    thus $A_i$ admits a section $A_i\to A$ since $(A,I)$ is Henselian, see \citestacks{07M7}.
    Thus \cite[Theorem 7.1.1]{SchoutensBOOK} applies (see also \cite[Lemma 3.4]{Lyu-Splinter}),
    so there exists an $A$-algebra map $B\to A_\natural$ where $A_\natural=A^X/\cU=S_\cU^{-1}A^X$ is an ultrapower of $A$.
    Here $X$ is a set, $\cU$ is an ultrafilter on $X$, 
    and $S_\cU=\{e_U\mid U\in \cU\}$,
    where $e_U$ is defined by $(e_U)_x=1$ when $x\in U$ and $(e_U)_x=0$ when $x\not\in U$.
    This map is flat by Theorem \ref{thm:MaptoUltraFlat}.

    Let $K\in D^b_{Coh}(B)$ be a normalized dualizing complex.
    Then $F:=K\otimes_B^L A_\natural\in D(A_\natural)$ is pseudo-coherent and bounded.
    Writing $k_\natural=k\otimes_A^L A_\natural=k\otimes_B^L A_\natural$,
    we have
    \begin{align*}
    R\operatorname{Hom}_{A_\natural}(k_\natural,F)&=R\operatorname{Hom}_{B}(k,K)\otimes_B^L A_\natural\cong k_\natural\\
    R\operatorname{Hom}_{A_\natural}(F,F)&=R\operatorname{Hom}_{B}(K,K)\otimes_B^L A_\natural= A_\natural
    \end{align*}
    by \citestacks{0A6A} and by the fact $K$ is a normalized dualizing complex.

    Let $d=\dim A$. 
    Then $K\in D^{[-d, 0]}(B)$ and thus $F\in D^{[-d, 0]}(A_\natural)$.
    By Lemma \ref{lem:DescendDbCoh},
    there exists an $e\in S_\cU$ and an $M\in D((A^X)_e)$ represented by a complex $M^\bullet$ of finitely presented $(A^X)_e$-modules
    such that $M\otimes^L_{(A^X)_e}A_\natural\cong F$,
    that $M^m=0$ for all $m>0$ and $m<-d$,
    and that $M^m$ is free for all $-d<m\leq 0$.
    Note that if $e=e_U$, then $(A^X)_e=A^U$,
    thus after replacing $X$ by a subset in $\cU$ we may assume $e=1$.

    We note that another application of Lemma \ref{lem:DescendDbCoh} with $a=-2d$ shows that, after replacing $X$ by a subset,
    $M$ is also represented by a complex of free $A$-modules $P^\bullet$ such that $P^m=0$ for all $m>0$ and that $P^m$ is finite for all $m\geq -2d-1$.
    Thus $R\operatorname{Hom}_{A^X}(M,M)$ is represented by the Hom complex
    $E^\bullet:=\operatorname{Hom}^\bullet_{A^X}(P^\bullet,M^\bullet)$,
    and $E^m$ is finitely presented for all $m\leq d+1$. 
Also note that $R\operatorname{Hom}_{A^X}(k^X,M)$ is represented by a complex of finitely presented modules in degrees $\geq -d$,
since $k^X=A^X\otimes^L_A k$ is represented by a complex of finite free modules in degrees $\leq 0$.

    Let $T^{-1}\to T^0\to T$ be a three-term complex of finitely presented
    $A^X$-modules. 
    Fix presentations $P^{i,-1}\to P^{i,0}\to T^{i}\to 0\ (i=-1,0,1)$, where $P^{i,j}$ are finite free, and maps $P^{-1,0}\to P^{0,0}\to P^{1,0}$ lifting $T^{-1}\to T^0\to T^1$.
    We do not require the composition $P^{-1,0}\to P^{1,0}$ to be zero. 
    Fix bases of $P^{i,j}$,
    so the maps between $P^{i,j}$  are represented by matrices.
    The statement that the cohomology $\ker(T^0\to T^1)/\operatorname{im}(T^{-1}\to T^0)$ is $0$ (resp. $A^X,k^X$)
    is a first-order statement of the coefficients of the matrices.
    This observation and the observation on Hom complexes above tells us that after replacing $X$ by a subset in $\cU$,
    we have
    \begin{align*}
    \tau_{\leq d+1}R\operatorname{Hom}_{A^X}(k^X,M)&\cong k^X,\\
    \tau_{\geq -d}\tau_{\leq d}R\operatorname{Hom}_{A^X}(M,M)&=A^X.
    \end{align*}
    Apply the projection $A^X\to A$ to some coordinate, we get an object $M_1\in D^{[-d,0]}_{Coh}(A)$
    such that
    \begin{align*}
    \tau_{\leq d+1}R\operatorname{Hom}_{A}(k,M_1)&\cong k.\\
    \tau_{\geq -d}\tau_{\leq d}R\operatorname{Hom}_{A}(M_1,M_1)&=A.
    \end{align*}
    Then $M_1$ is a dualizing complex for $A$ by Corollary \ref{cor:CriterionForDual}.
\end{proof}

An \emph{elementary \'etale-local ring map} is a local map $(R,\fm,k)\to (S,\fn,l)$ of local rings such that
$S$ is the localization of an \'etale $R$-algebra at a prime ideal and that $l=k$.

\begin{Cor}\label{cor:DualAfterEtloc}
    Let $A$ be a Noethrian local G-ring.
    Then there exists an elementary \'etale-local ring map $A\to A'$ such that $A'$ admits a dualizing complex.
\end{Cor}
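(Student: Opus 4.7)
The plan is to produce a dualizing complex on the Henselization $A^h$ of $A$ via Theorem \ref{thm:HenselHasDual}, and then descend it to an elementary \'etale-local ring map $A \to A'$ sitting inside $A^h$.

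For the first step, I would invoke the standard permanence property that the Henselization $A^h$ of a G-ring $A$ is again a G-ring, so the map $A^h \to \widehat{A^h} = \hat{A}$ is regular. Because $\hat{A}$ is a Noetherian complete local ring, it admits a dualizing complex (classically, via Cohen's structure theorem). Applying Theorem \ref{thm:HenselHasDual} to the Henselian pair $(A^h, \fm A^h)$ then yields a dualizing complex $K^h \in D^b_{Coh}(A^h)$, which after a shift I may assume is normalized, so $K^h \in D^{[-d,0]}_{Coh}(A^h)$ where $d = \dim A$.

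Next I would write $A^h = \colim_\lambda A_\lambda$ as the filtered colimit of all elementary \'etale-local ring maps $A \to A_\lambda$; the transition maps and the maps $A_\lambda \to A^h$ are all flat. By Lemma \ref{lem:DescendDbCoh} with $(a,b) = (-d, 0)$, for some $\lambda$ there is $K_\lambda \in D^{[-d,0]}_{Coh}(A_\lambda)$ with $K_\lambda \otimes^L_{A_\lambda} A^h \cong K^h$. To upgrade $K_\lambda$ to a dualizing complex I would apply Corollary \ref{cor:CriterionForDual}, which reduces the question to verifying
\[\tau_{\leq d+1} R\operatorname{Hom}_{A_\lambda}(k, K_\lambda) \cong k \quad\text{and}\quad \tau_{\geq -d}\tau_{\leq d} R\operatorname{Hom}_{A_\lambda}(K_\lambda, K_\lambda) \cong A_\lambda.\]
Both identities hold after applying $-\otimes^L_{A_\lambda} A^h$ since $K^h$ is normalized dualizing, using \citestacks{0A6A} to commute $R\operatorname{Hom}$ of bounded pseudo-coherent complexes with flat base change, together with $k \otimes^L_{A_\lambda} A^h = k$. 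The relevant complexes being bounded pseudo-coherent, a filtered-colimit analogue of Lemma \ref{lem:0A6Aforiso} (proved identically via \citestacks{0A6A} applied to the flat maps $A_\mu \to A^h$) lets these isomorphisms descend to some $A_\mu$ with $\mu \geq \lambda$; then $A' := A_\mu$ is the required map.

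The main technical input I expect to need is the permanence of the G-ring property under Henselization; beyond that, the argument is routine spreading-out using the machinery already assembled in the paper.
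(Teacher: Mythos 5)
Your proposal is correct and takes essentially the same route as the paper: Henselize (a G-ring's Henselization is again a G-ring, so Theorem \ref{thm:HenselHasDual} applied to $(A^h,\fm A^h)$ yields a dualizing complex on $A^h$), then descend it along the filtered colimit of elementary \'etale-local maps via Lemma \ref{lem:DescendDbCoh}. The only divergence is the final verification: the paper simply cites flat descent of dualizing complexes (\citestacks{0E4A}) for the faithfully flat local map $A'\to A^h$, so your extra passage to a further stage $A_\mu$ via Corollary \ref{cor:CriterionForDual} and a colimit analogue of Lemma \ref{lem:0A6Aforiso}, while workable, is not needed --- faithful flatness already forces the two conditions at the first stage where the complex is defined.
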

\begin{proof}
    Let $A^h$ be the Henselization of $A$, so $A^h$ is a Henselian G-ring \citestacks{07QR}.
    Thus $A^h$ admits a dualizing complex $K\in D(A^h)$ by Theorem \ref{thm:HenselHasDual} (and \citestacks{0BFR} or our Theorem \ref{thm:GLOBALdeform0}).
%
    Since $A\to A^h$ is the filtered colimit of elementary \'etale-local ring maps $A\to A'$
    \citestacks{04GN}
    and since each $A'\to A^h$ is flat
    (cf. \citestacks{08HS}),
    by Lemma \ref{lem:DescendDbCoh}
    there exists an $A'$ 
    and a $K'\in D^b_{Coh}(A')$ 
    such that $K'\otimes^L_{A'}A^h=K$.
    Since $K$ is a dualizing complex for $A^h$, 
    by flatness again we see $K'$ is a dualizing complex for $A'$, see \citestacks{0E4A}.
\end{proof}

\section{Pseudo-dualizing complexes}\label{sec:PesudoDual}

\begin{Def}\label{def:weakDual}
    Let $A$ be a Noetherian ring.
    Let $K\in D(A)$.
    We say $K$ is a \emph{pseudo-dualizing complex}
    if $K\in D^b_{Coh}(A)$ and $K_\fp$ is a dualizing complex for $A_\fp$ for all $\fp\in\Spec(A)$.
    Thus $A$ is Gorenstein if and only if $A\in D(A)$ is a pseudo-dualizing complex.

    Let $X$ be a Noetherian scheme.
    $K\in D(X)$ is a \emph{pseudo-dualizing complex}
    if $K\in D^b_{Coh}(X)$ and $K_x$ is a dualizing complex for $\cO_{X,x}$ for all $x\in X$.
\end{Def}

It is clear that $K\in D^b_{Coh}(A)$ is a pseudo-dualizing complex if and only if $K_\fm$ is a dualizing complex for $A_\fm$ for all maximal ideals $\fm$ of $A$.

\begin{Rem}\label{rem:DualRemAfterDef}
    A pseudo-dualizing complex is a dualizing complex if and only if $A$ (resp. $X$) is finite-dimensional.
    See \cite[Chapter V, Proposition 8.2]{Residues-Duality}.

    The existence of a pseudo-dualizing complex implies being catenary, in fact implies the existence of a codimension function, see \cite[p. 287]{Residues-Duality}.
\end{Rem}

\begin{Lem}\label{lem:PseuDualBdd}
    Let $A$ be a Noetherian ring.
    Assume that 
    for every $\fp\in\Spec(A)$,
    $\Spec(A/\fp)$ contains a nonempty Cohen-Macaulay open subscheme.
    Let $K\in D(A)$ be such that $K_\fp$ is a dualizing complex for all $\fp\in \Spec(A)$.
    Then $K$ is bounded. 
\end{Lem}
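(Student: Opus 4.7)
The plan is to proceed by Noetherian induction on closed subsets $Z \subseteq \Spec(A)$, proving that for each nonempty closed $Z$ there exist integers $a_Z \leq b_Z$ such that $H^j(K_\fq) = 0$ for all $\fq \in Z$ and all $j \notin [a_Z, b_Z]$. Applied to $Z = \Spec(A)$ this yields the lemma. When $Z$ is reducible, I would decompose it into irreducible components (each a proper closed subset of $Z$), apply the induction hypothesis to each, and take the maximum of the resulting bounds.

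Thus I reduce to the case $Z = V(\fp)$ irreducible with generic point $\fp$. By hypothesis, pick $f \notin \fp$ with $(A/\fp)_f$ Cohen--Macaulay. The closed subset $V(\fp + fA)$ is properly contained in $V(\fp)$, so the induction hypothesis handles it; it remains to produce a uniform bound on $H^j(K_\fq)$ for $\fq$ in the Cohen--Macaulay open $U := V(\fp) \cap D(f)$.

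For $\fq \in U$, the complex $M_\fq := R\operatorname{Hom}_{A_\fq}(A_\fq/\fp A_\fq, K_\fq)$ is a dualizing complex for the Cohen--Macaulay local ring $(A/\fp)_\fq$, hence concentrated in a single cohomological degree. Localizing further at $\fp$ yields $(M_\fq)_\fp \cong R\operatorname{Hom}_{A_\fp}(\kappa(\fp), K_\fp)$, a nonzero $\kappa(\fp)$-vector space in a fixed degree depending only on $K_\fp$; so $M_\fq$ is concentrated in a fixed degree $-c$, independent of $\fq \in U$. The distinguished triangle $M_\fq \to K_\fq \to R\operatorname{Hom}_{A_\fq}(\fp A_\fq, K_\fq) \xrightarrow{+1}$ then reduces the problem to uniformly bounding the cohomology of $R\operatorname{Hom}_{A_\fq}(\fp A_\fq, K_\fq)$ as $\fq$ varies in $U$.

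I would handle the remaining term by choosing a finite $A$-module filtration $0 = N_0 \subset \cdots \subset N_s = \fp$ whose subquotients are isomorphic to $A/\fq_k$ for primes $\fq_k \in \Spec(A)$ (\citestacks{00L0}). Localizing and applying $R\operatorname{Hom}(-, K_\fq)$ produces iterated distinguished triangles whose leaves $R\operatorname{Hom}_{A_\fq}(A_\fq/\fq_k A_\fq, K_\fq)$ are dualizing complexes for $(A/\fq_k)_\fq$; since each $\fq_k$ also satisfies the Cohen--Macaulay open hypothesis, the same argument bounds each leaf. The main obstacle is ensuring the recursion terminates: every $\fq_k$ arising must yield a strictly smaller closed subset so that the Noetherian induction applies. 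For reduced $A$ the $\fq_k$ lie in irreducible components of $\Spec(A)$ different from $V(\fp)$, which are already covered by the induction hypothesis; in the non-reduced case one handles this by a nested induction on the nilpotency index of $\sqrt{0} \subset A$, or by first reducing modulo the nilradical via the functor $R\operatorname{Hom}_A(A_{\mathrm{red}}, -)$, which preserves the pseudo-dualizing hypotheses and from whose boundedness one recovers boundedness of $K$.
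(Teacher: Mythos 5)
The first half of your argument is sound: on the Cohen--Macaulay open $U$ the complex $M_\fq=R\operatorname{Hom}_{A_\fq}((A/\fp)_\fq,K_\fq)$ is indeed concentrated in a single degree, and localizing at the generic point $\fp$ pins that degree uniformly. The gap is in the treatment of the third term $R\operatorname{Hom}_{A_\fq}(\fp A_\fq,K_\fq)$. The primes $\fq_k$ occurring in a filtration of the $A$-module $\fp$ by prime cyclic subquotients are in no sense ``smaller'' for your Noetherian induction: already for $A$ a domain, any nonzero principal subideal of $\fp$ is isomorphic to $A$, so $\fq_k=(0)$ occurs and $V(\fq_k)=\Spec(A)$ is the whole space; in $k[x,y]$ with $\fp=(x,y)$ the filtration $0\subset (x)\subset (x,y)$ produces the primes $(0)$ and $(x)$, neither of which is contained in $V(\fp)$ nor lies in an irreducible component other than the one containing $V(\fp)$. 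So your claim that for reduced $A$ the $\fq_k$ are covered by the induction hypothesis is false; moreover the induction hypothesis as you formulated it is a statement about $K$ on proper closed subsets of the current $Z$, whereas the leaves are dualizing complexes of the quotients $(A/\fq_k)_\fq$, about which it says nothing. The recursion therefore does not terminate. The non-reduced reduction is also unjustified as stated: deducing boundedness of $K$ from boundedness of $R\operatorname{Hom}_A(A_{\mathrm{red}},K)$ is itself a reflection statement of the kind proved in Lemma \ref{lem:nilReflect}, whose proof in the paper uses the present lemma, so one cannot simply invoke it here.

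What is really needed, and what your triangle merely relocates into the term $R\operatorname{Hom}(\fp A_\fq,K_\fq)$, is a bound on the cohomological spread of $K_\fq$ that is uniform in $\fq$: since $K_\fq$ is dualizing, this spread equals $\HT\fq-\depth A_\fq$. The paper's proof obtains this directly: under the hypothesis that every $\Spec(A/\fp)$ has a nonempty Cohen--Macaulay open, the function $\fq\mapsto \HT\fq-\depth A_\fq$ is constructible on $\Spec(A)$ by \cite[Proposition 6.10.6]{EGA4_2}, hence bounded by some $N$, and the finitely many minimal primes (where $K$ is concentrated in a single degree) anchor the actual range of degrees, giving boundedness of $K$. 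Your anchoring step plays the role of the minimal-prime anchoring, but to close your argument you would in effect have to reprove this constructibility of the coprofundity function, which the dévissage of $\fp$ does not accomplish.
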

\begin{proof}
    For all $\fp\in\Spec(A)$, we have
    \[
    \HT\fp-\depth A_\fp=\max \{b-a\mid a\leq b, H^a(K_\fp)\neq 0, H^b(K_\fp)\neq 0\}.   
    \]
By \cite[Proposition 6.10.6]{EGA4_2}, the function on the left hand side
    is constructible on $\Spec(A)$.
    Let $N$ be the maximum of this function,
    and for each minimal prime $\fq$ of $A$ let $c_\fq$ be the unique integer such that $H^{c_\fq}(K_\fq)\neq 0$.
    Then it is clear that $K$ is concentrated in degrees 
    $[-N+\min_\fq c_\fq,\max_\fq c_\fq+N]$.
    (In fact, $[\min_\fq c_\fq,\max_\fq c_\fq+N]$.) 
\end{proof}

\begin{Lem}\label{lem:FiniteUpperShriek}
    Let $f:A\to B$ be a finite map of Noetherian rings.
   Let $K\in D(A)$ be a pseudo-dualizing complex.
    Then $L:=R\operatorname{Hom}_A(B,K)\in D(B)$ is a pseudo-dualizing complex.
\end{Lem}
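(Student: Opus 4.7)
The plan is to verify the two conditions of Definition \ref{def:weakDual} for $L$ over $B$: that $L \in D^b_{Coh}(B)$, and that $L_\fq$ is a dualizing complex for $B_\fq$ for every $\fq \in \Spec B$.

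I would dispatch the local dualizing property first. Fix $\fq \in \Spec B$ and set $\fp = f^{-1}(\fq) \in \Spec A$. Since $B$ is finite over the Noetherian ring $A$, $B$ is finitely presented, hence pseudo-coherent, as an $A$-module. Combined with $K \in D^b_{Coh}(A)$, the flat base change formula \citestacks{0A6A} applied along $A \to A_\fp$ gives
\[L \otimes_A A_\fp \cong R\operatorname{Hom}_{A_\fp}(B_\fp, K_\fp),\]
and further localizing at $\fq$ (since $B_\fp \to B_\fq$ is flat) yields
\[L_\fq \cong R\operatorname{Hom}_{A_\fp}(B_\fp, K_\fp) \otimes_{B_\fp} B_\fq.\]
By hypothesis $K_\fp$ is dualizing for $A_\fp$, and $B_\fp$ is finite over $A_\fp$; the classical finite-map lemma for dualizing complexes (\citestacks{0AX1}) then shows $R\operatorname{Hom}_{A_\fp}(B_\fp, K_\fp)$ is dualizing for $B_\fp$, and localization of a dualizing complex at a prime is again dualizing (\citestacks{0A7U}), so $L_\fq$ is dualizing for $B_\fq$.

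Next I would check that $L \in D^b_{Coh}(B)$. Coherence of each $H^i(L) = \operatorname{Ext}^i_A(B, K)$ is routine: $B$ is finitely presented over the Noetherian ring $A$ and $K \in D^b_{Coh}(A)$ force each Ext module to be finite over $A$, hence over $B$. For boundedness, fix $[a, b]$ with $K \in D^{[a, b]}(A)$; the inclusion $L \in D^{\geq a}(B)$ is immediate. For the upper bound, at each $\fp \in \Spec A$ the localization $K_\fp \in D^{\leq b}$ is dualizing on the local ring $A_\fp$, hence is a shift by some integer $m_\fp \leq b$ of a normalized dualizing complex whose injective amplitude lies in $[-\dim A_\fp, 0]$. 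Therefore $K_\fp$ has injective amplitude contained in $(-\infty, b]$, giving $\operatorname{Ext}^i_{A_\fp}(B_\fp, K_\fp) = 0$ for all $i > b$. Flat base change then yields $H^i(L)_\fp = 0$ for every $\fp$ and every $i > b$, so $H^i(L) = 0$ for $i > b$ and $L \in D^{[a, b]}(B)$.

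The two main ingredients are the flat base change formula for $R\operatorname{Hom}$ and the classical finite-map lemma for dualizing complexes. I expect the argument to be short, as it amounts to correctly assembling these standard facts. The only point requiring slight care is the global boundedness of $L$, since by Definition \ref{def:weakDual} $K$ is known to be dualizing only at each local ring $A_\fp$ and $A$ may be infinite-dimensional; this is handled by the local injective-amplitude bound sketched above, which turns the global upper bound on $K$ directly into a global upper bound on $L$.
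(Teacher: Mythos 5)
The localization step and the coherence of the cohomology modules of $L$ are fine and agree with what the paper does; the problem is the boundedness-above argument, which is exactly the point you flag as the crux. The claim that $K_\fp\in D^{\leq b}$ forces $K_\fp$ to be a shift by some $m_\fp\leq b$ of a normalized dualizing complex, hence to have injective amplitude in $(-\infty,b]$, is false: for a normalized dualizing complex over a local ring the top nonvanishing cohomology sits in degree $-\depth A_\fp$ (not $0$), while the top of the injective amplitude is $0$. So $K_\fp\in D^{\leq b}$ only gives $m_\fp\leq b+\depth A_\fp$, and the injective amplitude of $K_\fp$ is bounded above by $b+\depth A_\fp$, not by $b$. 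Already $A=k[x]$, $K=A$ (so $b=0$), $B=k[x]/(x)$ shows both the intermediate claim and the conclusion fail: $K_{(x)}=A_{(x)}$ has injective amplitude $[0,1]$, and $L=R\operatorname{Hom}_A(B,K)\cong k[-1]$ is not in $D^{\leq 0}$. In general $H^i(L)_\fq$ is nonzero in degrees up to roughly $b+\depth A_\fp$, and when $A$ is infinite-dimensional $\depth A_\fp$ is not a priori bounded, so your argument gives no upper bound at all; this is precisely the nontrivial content of the lemma.

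The paper closes this gap differently: since $K\in D^b_{Coh}(A)$ and $K_\fp$ is dualizing for every $\fp$, the function $\fp\mapsto \HT\fp-\depth A_\fp$ equals the cohomological amplitude of $K_\fp$, hence is constructible; by \cite[Proposition 6.10.6]{EGA4_2} every $\Spec(A/\fp)$, and then (using finiteness of $B$ over $A$) every $\Spec(B/\fq)$, contains a nonempty Cohen--Macaulay open. Since $K$ is bounded one gets $L\in D^+_{Coh}(B)$ with $L_\fq$ dualizing for all $\fq$, and Lemma \ref{lem:PseuDualBdd} then yields boundedness of $L$ from the uniform (constructible, hence finite-valued) bound on these amplitudes. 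Some argument of this kind, producing a bound uniform in $\fp$ rather than the pointwise bound $b+\depth A_\fp$, is what your proof is missing.
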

\begin{proof}
    Since $K$ is a pseudo-dualizing complex,
    we have, for all $\fp\in\Spec(A)$,
    \[
    \HT\fp-\depth A_\fp=\max \{b-a\mid a\leq b, H^a(K_\fp)\neq 0, H^b(K_\fp)\neq 0\}.   
    \]
    Since $K\in D^b_{Coh}(A)$, the function on the right hand side is constructible.
    Therefore \cite[Proposition 6.10.6]{EGA4_2} shows that for every $\fp\in\Spec(A)$,
    $\Spec(A/\fp)$ contains a nonempty Cohen-Macaulay open subscheme.
    Since $B$ is finite over $A$ the same is true for $B$.
    Since $K$ is bounded, $L\in D_{Coh}(B)$.
    It is clear that $L_\fq$ is a dualizing complex for $B_\fq$ for all $\fq\in\Spec(B)$.
    We conclude by Lemma \ref{lem:PseuDualBdd}.
\end{proof}

Lemma \ref{lem:FiniteUpperShriek} has a number of consequences that tell us pseudo-dualizing complexes resemble various properties of dualizing complexes.

\begin{Cor}\label{cor:UpperShriek}
    Let $f:X\to Y$ be a morphism separated of finite type between Noetherian schemes.
    Then $f^!$ sends a pseudo-dualizing complex for $Y$ to a pseudo-dualizing complex for $X$.
\end{Cor}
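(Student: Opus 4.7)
The plan is to check the pseudo-dualizing condition pointwise on $X$, exploiting the fact that $f^!$ is compatible with restriction along open immersions (on both source and target) and that Definition \ref{def:weakDual} is a stalk-local condition. Fix $x\in X$ and put $y=f(x)$. After shrinking $Y$ to an affine open neighborhood $V=\Spec A$ of $y$ and $X$ to an affine open $U=\Spec B\subseteq f^{-1}(V)$ containing $x$, I may assume $f|_U$ corresponds to a finite-type ring map $A\to B$, and $K|_V$ is pseudo-dualizing on $V$.

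I would then factor $A\to B$ as $A\to A[t_1,\dots,t_n]\twoheadrightarrow B$, so that $f|_U = i\circ p$ with $p:\mathbf{A}^n_V\to V$ the smooth projection and $i$ a closed immersion; hence $(f|_U)^! = i^!\circ p^!$. For the smooth part, $p^!(K|_V)\cong p^*(K|_V)[n]$ since the relative canonical bundle is free. At any $z\in\mathbf{A}^n_V$ with $p(z)=y'$, the stalk of $p^*(K|_V)[n]$ at $z$ equals $K_{y'}\otimes^L_{\cO_{Y,y'}}\cO_{\mathbf{A}^n_V,z}[n]$. Since $\cO_{Y,y'}\to \cO_{\mathbf{A}^n_V,z}$ is a flat local map whose fiber is a localization of a polynomial algebra over a field, hence regular (in particular Gorenstein of finite Krull dimension), standard flat base change for dualizing complexes (\citestacks{0E4A}) shows the stalk is dualizing for $\cO_{\mathbf{A}^n_V,z}$. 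Thus $p^!(K|_V)$ is pseudo-dualizing on $\mathbf{A}^n_V$. Applying Lemma \ref{lem:FiniteUpperShriek} to the closed immersion $i$ then yields that $i^!(p^!(K|_V))$ is pseudo-dualizing on $\Spec B$; in particular its stalk at $x$ is dualizing for $\cO_{X,x}$.

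Running this at every $x\in X$ gives the stalk condition. Global membership in $D^b_{Coh}(X)$ follows either from the standard finiteness properties of $f^!$ for separated finite type morphisms or, a posteriori, from Lemma \ref{lem:PseuDualBdd}: once the stalks are dualizing, the Cohen-Macaulay locus in each $\Spec(\cO_{X,\eta}/\fp)$ is nonempty and open, so the boundedness of the stalkwise amplitude upgrades to global boundedness. The main obstacle I anticipate is the bookkeeping of the compatibility of $f^!$ with the affine reduction and with the factorization $f|_U = i\circ p$; once these compatibilities are cleanly in place, the two inputs---smooth base change for dualizing complexes and Lemma \ref{lem:FiniteUpperShriek}---assemble the corollary immediately.
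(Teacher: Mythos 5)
Your proof is correct and is essentially the paper's argument: both reduce to affine opens, factor the map through affine space followed by a closed immersion, handle the smooth part by $p^!\cong p^*(-)[n]$ and the finite part by Lemma \ref{lem:FiniteUpperShriek}, the only cosmetic difference being that the paper verifies the stalkwise dualizing condition in one stroke by flat base change to $\Spec(\cO_{Y,y})$ and \citestacks{0AA3}, whereas you rederive it through the factorization. One caution on your last step: the ``standard finiteness properties of $f^!$'' only give $D^+_{Coh}\to D^+_{Coh}$ and do \emph{not} yield boundedness in this possibly infinite-dimensional setting (that is precisely the issue Lemmas \ref{lem:PseuDualBdd} and \ref{lem:FiniteUpperShriek} address), but this is harmless since your local argument already gives boundedness on each member of a finite affine cover of the Noetherian scheme $X$, hence $f^!K\in D^b_{Coh}(X)$.
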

\begin{proof}
    Recall that $f^!$ is well-defined for all separated morphisms of finite type \citestacks{0AA0},
    compatible with composition \citestacks{0ATX},
    flat base change \citestacks{0E9U},
    and open immersion \citestacks{0AU0}.

    Let $K$ be a pseudo-dualizing complex for $Y$.
    Since the result is true
    after base change to $\Spec(\cO_{Y,y})$ for all $y\in Y$ \citestacks{0AA3},
    we see $(f^!K)_x$ is a dualizing complex for $\cO_{X,x}$ for all $x\in X$.
    Thus it suffices to show $f^!K\in D^b_{Coh}(X)$. 
    We may assume $X$ and $Y$ affine by compatibility with open immersion.
    By compatibility with composition,
    it suffices to treat the cases $X=\bA^1_Y$ and $f$ is finite (or even closed immersion).
    If $X=\bA^1_Y$ the result is trivial, see \citestacks{0AA1}.
    If $f$ is finite, the result is true by \citestacks{0AA2} and Lemma \ref{lem:FiniteUpperShriek}.
    Therefore the result is true for any $f$.
\end{proof}

\begin{Def}\label{def:Gor-2}
    Let $A$ be a Noetherian ring.
    We say $A$ is \emph{Gor-2} if $\Spec(A/\fp)$ has a nonempty Gorenstein open subset for all $\fp\in\Spec(A)$.
    This is equivalent to that the Gorenstein locus of any finite type $A$-algebra is open,
    see \cite[Proposition 1.7]{Gor-2-rings}.
    We recover this fact in Corollaries \ref{cor:Gor2preserved} and \ref{cor:Gor2impliesopen},
    and we will not use it.

    A locally Noetherian scheme is \emph{Gor-2} if it can be covered by affine opens $\Spec(A)$ where $A$ is Gor-2.
\end{Def}

\begin{Cor}\label{cor:DualFGAlgGor2}
    Let $A$ be a Noetherian ring that admits a pseudo-dualizing complex.
    Then every finite type $A$-algebra admits a pseudo-dualizing complex,
    and $A$ is Gor-2 and universally catenary.
    In particular, a Gorenstein ring is Gor-2.
\end{Cor}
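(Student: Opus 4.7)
The plan is to reduce all three assertions to Corollary \ref{cor:UpperShriek}, together with a generic-fiber spreading out argument based on Lemma \ref{lem:0A6Aforiso}.

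First I would establish that every finite type $A$-algebra $B$ admits a pseudo-dualizing complex. The morphism $f\colon\Spec(B)\to\Spec(A)$ is affine, hence separated, and of finite type; so if $K$ is a pseudo-dualizing complex for $A$, then $f^!K$ is a pseudo-dualizing complex for $B$ by Corollary \ref{cor:UpperShriek}. In particular every finite type $A$-algebra is catenary by Remark \ref{rem:DualRemAfterDef}, so $A$ is universally catenary.

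For Gor-2, fix $\fp\in\Spec(A)$ and apply the previous step to the finite type $A$-algebra $A/\fp$ to obtain a pseudo-dualizing complex $L$ for $A/\fp$. Localizing at the generic point $(0)$ of the domain $A/\fp$, $L_{(0)}$ is a dualizing complex for the field $\Frac(A/\fp)$, so $L_{(0)}\cong \Frac(A/\fp)[n]$ for some $n\in\bZ$. Applying Lemma \ref{lem:0A6Aforiso} with $S=A/\fp\setminus\{0\}$ produces a nonzero $g\in A/\fp$ such that $L_g\cong (A/\fp)_g[n]$ in $D((A/\fp)_g)$. Since $L_g$ is a pseudo-dualizing complex for $(A/\fp)_g$, for every prime $\fq$ of $(A/\fp)_g$ the localization $((A/\fp)_g)_\fq[n]$ is a dualizing complex for the local ring $((A/\fp)_g)_\fq$, forcing that local ring to be Gorenstein. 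Thus $D(g)\subset\Spec(A/\fp)$ is a nonempty Gorenstein open subset, as required. The final sentence of the corollary is then immediate: a Gorenstein ring admits itself as a pseudo-dualizing complex by Definition \ref{def:weakDual}, so Gor-2 follows from what has just been shown.

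I do not anticipate a serious obstacle: the finite-type step is a direct invocation of the $f^!$ formalism already set up in Corollary \ref{cor:UpperShriek}, and the Gor-2 step is a routine spread-out of the trivial fact that the dualizing complex of a field is a shift of the field. The only mildly delicate point is verifying that a pseudo-dualizing complex isomorphic to a shift of the structure sheaf on an open forces Gorenstein-ness at every point of that open, but this is immediate from the pointwise characterization in Definition \ref{def:weakDual} and the standard fact that a local ring is Gorenstein whenever it admits a shift of itself as a dualizing complex.
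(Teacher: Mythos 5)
Your proposal is correct and follows essentially the same route as the paper: finite type algebras via Corollary \ref{cor:UpperShriek}, universal catenarity via Remark \ref{rem:DualRemAfterDef}, and Gor-2 by passing to $A/\fp$ (the paper reduces via Lemma \ref{lem:FiniteUpperShriek}, you via the finite-type step, which is the same thing for a quotient), noting the pseudo-dualizing complex is a shift of the fraction field at the generic point, and spreading out with Lemma \ref{lem:0A6Aforiso}.
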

\begin{proof}
    The ``in particular'' statement follows from the fact that $A$ is a pseudo-dualizing complex for a Gorenstein $A$.

    Every finite type $A$-algebra admits a pseudo-dualizing complex by Corollary \ref{cor:UpperShriek},
    so $A$ is universally catenary by Remark \ref{rem:DualRemAfterDef}.
    To see $A$ is Gor-2, we may assume by Lemma \ref{lem:FiniteUpperShriek} that $A$ is an integral domain, and we must show $A_f$ is Gorenstein for some nonzero $f$.
    This is clear: a pseudo-dualizing complex $K\in D^b_{Coh}(A)$ is a shift of the fraction field at the generic point of $A$,
    thus for some $f\neq 0$, $K_f$ is a shift of $A_f$ by Lemma \ref{lem:0A6Aforiso}.
\end{proof}

\begin{Cor}\label{cor:Gor2preserved}
    Let $A$ be a Noetherian ring that is Gor-2.
    Then every finite type $A$-algebra is Gor-2.
\end{Cor}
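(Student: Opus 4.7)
The plan is to localize on the base ring $A$ in order to reduce to a situation where Corollary \ref{cor:DualFGAlgGor2} applies directly. Concretely, let $B$ be a finite type $A$-algebra and fix $\fq \in \Spec B$; we must exhibit a nonempty Gorenstein open subset of $\Spec(B/\fq)$. Setting $\fp = A \cap \fq$, the induced map $A/\fp \hookrightarrow B/\fq$ is injective and presents $B/\fq$ as a finite type $A/\fp$-algebra that happens to be a domain.

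By the Gor-2 hypothesis on $A$, there exists $g \in A \setminus \fp$ such that $(A/\fp)_g$ is Gorenstein. Note that $g \notin \fq$, so $(B/\fq)_g$ is a nonzero localization of the domain $B/\fq$. Replacing $A$ by $(A/\fp)_g$ and $B$ by $(B/\fq)_g$, we are in the situation where $A$ is Gorenstein (hence serves as a pseudo-dualizing complex for itself) and $B$ is a domain of finite type over $A$. Corollary \ref{cor:DualFGAlgGor2} then tells us that $B$ is Gor-2; applying this property to the zero ideal of the domain $B$ produces a nonempty Gorenstein open in $\Spec B$, which, as $\Spec B$ is an open subscheme of the original $\Spec(B/\fq)$, gives the desired open there as well. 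Since $\fq$ was arbitrary, the original $B$ is Gor-2.

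The whole argument is essentially a reduction step; no serious obstacle arises because the substantive content is carried by Corollary \ref{cor:DualFGAlgGor2} together with the definition of Gor-2. The only thing to check along the way is that the localization at $g$ does not vacuously collapse $B/\fq$ to the zero ring, and this is immediate from $g \notin \fq$ together with the fact that $B/\fq$ is a domain.
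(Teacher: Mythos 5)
Your argument is correct and is essentially the paper's own proof: the paper's one-line reduction ``by definition, we may assume $A$ Gorenstein'' is exactly the localization step you spell out (passing to $(A/\fp)_g$ and $(B/\fq)_g$), after which both you and the paper conclude via Corollary \ref{cor:DualFGAlgGor2}. No issues.
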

\begin{proof}
    By definition, we may assume $A$ Gorenstein, and we conclude by Corollary \ref{cor:DualFGAlgGor2}.
\end{proof}

\begin{Cor}\label{cor:DualizingFunctorGood}
    Let $A$ be a Noetherian ring, $K\in D(A)$ a pseudo-dualizing complex.
    Then the functor $D_K(-)=R\operatorname{Hom}_A(-,K)$
    maps $D^b_{Coh}(A)$ into $D^b_{Coh}(A)$,
    and the canonical map $\operatorname{id}\to D_K\circ D_K$ is an isomorphism of functors.
\end{Cor}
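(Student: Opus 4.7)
The strategy is to reduce part (a) to the cyclic case via prime filtration and then invoke Lemma \ref{lem:FiniteUpperShriek}; part (b) is then a pointwise check using that each $K_\fp$ is genuinely dualizing.

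For (a), using the canonical truncation triangles $\tau_{\leq a}M \to M \to \tau_{\geq a+1}M \to +1$, induction on the cohomological amplitude of $M$ reduces the claim to showing $R\operatorname{Hom}_A(N, K) \in D^b_{Coh}(A)$ for every finitely generated $A$-module $N$. Such an $N$ admits a prime filtration $0 = N_0 \subset N_1 \subset \dots \subset N_\ell = N$ with successive quotients $N_i/N_{i-1} \cong A/\fp_i$ (\citestacks{00L0}). Applying $R\operatorname{Hom}_A(-, K)$ to the associated short exact sequences produces distinguished triangles in $D(A)$, so induction on $\ell$ reduces us further to showing $R\operatorname{Hom}_A(A/\fp, K) \in D^b_{Coh}(A)$ for each prime $\fp$. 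But $A \to A/\fp$ is a finite ring map, so Lemma \ref{lem:FiniteUpperShriek} says $R\operatorname{Hom}_A(A/\fp, K)$ is a pseudo-dualizing complex for $A/\fp$; by Definition \ref{def:weakDual} it lies in $D^b_{Coh}(A/\fp) \subseteq D^b_{Coh}(A)$.

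For (b), the biduality transformation $\operatorname{id} \to D_K \circ D_K$ is the usual one arising from tensor-hom adjunction. By part (a), both $M$ and $D_K(M)$ lie in $D^b_{Coh}(A)$, hence are pseudo-coherent, so two applications of \citestacks{0A6A} identify the localization at any $\fp \in \Spec(A)$ of the biduality map with
\[ M_\fp \longrightarrow R\operatorname{Hom}_{A_\fp}\bigl(R\operatorname{Hom}_{A_\fp}(M_\fp, K_\fp),\, K_\fp\bigr), \]
which is an isomorphism because $K_\fp$ is a genuine dualizing complex for $A_\fp$ and $M_\fp \in D^b_{Coh}(A_\fp)$. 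Since both sides of the original map lie in $D^b_{Coh}(A)$ and agree on all localizations, they are isomorphic.

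The main obstacle the proof must navigate is that in the infinite-dimensional setting there is no uniform bound on the injective dimension of $K$ across primes of $A$, so a direct global amplitude estimate for $R\operatorname{Hom}_A(M, K)$ would be delicate; reducing to cyclic modules $A/\fp$ via the prime filtration and invoking Lemma \ref{lem:FiniteUpperShriek} sidesteps this difficulty entirely.
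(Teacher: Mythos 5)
Your proof is correct and follows essentially the same route as the paper: part (a) rests on Lemma \ref{lem:FiniteUpperShriek} applied to $R\operatorname{Hom}_A(A/\fp,K)$ (you merely spell out the truncation/prime-filtration d\'evissage that the paper leaves implicit), and part (b) is the same localization argument via \citestacks{0A6A} reducing to the classical local biduality for the genuine dualizing complex $K_\fp$.
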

\begin{proof}
    By Lemma \ref{lem:FiniteUpperShriek},
    $D_K(A/\fp)\in D^b_{Coh}(A)$ for all $\fp\in\Spec(A)$.
    Thus $D_K$
    maps $D^b_{Coh}(A)$ into $D^b_{Coh}(A)$.
    The fact that $\operatorname{id}\to D_K\circ D_K$ is an isomorphism can be checked locally by \citestacks{0A6A},
    and the local case is well-known, cf. \citestacks{0A7C}.
\end{proof}

\begin{Rem}
    The functor $D_K(-)$ does not map $D^+_{Coh}(A)$ into either $D^-(A)$ or $D_{Coh}(A)$ when $A$ is not finite-dimensional.
    To see this, for $\fp\in\Spec(A)$ let $a_\fp$ be the smallest integer with $H^{a_\fp}(K_\fp)\neq 0$.
    Then $\fp\mapsto a_\fp$ is a bounded function since $K$ is bounded,
    and $R\operatorname{Hom}_{A}(A/\fm,K)\cong A/\fm[-a_\fm-\HT\fm]$
    for all maximal ideals $\fm$ of $A$.
    Thus if we pick $\fm_n$ so that $\HT\fm_n\geq 2n$ then 
    \[D_K\left(\bigoplus_n A/\fm_n[-n]\right)\cong \bigoplus_n A/\fm_n[-a_{\fm_n}+n-\HT\fm_n]\] is not in $D^-(A)$;
    and if we pick $\fm_n$ so that $\HT\fm_n> \HT\fm_{n-1}$
    then \[D_K\left(\bigoplus_n A/\fm_n[-\HT\fm_n]\right)\cong \bigoplus_n A/\fm_n[-a_{\fm_n}]\] is not in $D_{Coh}(A)$.

    On the other hand, $D_K(-)$ always map $D^-(A)$ (resp. $D^-_{Coh}(A)$) into $D^+(A)$ (resp. $D^+_{Coh}(A)$), as this is true for any $K\in D^+(A)$ (resp. $D^+_{Coh}(A)$).
\end{Rem}

\begin{Cor}\label{cor:DualizeUniqueuptoL}
    Let $A$ be a Noetherian ring, $K, K'\in D(A)$ two pseudo-dualizing complexes.
    Then there exists an invertible object $L\in D(A)$
    such that $K'\cong K\otimes^L_A L.$
\end{Cor}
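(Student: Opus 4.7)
The plan is to adapt the classical uniqueness proof for dualizing complexes (\citestacks{0A7F}) to the pseudo-dualizing setting. Set $L := R\operatorname{Hom}_A(K, K')$ and consider the canonical evaluation morphism
\[
\operatorname{ev} \colon K \otimes_A^L L \longrightarrow K'.
\]
By Corollary \ref{cor:DualizingFunctorGood} applied to the pseudo-dualizing complex $K$ and to $K' \in D^b_{Coh}(A)$, the object $L$ lies in $D^b_{Coh}(A)$. The goal is to show that $L$ is invertible and that $\operatorname{ev}$ is an isomorphism; together these produce the desired identification $K' \cong K \otimes_A^L L$.

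Both claims will be verified stalk-wise on $\Spec(A)$. The formation of $L$ commutes with localization by \citestacks{0A6A} (since $K$ is pseudo-coherent and $K'$ is bounded), and $\otimes^L$ and $\operatorname{ev}$ are likewise compatible with the flat base change $A \to A_\fp$. At each prime $\fp$, the complexes $K_\fp$ and $K'_\fp$ are honest dualizing complexes for the (finite-dimensional) Noetherian local ring $A_\fp$. Classical uniqueness for dualizing complexes over a local Noetherian ring then gives $K'_\fp \cong K_\fp[n_\fp]$ for a unique integer $n_\fp$, so $L_\fp \cong A_\fp[n_\fp]$ and $\operatorname{ev}_\fp$ becomes the canonical identification.

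To conclude globally, note first that the source and target of $\operatorname{ev}$ both lie in $D^b_{Coh}(A)$, so stalk-wise isomorphism implies that $\operatorname{ev}$ itself is an isomorphism. For invertibility, $L$ is a bounded pseudo-coherent complex with each stalk $L_\fp \cong A_\fp[n_\fp]$ perfect, hence $L$ is perfect by the local nature of perfectness. The natural pairing $L \otimes_A^L R\operatorname{Hom}_A(L, A) \to A$ can then be checked stalk-wise to be an isomorphism, which witnesses invertibility of $L$.

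The only delicate point is packaging these stalk-wise statements into global ones, namely that a bounded pseudo-coherent complex whose every stalk is a shift of the structure sheaf is genuinely invertible in $D(A)$, and that $\operatorname{ev}$ being an isomorphism at every stalk implies it is an isomorphism globally. Both rest on standard local-global principles for pseudo-coherent complexes over Noetherian rings, so the argument is essentially formal once Corollary \ref{cor:DualizingFunctorGood} is in hand.
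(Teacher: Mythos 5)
Your proposal is correct, and its skeleton coincides with the paper's: you take the same candidate $L=R\operatorname{Hom}_A(K,K')$, observe $L\in D^b_{Coh}(A)$ via Corollary \ref{cor:DualizingFunctorGood}, and verify that the evaluation map $K\otimes_A^L L\to K'$ is an isomorphism by localizing (using \citestacks{0A6A}) and invoking the local uniqueness of dualizing complexes, exactly as in the paper. The one place where you genuinely diverge is the invertibility of $L$: the paper gets it abstractly, noting $L=D_{K'}\circ D_K(A)$ is the image of $A$ under an $A$-linear self-equivalence of $D^b_{Coh}(A)$ and citing \citestacks{0A7E}, which uses the full strength of the biduality statement in Corollary \ref{cor:DualizingFunctorGood}; you instead argue stalk-wise that $L_\fp\cong A_\fp[n_\fp]$, conclude perfectness, and exhibit the inverse as $R\operatorname{Hom}_A(L,A)$ by checking the pairing at stalks. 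Your route is more hands-on and only needs $L\in D^b_{Coh}(A)$ plus local uniqueness, at the cost of the mild bookkeeping you flag at the end: ``perfect at every stalk implies perfect'' is not true for unbounded complexes, and here it is the boundedness of $L$ (so the shifts $n_\fp$ lie in a fixed range, giving uniform tor-amplitude for a pseudo-coherent complex) that makes the local-to-global step legitimate; with that caveat spelled out, both arguments are complete, and the paper's is shorter because the categorical lemma absorbs precisely this verification.
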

\begin{proof}
    Let $L=R\operatorname{Hom}_A(K,K')$.
    Then $L=D_{K'}\circ D_{K}(A)$ is an invertible object by Corollary \ref{cor:DualizingFunctorGood} and \citestacks{0A7E}.
    The statement that the canonical map $K\otimes^L_A L\to K'$ is an isomorphism can be checked locally by \citestacks{0A6A},
    and when $A$ is local it follows from \citestacks{0A69}.
\end{proof}

We can characterize the existence of a pseudo-dualizing complex as follows.
For finite-dimensional rings this is due to Kawasaki \cite{Kawasaki-arithmetic-Macaulay}.

\begin{Thm}\label{thm:SharpKawasaki}
    Let $A$ be a Noetherian ring.
    Then $A$ admits a pseudo-dualizing complex if and only if there exists a finite type $A$-algebra $B$ that is Gorenstein and admits a section $B\to A$.
\end{Thm}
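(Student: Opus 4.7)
I would prove the equivalence in two directions. The ($\Leftarrow$) direction is a formal consequence of the framework of \S\ref{sec:PesudoDual}, while the ($\Rightarrow$) direction is an adaptation of Kawasaki's classical construction from \cite{Kawasaki-arithmetic-Macaulay}.

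For ($\Leftarrow$), suppose $B$ is a Gorenstein finite-type $A$-algebra with a section $s\colon B \to A$. Then $s$ is surjective, so $A$ is a finite (cyclic) $B$-module via $s$. Since $B$ is Gorenstein, for each $\fq \in \Spec B$ the localization $B_\fq$ is a Gorenstein local ring, hence automatically finite-dimensional, and $B_\fq$ placed in degree $0$ is a dualizing complex for $B_\fq$. Thus $B \in D(B)$ is itself a pseudo-dualizing complex for $B$. Applying Lemma \ref{lem:FiniteUpperShriek} to the finite map $s$ with this pseudo-dualizing complex, we obtain that $R\operatorname{Hom}_B(A, B) \in D(A)$ is a pseudo-dualizing complex for $A$.

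For ($\Rightarrow$), let $K \in D(A)$ be a pseudo-dualizing complex. The plan is to invoke Kawasaki's construction. At each closed point $\fm$ of $\Spec A$, the complex $K_\fm$ is a genuine dualizing complex on the (necessarily finite-dimensional) local ring $A_\fm$, so Kawasaki's local construction applies and yields a Gorenstein finite-type algebra with a section. The core of the argument is to globalize this to obtain a single finite-type $A$-algebra $B$. The finite-type-over-$A$ formulation is convenient here: Kawasaki's construction naturally produces $B$ as an explicit quotient $A[x_1, \ldots, x_n]/J$ together with a section $x_i \mapsto a_i$ for prescribed $a_i \in A$, so both the $A$-algebra structure and the section are built into the output. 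By Corollary \ref{cor:DualFGAlgGor2} we already know $A$ is universally catenary and Gor-2, which are the general properties Kawasaki's construction relies on.

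The main obstacle is the ($\Rightarrow$) direction: Kawasaki's construction is intricate, involving subtle choices of basic subsystems of parameters adapted to the cohomological structure of $K$. The principal adaptation needed for our setting is to verify that the construction globalizes uniformly in the infinite-dimensional case. The key tools should be the representability of $K$ as a global bounded complex of finitely generated $A$-modules (Lemma \ref{lem:DescendDbCoh}) together with the uniform cohomological bound for $K$ coming from Lemma \ref{lem:PseuDualBdd}, which together should allow Kawasaki's local constructions to be pieced together into a single finite-type $A$-algebra $B$ of the required form.
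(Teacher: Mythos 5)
Your ($\Leftarrow$) direction is correct and is exactly the paper's argument: the section $B\to A$ is surjective, so $A$ is finite over the Gorenstein ring $B$, $B$ is a pseudo-dualizing complex for itself, and Lemma \ref{lem:FiniteUpperShriek} gives the pseudo-dualizing complex $R\operatorname{Hom}_B(A,B)$ for $A$.

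The ($\Rightarrow$) direction, however, has a genuine gap: the globalization you yourself flag as ``the main obstacle'' is never carried out, and the tools you propose cannot do the job. Running Kawasaki's construction at each closed point $\fm$ produces, for each $\fm$ separately, a Gorenstein finite-type algebra over $A_\fm$ with a section, but these depend on non-canonical choices (ideals, systems of parameters) made at possibly infinitely many closed points, and no mechanism is offered for assembling them into a single finite-type $A$-algebra with a global section. Lemma \ref{lem:DescendDbCoh} descends a pseudo-coherent complex along a filtered colimit of rings mapping flatly to the colimit (its role in the paper is for Henselization and ultrapowers); it says nothing about gluing constructions over the points of $\Spec(A)$. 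Lemma \ref{lem:PseuDualBdd} only provides boundedness of $K$, which is already built into the definition. The paper's route avoids any gluing: by Corollary \ref{cor:UpperShriek} one may replace $A$ by any finite-type $A$-algebra admitting a section; by Remark \ref{rem:DualRemAfterDef}, Corollary \ref{cor:DualFGAlgGor2} and \citestacks{0AWY}, $A$ is universally catenary, has a codimension function, is Gor-2, and has Gorenstein formal fibers; these are precisely the hypotheses of Kawasaki's \emph{global} theorem (proof of Theorem 1.3 in \cite{Kawasaki-quotient-of-Macaulay}), which yields a Cohen--Macaulay finite-type $A$-algebra with a section, so one may assume $A$ Cohen--Macaulay; finally, with $\Spec(A)$ connected and $K$ shifted into degree $0$, the square-zero extension $A\oplus H^0(K)$ is Gorenstein by Reiten's theorem, a canonical, global construction requiring no patching. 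The paper's remark after the theorem does note that the material of \cite{Kawasaki-arithmetic-Macaulay} suffices ``with a minor twist,'' but that still means invoking Kawasaki's global construction together with the global canonical module $H^0(K)$; your pointwise-then-glue strategy, as stated, is not a proof.
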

\begin{proof}
``If'' follows from Lemma \ref{lem:FiniteUpperShriek}.

We proceed to show ``only if.''
    Every finite type $A$-algebra admits a pseudo-dualizing complex,
    Corollary \ref{cor:UpperShriek},
    so we may replace $A$ by any finite type $A$-algebra that admits a section.

    By Remark \ref{rem:DualRemAfterDef},
    Corollary \ref{cor:DualFGAlgGor2},
    and \citestacks{0AWY},
    $A$ is universally catenary, has a codimension function,
    is Gor-2, and has Gorenstein formal fibers.
    By \cite[p. 2738, proof of Theorem 1.3]{Kawasaki-quotient-of-Macaulay},
    there exists a finite type $A$-algebra $B$ that is Cohen-Macaulay and admits a section.   
    Replace $A$ by $B$, we may assume our $A$ is Cohen-Macaulay.

    Assume $A$ is Cohen-Macaulay.
    We may assume $\Spec(A)$ is connected,
    and $K$ is concentrated in degree $0$.
    Then the square-zero extension $A\oplus H^0(K)$ is Gorenstein by \cite{Reiten-AplusK} (see also \cite[Corollary 2.12]{Aoyama-CanMod}), finishing the proof.
\end{proof}

\begin{Rem}
    Since our $A$ actually has a pseudo-dualizing complex,
    we can avoid the materials in \cite{Kawasaki-quotient-of-Macaulay};
    the materials in \cite{Kawasaki-arithmetic-Macaulay}
    are sufficient with a minor twist.
    The constructions are the same;
    \cite{Kawasaki-quotient-of-Macaulay} proved it works in a greater generality.
\end{Rem}

\section{Formal lifting of pseudo-dualizing complexes}\label{sec:Lift}
In this section we prove the following theorem,
which, in particular, recovers the existence of dualizing complexes for complete local rings.
\begin{Thm}\label{thm:GLOBALdeform}
    Let $A$ be a Noetherian ring, $I$ an ideal of $A$.
    Assume that $A$ is $I$-adically complete. 
    If $A/I$ admits a pseudo-dualizing (resp. dualizing) complex $K_1$,
    then $A$ admits a pseudo-dualizing (resp. dualizing) complex $K$ such that $R\operatorname{Hom}_{A}(A/I,K)\cong K_1$.
\end{Thm}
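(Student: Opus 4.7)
I plan to construct $K$ as a derived limit of a compatible system of pseudo-dualizing complexes on the tower $A_n := A/I^n$, starting from the given $K_1$. For $n \geq 1$, the kernel $I^n/I^{n+1}$ of the surjection $A_{n+1} \twoheadrightarrow A_n$ satisfies $(I^n/I^{n+1})^2 \subseteq I^{2n}/I^{n+1} = 0$, so each step in the tower is a square-zero extension with finitely generated kernel. The problem thus reduces to two sub-problems: (i) lifting a pseudo-dualizing complex along a square-zero surjection of Noetherian rings, and (ii) taking a well-behaved derived limit in $D(A)$.

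The key technical input is the following lifting lemma: given a surjection $B' \twoheadrightarrow B$ of Noetherian rings with square-zero, finitely generated kernel $J$, and a pseudo-dualizing complex $K$ on $B$, there exists a pseudo-dualizing complex $K'$ on $B'$ with $R\operatorname{Hom}_{B'}(B, K') \cong K$. The natural approach is deformation-theoretic: the obstruction to lifting a pseudo-coherent complex along a square-zero extension lies in an $\Ext^2_B(K, J \otimes^L_B K)$-type group, and I would hope that the internal duality $R\operatorname{Hom}_B(K, K) \cong B$ provided by Corollary \ref{cor:DualizingFunctorGood}, combined with a projection-formula argument, reduces this to $\Ext^2_B(B, J) = H^2(J) = 0$. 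Alternatively one could work more concretely by lifting an injective representative of $K$, or reduce to the local case. Once the lemma is in hand, iterate it over $n$ to produce pseudo-dualizing $K_n \in D(A_n)$ with $R\operatorname{Hom}_{A_{n+1}}(A_n, K_{n+1}) \cong K_n$.

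Now set $K := R\lim_n K_n \in D(A)$. The tower compatibility combined with the adjunction $R\operatorname{Hom}_{A_m}(A_n, -) \cong R\operatorname{Hom}_{A_{m-1}}(A_n, R\operatorname{Hom}_{A_m}(A_{m-1}, -))$ gives, by iteration, $R\operatorname{Hom}_A(A/I^n, K_m) \cong K_n$ for all $m \geq n$; hence the inverse system is eventually constant and $R\operatorname{Hom}_A(A/I^n, K) \cong K_n$, yielding $R\operatorname{Hom}_A(A/I, K) \cong K_1$ in the case $n = 1$. It remains to check that $K$ is pseudo-dualizing. Boundedness and coherence of $H^\bullet(K)$ should follow from uniform bounds on $H^\bullet(K_n)$ together with a Mittag--Leffler argument on the transition maps. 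The pointwise condition that $K_\fp$ be dualizing for every $\fp \in \Spec A$ splits into two cases: for $\fp \supset I$, it follows from compatibility with the tower together with $I$-adic completeness; for $\fp \not\supset I$, the argument is subtler since each $K_n$ is $I$-power torsion while $K$ need not be (the toy example $A = \bZ_p$, $K_n = \bZ/p^n$, $K = \bZ_p$ with generic fiber $\bQ_p$ illustrates this). The main obstacles I anticipate are the square-zero lifting lemma and the dualizing check at primes outside $V(I)$.
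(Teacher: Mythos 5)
Your square-zero lifting step is essentially the paper's key move: the obstruction to realizing a pseudo-dualizing $K$ on $B=B'/J$ as $R\operatorname{Hom}_{B'}(B,K')$ is a map $R\operatorname{Hom}_B(J,K)\to K[2]$ in $D(B)$, and it dies because $\operatorname{Hom}_{D(B)}(R\operatorname{Hom}_B(J,K),K[2])\cong H^2(J)=0$ by the biduality of Corollary \ref{cor:DualizingFunctorGood} (one also needs Lemma \ref{lem:nilReflect} to see the lift is again coherent and pseudo-dualizing). The genuine gap is the gluing step. With the compatibility $R\operatorname{Hom}_{A_{n+1}}(A_n,K_{n+1})\cong K_n$, the natural maps go the wrong way for a limit: contravariance of the first slot applied to $A_{n+1}\twoheadrightarrow A_n$ gives $K_n=R\operatorname{Hom}_{A_{n+1}}(A_n,K_{n+1})\to R\operatorname{Hom}_{A_{n+1}}(A_{n+1},K_{n+1})=K_{n+1}$, so the $K_n$ form a \emph{direct} system and ``$K:=R\lim_n K_n$'' has no canonical meaning. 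Forcing an inverse system does not help: in your own toy example $A=\bZ_p$, $K_n=\bZ/p^n$ with the reduction maps, the objects $R\operatorname{Hom}_{\bZ_p}(\bZ/p,\bZ/p^m)\cong \bZ/p\oplus\bZ/p[-1]$ are constant in $m$ but the transition maps are not isomorphisms (on $H^0$ the generator $p^m$ maps to $0$), and indeed $R\operatorname{Hom}_{\bZ_p}(\bZ/p,R\lim_m\bZ/p^m)=R\operatorname{Hom}_{\bZ_p}(\bZ/p,\bZ_p)=\bZ/p[-1]\neq K_1$; so the ``eventually constant, hence $R\operatorname{Hom}_A(A/I,K)\cong K_1$'' argument fails. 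The correct construction is the opposite one: take the colimit along the natural maps $K_n\to K_{n+1}$ and then the derived $I$-adic completion. The paper implements exactly this (after reducing to $I=fA$ principal) by extending the differentials across the $f$-power-torsion injective hulls, obtaining $J^\bullet_\infty=\colim_n J^\bullet_n$ (Claim \ref{clm:extendd}, which is your lifting lemma in disguise), and setting $K$ to be its derived completion, so that $R\operatorname{Hom}_A(A/fA,K)=R\operatorname{Hom}_A(A/fA,J^\bullet_\infty)=J^\bullet_1$.

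The remaining verifications also need more than a Mittag--Leffler argument. Boundedness of $J^\bullet_\infty$ uniformly in $n$ is not automatic when $\dim A=\infty$; the paper needs Gor-2 of $A/fA$ and the coprofondeur estimate of Lemma \ref{lem:coprofEstimate} to bound the amplitude of all $K_n$ at once. Coherence of $K$ is extracted from coherence of $R\operatorname{Hom}_A(A/fA,K)$ via derived Nakayama for derived $f$-adically complete complexes (Lemma \ref{lem:f!Cohf*Coh}), not from convergence of cohomology of the tower. Finally, your worry about primes $\fp\not\supset I$ evaporates once you observe that $I$ lies in the Jacobson radical of the complete ring $A$, so every maximal ideal contains $I$, and a bounded coherent complex is pseudo-dualizing as soon as it is dualizing at all maximal ideals; there Hartshorne's criterion $R\operatorname{Hom}_A(k,K)\simeq k[\text{shift}]$ applies (Lemma \ref{lem:radReflect}). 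So the dualizing property at primes outside $V(I)$ is never checked directly---it comes for free from the maximal-ideal criterion.
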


Note that this implies $A$ is universally catenary by Corollary \ref{cor:DualFGAlgGor2}.
Compare with \cite[Proposition 1.1]{Greco-Universal-Catenary-No-Lift}.

From Lemma \ref{lem:FiniteUpperShriek} and Theorem \ref{thm:HenselHasDual}
we get the following immediate
\begin{Cor}
    Let $(A,I)$ be a Henselian pair where $(A,\fm,k)$ is a Noetherian local ring.
    Assume that the $I$-adic completion map $A\to \lim_n A/I^n$ is regular.
    Then $A$ admits a dualizing complex if and only if $A/I$ does.
\end{Cor}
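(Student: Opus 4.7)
The plan is to handle each direction of the equivalence separately; the cited ingredients line up so cleanly that no substantial work is required in between.

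For the ``only if'' direction, I would start from a dualizing complex $K$ on $A$. Since $A \to A/I$ is a surjection, hence in particular finite, Lemma \ref{lem:FiniteUpperShriek} applied to $K$ (which is automatically pseudo-dualizing) shows that $R\operatorname{Hom}_A(A/I, K)$ is a pseudo-dualizing complex on $A/I$. As $A/I$ is Noetherian local and therefore finite-dimensional, Remark \ref{rem:DualRemAfterDef} promotes this pseudo-dualizing complex to an honest dualizing complex.

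For the ``if'' direction, I would set $B := \lim_n A/I^n$. Standard facts about completions of Noetherian rings at an ideal give that $B$ is Noetherian and $IB$-adically complete, and that $B/IB \cong A/I$; in particular the hypothesis on $A/I$ hands us a dualizing complex on $B/IB$. Theorem \ref{thm:GLOBALdeform} applied to the pair $(B, IB)$ then produces a dualizing complex on $B$. At this stage the hypotheses of Theorem \ref{thm:HenselHasDual} are precisely what is available --- $(A, I)$ is a Henselian pair with $A$ Noetherian local, $A \to B$ is regular by assumption, and $B$ now admits a dualizing complex --- so that theorem outputs a dualizing complex on $A$.

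No genuine obstacle is anticipated: the corollary is essentially a formal combination of Theorem \ref{thm:GLOBALdeform} (used to lift the dualizing complex from $A/I$ up to the completion $B$) and Theorem \ref{thm:HenselHasDual} (used to descend it back from $B$ to $A$). The only verifications required are the routine observations about $B$ recorded above.
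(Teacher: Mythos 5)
Your proposal is correct and matches the paper's intended argument: the paper derives the corollary as an immediate consequence of Lemma \ref{lem:FiniteUpperShriek} (the ``only if'' direction, via the finite map $A\to A/I$) and Theorem \ref{thm:HenselHasDual}, with Theorem \ref{thm:GLOBALdeform} applied to the $IB$-adically complete ring $B=\lim_n A/I^n$ supplying the dualizing complex on the completion, exactly as you do. The routine verifications you record about $B$ are all that is needed.
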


We need some preparations for Theorem \ref{thm:GLOBALdeform}.

\begin{Lem}\label{lem:radReflect}
    Let $A$ be a Noetherian ring, $I$ an ideal of $A$.
    Assume that $I$ is contained in the Jacobson radical of $A$.
    
    Let $K\in D^b_{Coh}(A)$.
    If $R\operatorname{Hom}_{A}(A/I,K)$ is a pseudo-dualizing complex for $A/I$,
    then $K$ is a pseudo-dualizing complex for $A$.
\end{Lem}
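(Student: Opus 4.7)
The plan is to reduce to the local case by testing at each maximal ideal of $A$, then verify that $K$ is a dualizing complex using the criterion based on $R\operatorname{Hom}_A(k,-)$ together with Lemma \ref{lem:ExtVanishFurther}.

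For the reduction, recall that being pseudo-dualizing can be checked at each maximal ideal (the observation just after Definition \ref{def:weakDual}). Fix a maximal ideal $\fm$ of $A$. Since $I$ is contained in the Jacobson radical, $I\subseteq\fm$, so $\fm/I$ is maximal in $A/I$. Since $A/I$ is pseudo-coherent over the Noetherian ring $A$, \citestacks{0A6A} gives
\[
R\operatorname{Hom}_A(A/I,K)_\fm\cong R\operatorname{Hom}_{A_\fm}(A_\fm/IA_\fm,K_\fm),
\]
and by assumption this is a dualizing complex for $A_\fm/IA_\fm$. Replacing $A$ with $A_\fm$, I may suppose $(A,\fm,k)$ is Noetherian local, $I\subseteq\fm$, and $R\operatorname{Hom}_A(A/I,K)$ is a dualizing complex for $A/I$.

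In this local setting, since $k$ is the common residue field of $A$ and $A/I$, adjunction gives
\[
R\operatorname{Hom}_A(k,K)\cong R\operatorname{Hom}_{A/I}\bigl(k,\,R\operatorname{Hom}_A(A/I,K)\bigr)\cong k[n]
\]
for some integer $n$, using that the inner $R\operatorname{Hom}$ is a dualizing complex for $A/I$. Choosing any $b\geq n$ with $K\in D^{\leq b}_{Coh}(A)$ (possible as $K$ is bounded), Lemma \ref{lem:ExtVanishFurther} converts the vanishing $\operatorname{Ext}^m_A(k,K)=0$ for $b+1\leq m\leq b+\dim(A)+1$ into the vanishing of $\operatorname{Ext}^m_A(N,K)$ for every finite $A$-module $N$ and every $m>b$. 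Localizing at each prime $\fp$ and translating to Bass numbers $\mu^m(\fp,K)=\dim_{\kappa(\fp)}\operatorname{Ext}^m_{A_\fp}(\kappa(\fp),K_\fp)$, one sees that all Bass numbers of $K$ above degree $b$ vanish, whence $K$ has finite injective dimension. Combined with $K\in D^b_{Coh}(A)$ and $R\operatorname{Hom}_A(k,K)\cong k[n]$, the classical local characterization of dualizing complexes (cf.\ \cite[Chapter V, \S 3]{Residues-Duality}) identifies $K$ as a dualizing complex for $A$.

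The main obstacle is this last classical invocation: Lemma \ref{lem:ExtVanishFurther} a priori controls $\operatorname{Ext}$ only against \emph{finite} modules, whereas finite injective dimension, as well as the criterion for being a dualizing complex, nominally concerns arbitrary modules. The Bass-number step above is what bridges the gap, since a minimal injective resolution of a bounded-cohomology complex over a Noetherian local ring has its term in degree $m$ determined by $\mu^m(\fp,K)$ as $\fp$ ranges over $\Spec(A)$, so the vanishing of all these Bass numbers above degree $b$ gives a genuine injective resolution of length $\leq b$.
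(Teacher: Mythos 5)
Your argument is correct and follows essentially the same route as the paper: reduce to the local case at a maximal ideal (possible since $I$ lies in the Jacobson radical and pseudo-dualizing is checked at maximal ideals), use adjunction to see $R\operatorname{Hom}_A(k,K)$ is a shift of $k$, and conclude by the classical local criterion \cite[Chapter V, Proposition 3.4]{Residues-Duality}. The detour through Lemma \ref{lem:ExtVanishFurther} and Bass numbers to establish finite injective dimension is harmless but redundant, since that criterion needs only $K\in D^b_{Coh}(A)$ and $R\operatorname{Hom}_A(k,K)\cong k[n]$ (and mind the shift convention: the nonzero $\operatorname{Ext}$ sits in degree $-n$, so the bound should read $b\geq -n$).
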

\begin{proof}
    We may assume $(A,\fm,k)$ local.
    We have
    $$R\operatorname{Hom}_{A}(k,K)=R\operatorname{Hom}_{A/I}(k,R\operatorname{Hom}_{A}(A/I,K))$$
    is a shift of $k$,
    so $K$ is a dualizing complex by \cite[Chapter V, Proposition 3.4]{Residues-Duality}.
\end{proof}

\begin{Lem}\label{lem:f!Cohf*Coh}
    Let $A$ be a Noetherian ring, $f\in A$.
    Let $M\in D^{b}(A)$.
    Consider the following conditions.
    \begin{enumerate}[label=$(\roman*)$]
        \item\label{f!f*:f!} $R\operatorname{Hom}_A(A/fA,M)\in D_{Coh}(A/fA)$.
        \item\label{f!f*:cone} $R\operatorname{Hom}_A(\operatorname{Cone}(A\xrightarrow{\times f}A),M)\in D_{Coh}(A/fA)$.
        \item\label{f!f*:f*} $M\in D_{Coh}(A)$.
    \end{enumerate}
    Then \ref{f!f*:f!} implies \ref{f!f*:cone},
    and if $A$ is $f$-adically complete and $M$ is derived $f$-adically complete, then
    \ref{f!f*:cone} implies \ref{f!f*:f*}.
\end{Lem}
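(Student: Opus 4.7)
The plan is to exploit the canonical truncation of the two-term Koszul-type complex $C = \operatorname{Cone}(A \xrightarrow{\times f} A)$. Direct computation gives $H^{-1}(C) = A[f]$ (the $f$-torsion ideal) and $H^0(C) = A/fA$, producing a distinguished triangle $A[f][1] \to C \to A/fA \to +1$. For \ref{f!f*:f!} $\Rightarrow$ \ref{f!f*:cone}, apply $R\operatorname{Hom}_A(-, M)$ to this triangle; since $D_{Coh}(A/fA)$ is closed under extensions, it suffices to check $R\operatorname{Hom}_A(A[f], M) \in D_{Coh}(A/fA)$. The closed-immersion adjunction \citestacks{0A6A} identifies this with $R\operatorname{Hom}_{A/fA}(A[f], R\operatorname{Hom}_A(A/fA, M))$. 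The second argument lies in $D^+_{Coh}(A/fA)$ by \ref{f!f*:f!} and boundedness of $M$, while $A[f]$ is a finite (hence pseudo-coherent) $A/fA$-module, so the result sits in $D^+_{Coh}(A/fA) \subset D_{Coh}(A/fA)$.

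For \ref{f!f*:cone} $\Rightarrow$ \ref{f!f*:f*}, I first rephrase \ref{f!f*:cone}: $C$ is perfect with $C^\vee \cong C[-1]$, so $R\operatorname{Hom}_A(C, M) \cong (M \otimes^L_A C)[-1]$, and \ref{f!f*:cone} is equivalent to $\operatorname{Cone}(M \xrightarrow{\times f} M) = M \otimes^L_A C \in D_{Coh}(A/fA)$. The long exact sequence of the triangle $M \xrightarrow{\times f} M \to M \otimes^L_A C \to +1$ yields short exact sequences
\[
0 \to H^i(M)/f H^i(M) \to H^i(M \otimes^L_A C) \to H^{i+1}(M)[f] \to 0,
\]
so each $H^i(M)/f H^i(M)$ is a finite $A/fA$-module, as a submodule of one over the Noetherian ring $A/fA$.

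It remains to show $N := H^i(M)$ is finite over $A$ for each $i$. Each $N$ is derived $f$-complete by \citestacks{091U}. The crux is a derived-Nakayama argument: lift generators of $N/fN$ to $x_1, \ldots, x_k \in N$, set $P = \sum_j A x_j$ and $Q = N/P$, so $Q = fQ$. The hypothesis that $A$ is Noetherian and $f$-adically complete makes the finite module $P$ classically (hence derived) $f$-complete, and then \citestacks{091V} forces $Q$ to be derived $f$-complete. Given $q \in Q$, the surjectivity $Q = fQ$ provides a compatible tower $q = q_0, q_1, \ldots$ with $fq_{n+1} = q_n$, and hence a nonzero $A$-linear map $A_f \to Q$ sending $1 \mapsto q$ whenever $q \neq 0$; this contradicts $\operatorname{Hom}_A(A_f, Q) = 0$, which is part of derived $f$-completeness of $Q$. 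Hence $Q = 0$ and $N$ is finite. Applied to the finitely many nonzero cohomology modules of $M$, this yields $M \in D^b_{Coh}(A) \subset D_{Coh}(A)$.

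The main obstacle is this last derived-Nakayama step: one must pass from $N/fN$ finite to $N$ finite using only derived, not classical, $f$-completeness of $N$. The $A_f$-map construction substitutes for classical Nakayama, and the $f$-adic completeness of $A$ enters precisely to guarantee that the finitely generated submodule $P$ is derived $f$-complete, so that $Q$ inherits the property via \citestacks{091V}.
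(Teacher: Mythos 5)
Your argument is correct, but it takes a genuinely different route through the main implication (ii)$\Rightarrow$(iii). The paper argues at the level of the complex: since $H^b(M)/fH^b(M)$ is finite for the top degree $b$, it picks a finite free $F$ and a map $F[-b]\to M$ surjective modulo $f$, uses derived Nakayama for derived complete objects (\citestacks{09B9}) to upgrade this to surjectivity on $H^b$, replaces $M$ by $\operatorname{Cone}(F[-b]\to M)\in D^{\leq b-1}(A)$, and descends inductively, the induction terminating because $M$ is bounded. You instead invoke the fact that derived completeness can be tested on cohomology modules, reducing to the module statement that a derived complete $N$ with $N/fN$ finite is finite, which you prove by hand via $Q=N/P$, $Q=fQ$, derived completeness of $Q$ (using that $P$ is classically, hence derived, complete because $A$ is Noetherian and $f$-adically complete), and the tower construction of a nonzero map $A_f\to Q$ contradicting $\operatorname{Hom}_A(A_f,Q)=0$. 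Both proofs hinge on the same derived-Nakayama principle; yours is more modular and isolates a reusable module-level lemma, at the price of the extra structural input that each $H^n$ of a derived complete complex is derived complete, which the paper's complex-level induction avoids. Your handling of (i)$\Rightarrow$(ii) via the truncation triangle $A[f][1]\to C\to A/fA$ is also slightly more careful than the paper's, which feeds $X=C$ into the adjunction as if $C$ were an object of $D^b_{Coh}(A/fA)$; your version only needs that the cohomologies of $C$ are finite $A/fA$-modules. One bookkeeping point: the adjunction $R\operatorname{Hom}_A(X,M)=R\operatorname{Hom}_{A/fA}(X,R\operatorname{Hom}_A(A/fA,M))$ is not \citestacks{0A6A} (that tag concerns localization of Hom's), and the tags you cite for derived completeness of cohomology modules and for the two-out-of-three property should be double-checked, though the facts themselves are standard and correct.
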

\begin{proof}
Note that $R\operatorname{Hom}_A(A/fA,M)\in D^+(A/fA)$ since $M\in D^+(A)$.
    If \ref{f!f*:f!} holds, then $R\operatorname{Hom}_A(A/fA,M)\in D^+_{Coh}(A/fA)$, so for any $X\in D^b_{Coh}(A/fA)$,
    \[
    R\operatorname{Hom}_A(X,M)=R\operatorname{Hom}_{A/fA}(X,R\operatorname{Hom}_A(A/fA,M))
    \] 
    is in $D_{Coh}(A/fA)$,
in particular this holds for $X=C:=\operatorname{Cone}(A\xrightarrow{\times f}A)$.
    Thus \ref{f!f*:cone} holds.

    Now assume \ref{f!f*:cone},
    and assume $A$ is $f$-adically complete and $M$ is derived $f$-adically complete.
    From the distinguished triangle 
    $A\xrightarrow{\times f} A\to C\to +1$
    We know $R\operatorname{Hom}_A(C,M)\cong \operatorname{Cone}(M\xrightarrow{\times f}M)[-1]$,
    so $\operatorname{Cone}(M\xrightarrow{\times f}M)\in D_{Coh}(A/fA)$.
    Let $b$ be an integer such that  $M\in D^{\leq b}(A)$.
    Then $H^b(M)/fH^b(M)=H^b(\operatorname{Cone}(M\xrightarrow{\times f}M))$ is finite.
    Thus there exists a finite free $A$-module $F$ and a map $F[-b]\to M$ in $D(A)$ inducing a surjective map on $H^b(-)/fH^b(-)$.
    Since $F[-b]$ is also derived $f$-adically complete \citestacks{091T},
    we see $F[-b]\to M$ induces a surjective map on $H^b(-)$ by \citestacks{09B9}.
    Thus $\operatorname{Cone}(F[-b]\to M)\in D^{\leq b-1}(A)$,
    and we see inductively $H^c(M)$ is finite for all $c$.
\end{proof}

The following two lemmas are not used in the case $A$, or equivalently $A/I$, is finite-dimensional.

\begin{Lem}[{\cite[Lemma 6.4.3.7 and Proposition 6.6.4.6]{SAG}}]\label{lem:nilReflect}
    Let $A$ be a Noetherian ring, $I$ a nilpotent ideal of $A$.
    
    Let $K\in D^+(A)$, $L=R\operatorname{Hom}_{A}(A/I,K)$.
    Then the followings hold.
    \begin{enumerate}[label=$(\roman*)$]
        \item\label{nil:Coh} If $L\in D_{Coh}(A/I)$, then $K\in D_{Coh}(A)$.
        \item\label{nil:Dual} If $L$ is a pseudo-dualizing complex for $A/I$,
        then $K$ is a pseudo-dualizing complex for $A$.
    \end{enumerate}
\end{Lem}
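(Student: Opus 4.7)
My approach is to perform dévissage along the nilpotent filtration $A\supset I\supset I^2\supset\cdots\supset I^n=0$, where $n$ is the nilpotency index of $I$ (finite since $A$ is Noetherian, so $I$ is finitely generated). The key reduction is that for any $A$-module $M$ annihilated by $I$, the restriction/coinduction adjunction gives
\[R\operatorname{Hom}_A(M,K)\;=\;R\operatorname{Hom}_{A/I}\bigl(M,\,R\operatorname{Hom}_A(A/I,K)\bigr)\;=\;R\operatorname{Hom}_{A/I}(M,L),\]
and each successive quotient $I^k/I^{k+1}$ is a finite $A/I$-module. I will inductively establish that $K_k:=R\operatorname{Hom}_A(A/I^k,K)$ has the desired property for $k=1,\dots,n$; since $K_1=L$ is given and $K_n=K$, this finishes.

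For part \ref{nil:Coh}, the short exact sequences $0\to I^k/I^{k+1}\to A/I^{k+1}\to A/I^k\to 0$ yield distinguished triangles
\[K_k\;\longrightarrow\;K_{k+1}\;\longrightarrow\;R\operatorname{Hom}_{A/I}(I^k/I^{k+1},L)\;\longrightarrow\;+1,\]
whose third term lies in $D_{Coh}(A/I)\subseteq D_{Coh}(A)$ since $I^k/I^{k+1}$ is a finite $A/I$-module and $L\in D_{Coh}(A/I)$. Induction starting from $K_1=L$ delivers $K\in D_{Coh}(A)$.

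For part \ref{nil:Dual}, the same triangle is used, but I now also need boundedness. Here the crucial input is Corollary \ref{cor:DualizingFunctorGood}: since $L$ is pseudo-dualizing, $R\operatorname{Hom}_{A/I}(-,L)$ maps $D^b_{Coh}(A/I)$ into itself, so the third term lies in $D^b_{Coh}(A/I)$ and the induction yields $K\in D^b_{Coh}(A)$. To verify the local condition, fix $\fp\in\Spec(A)$ and let $\bar\fp=\fp/I\in\Spec(A/I)$ (every prime of $A$ contains the nilpotent $I$). Passing to $A_\fp$ via \citestacks{0A6A}, one finds $R\operatorname{Hom}_{A_\fp}(A_\fp/I_\fp,K_\fp)\cong L_{\bar\fp}$, which is dualizing for $A_\fp/I_\fp$ by hypothesis. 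Since $I_\fp$ is a nilpotent ideal in the local ring $A_\fp$, it lies in the Jacobson radical, so Lemma \ref{lem:radReflect} applied to $(A_\fp,I_\fp)$ shows that $K_\fp$ is pseudo-dualizing for $A_\fp$; as $A_\fp$ is finite-dimensional, this is the same as being dualizing by Remark \ref{rem:DualRemAfterDef}.

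The main obstacle is the boundedness step in part \ref{nil:Dual}: over an infinite-dimensional base, a pseudo-dualizing complex need not have globally bounded injective amplitude, so one cannot conclude $R\operatorname{Hom}_{A/I}(M,L)\in D^b$ for finite $M$ by elementary means. It is precisely Corollary \ref{cor:DualizingFunctorGood} that unlocks this, upgrading the plain coherent dévissage of \ref{nil:Coh} to the bounded-coherent dévissage needed for \ref{nil:Dual}.
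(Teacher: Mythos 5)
Your proof is correct, and it organizes the d\'evissage genuinely differently from the paper. For \ref{nil:Coh} the paper first reduces to $I=fA$ principal and then inducts on the lowest nonvanishing cohomological degree of $K$, using the exact sequences $0\to H^a(K)[f]\to H^a(K)[f^{n+1}]\xrightarrow{\times f} H^a(K)[f^n]$ together with a truncation triangle; you instead filter along $I\supseteq I^2\supseteq\cdots\supseteq I^n=0$ and pass through the complexes $K_k=R\operatorname{Hom}_A(A/I^k,K)$, which avoids the reduction to a principal ideal and treats all cohomological degrees at once (the only point worth making explicit is that your claim $R\operatorname{Hom}_{A/I}(I^k/I^{k+1},L)\in D_{Coh}(A/I)$ uses $L\in D^{+}_{Coh}(A/I)$, which indeed holds because $K\in D^{+}(A)$). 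For \ref{nil:Dual} the endgame is the same---localize at $\fp$ and apply Lemma \ref{lem:radReflect}, noting $I_\fp$ lies in the Jacobson radical---but boundedness of $K$ is obtained differently: the paper transfers Gor-2 from $A/I$ to $A$, invokes Lemma \ref{lem:PseuDualBdd} to reduce to the local case, and there deduces boundedness from finiteness of injective dimension, whereas you get $K\in D^{b}_{Coh}(A)$ directly from the finitely many triangles, with Corollary \ref{cor:DualizingFunctorGood} guaranteeing that each third term stays in $D^{b}_{Coh}(A/I)$. This is a legitimate shortcut and creates no circularity, since Corollary \ref{cor:DualizingFunctorGood} is established in \S\ref{sec:PesudoDual} before this lemma; the constructibility input behind Lemma \ref{lem:PseuDualBdd} still enters your argument, but only implicitly through that corollary. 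In exchange, the paper's treatment of \ref{nil:Coh} is a bare-hands module argument needing only $L\in D_{Coh}$, while your single filtration handles coherence and boundedness uniformly and is arguably closer to the $\operatorname{SAG}$-style deformation-theoretic picture the paper cites.
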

\begin{proof}
    For \ref{nil:Coh}, we may assume $I=fA$ principal.
    Let $a$ be the smallest integer such that $H^a(K)\neq 0$.
    Then $H^a(K)[f]=H^a(L)$ is finite.
    The exact sequences
    $0\to H^a(K)[f]\to H^a(K)[f^{n+1}]\xrightarrow{\times f} H^a(K)[f^n]$
    tells us $H^a(K)$ is finite.
    Apply dimension shifting to the  distinguished triangle 
    \[
    H^a(K)[-a]\to K\to \tau_{>a}K\to +1
    \]
    we see inductively $H^c(K)$ is finite for all $c\in\bZ$.

    For \ref{nil:Dual}, from Corollary \ref{cor:DualFGAlgGor2} we know $A/I$ is Gor-2, so $A$ is Gor-2 by definition, since $I$ is nilpotent.
    By Lemma \ref{lem:PseuDualBdd}, it suffices to show \ref{nil:Dual} when $(A,\fm,k)$ is local.
    Then $$R\operatorname{Hom}_{A}(k,K)=R\operatorname{Hom}_{A/I}(k,R\operatorname{Hom}_{A}(A/I,K))$$
    is a shift of $k$,
    so $K$ has finite injective dimension \citestacks{0AVJ},
    in particular bounded.
    Since $K\in D_{Coh}(A)$ by \ref{nil:Coh}, \ref{nil:Dual} follows from Lemma \ref{lem:radReflect}.
\end{proof}

\begin{Lem}\label{lem:coprofEstimate}
    Let $A$ be a Noetherian ring,
    $\fp\in\Spec(A),f\in \fp$.
    Assume that $\Spec(A/\fp)$ contains a nonempty Cohen-Macaulay open subscheme.
    Then there exists $g\not\in\fp$ such that for all $\fq\in V(\fp)\cap D(g),$
    \[
    \HT(\fq/f^nA)-\depth \left(A_\fq/f^nA_\fq\right)\leq \HT(\fp)
    \]
    for all $n\in\bZ_{\geq 1}$.
\end{Lem}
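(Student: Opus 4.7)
Write $r := \HT(\fq/\fp)$. The plan is to split the desired inequality into a height bound, $\HT(\fq/f^n A) \leq \HT(\fp) + r$, and a depth bound, $\depth(A_\fq/f^n A_\fq) \geq r$; these combine to give the conclusion. The height bound is unconditional: since $\HT(\fq/f^nA)=\HT(\fq/fA)$ topologically and $fA \subseteq \fp \subseteq \fq$, any maximal chain of primes containing $f$ from a minimal prime $\fp_0$ over $f$ up to $\fq$ can be refined by inserting $\fp$, yielding length $\leq \HT(\fp/\fp_0)+\HT(\fq/\fp) \leq \HT(\fp)+r$.

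For the depth bound, I would first use the Cohen--Macaulay openness hypothesis to shrink $g$ so that $(A/\fp)_g$ is Cohen--Macaulay; then $A_\fq/\fp A_\fq$ is Cohen--Macaulay of dimension $r$ for each $\fq \in V(\fp) \cap D(g)$. By Brodmann's theorem, $\bigcup_n \Ass_A(A/f^n A)$ is finite, and more generally $\bigcup_n \Ass_A(A/(y_1, \ldots, y_j, f^n))$ is finite for any sequence $y_i \in A$. By prime avoidance, shrink $g$ further so that every prime in these Brodmann sets that strictly contains $\fp$ lies outside $D(g)$. Then, for each $\fq \in V(\fp) \cap D(g)$ with $r \geq 1$, I would construct $x_1, \ldots, x_r \in \fq$ inductively, lifting a system of parameters from the CM quotient $A_\fq/\fp A_\fq$, and at each step $i$ using prime avoidance to pick $x_i \in \fq \setminus \fp$ outside the minimal primes of the CM ring $A_\fq/(\fp, x_1, \ldots, x_{i-1})A_\fq$ and outside the finitely many bad primes in the iterated Brodmann set contained strictly in $\fq$. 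Such $x_i$'s form a regular sequence on $A_\fq/f^n A_\fq$ uniformly in $n$, giving $\depth(A_\fq/f^n A_\fq) \geq r$.

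\textbf{Main obstacle.} The delicate point is ensuring, throughout the induction, that $\fq$ itself is not an embedded associated prime of any intermediate quotient $A_\fq/(x_1, \ldots, x_{i-1}, f^n)A_\fq$; otherwise prime avoidance fails at step $i+1$. Since the iterated Brodmann sets depend on the choice of the $x_i$'s, some care is required in the uniform choice of $g$. By drawing the $x_i$'s from a finite pool of lifts of a fixed system of parameters of $A_g/\fp A_g$ (modified by elements of $\fp$), the total set of bad primes strictly above $\fp$ arising across all inductive branches is finite, and a single $g$ chosen in advance --- excluding all such primes from $D(g)$ --- works uniformly for all $\fq$. Combining the resulting depth bound with the height bound then yields the inequality.
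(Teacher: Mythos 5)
Your height bound is where the proposal breaks. The step ``any maximal chain of primes containing $f$ from a minimal prime $\fp_0$ over $f$ up to $\fq$ can be refined by inserting $\fp$'' is invalid: such a chain typically runs through primes incomparable with $\fp$, so there is no way to insert $\fp$ into it. Worse, the inequality you claim is \emph{unconditional}, namely $\HT(\fq/f^nA)\leq \HT(\fp)+\HT(\fq/\fp)$ for all $\fq\in V(\fp)$, is actually false: take a Nagata-type non-catenary Noetherian local domain $(A,\fm)$ of dimension $n\geq 4$ with a saturated chain $0\subset\fp\subset\fm$, so $\HT(\fp)=\HT(\fm/\fp)=1$; here $A/\fp$ is a one-dimensional local domain, whose generic point is a nonempty Cohen--Macaulay open, so the lemma's hypothesis holds, yet for any nonzero $f\in\fp$ and $\fq=\fm$ one has $\HT(\fm/f^nA)=\dim(A/fA)=n-1\geq 3>2=\HT(\fp)+\HT(\fm/\fp)$. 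So the height estimate is not free of charge; it only holds after shrinking to a suitable $D(g)$, where it comes from a genuine input (constructibility/generic catenarity along $V(\fp)$, e.g.\ \cite[Proposition 6.10.6]{EGA4_2}, giving $\HT(\fq)=\HT(\fp)+\HT(\fq/\fp)$ on $V(\fp)\cap D(g)$) --- exactly the step the paper performs and your argument declares unnecessary.

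The depth bound also has an unresolved gap, and it is the one you yourself flag as the main obstacle: the ``finite pool of lifts of a fixed system of parameters of $A_g/\fp A_g$'' does not exist as described. The ring $A_g/\fp A_g$ is not local and has no system of parameters; what you need is, for \emph{every} $\fq\in V(\fp)\cap D(g)$, a sequence in $\fq$ of length $\HT(\fq/\fp)$, and these lengths are unbounded in the generality the lemma is used for (possibly infinite-dimensional $A$), so no finite pool chosen in advance can serve all $\fq$. Moreover ``modified by elements of $\fp$'' reopens infinitely many candidate tuples, while Brodmann's theorem only makes $\bigcup_n\Ass_A(A/(y_1,\ldots,y_j,f^n)A)$ finite for each \emph{fixed} tuple; hence the asserted finiteness of the total set of bad primes, and with it the advance choice of $g$, is unsubstantiated, and nothing rules out $\fq$ itself being associated to an intermediate quotient, which is precisely where the induction dies. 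For comparison, the paper avoids both difficulties: setting $J=A[f^\infty]=A[f^N]$, the finitely many exponents $n<N$ are handled directly by constructibility of the defect function, while for $n\geq N$ the exact sequence $0\to J\to A/f^nA\to A/(f^nA+J)\to 0$ plus generic, $n$-independent depth bounds for $J$ and $A/J$ along $V(\fp)$ give $\depth(A_\fq/f^nA_\fq)\geq\HT(\fq/\fp)$, and the height side is simply $\HT(\fq/f^nA)\leq\HT(\fq)$ combined with the generic equality $\HT(\fq)=\HT(\fp)+\HT(\fq/\fp)$.
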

\begin{proof}
    Let $N\in\bZ_{\geq 1}$ be such that $J:=A[f^\infty]=A[f^N]$.
    After replacing $A$ by some $A_g$, we may assume $A/\fp$ Cohen-Macaulay and
    our inequality holds for all $\fq\in V(\fp)$ and all $n< N$ by \cite[Proposition 6.10.6]{EGA4_2}.
    By the same proposition may also assume that, for all $\fq\in V(\fp)$,
        $\depth J_\fq\geq \HT(\fq/\fp)$ and 
        $\depth (A_\fq/J_\fq)\geq \HT(\fq/\fp)+1$.
    Since $f$ is a nonzerodivisor on $A/J$ we have $\depth (A_\fq/(f^nA+J)_\fq)\geq \HT(\fq/\fp)$ for all $n\in\bZ_{\geq 1}$.

    For $n\geq N$, the sequence
    \[\begin{CD}
        0@>>> J@>>> A/f^nA @>>> A/(f^nA+J)@>>> 0
    \end{CD}\]
    is exact.
    Thus the depth inequalities above imply
    $\depth (A_\fq/f^nA_\fq)\geq \HT(\fq/\fp)$ for all $n\in\bZ_{\geq N}$.
    Now for all $n\geq N$ we have
    $ \HT(\fq/f^nA)-\depth \left(A_\fq/f^nA_\fq\right)\leq \HT(\fq)-\HT(\fq/\fp)$.
    The right hand side equals $\HT(\fp)$ after localization by \cite[Proposition 6.10.6]{EGA4_2}.
\end{proof}

\begin{proof}[Proof of Theorem \ref{thm:GLOBALdeform}]
A pseudo-dualizing complex is a dualizing complex if and only if the ring is finite-dimensional, see Remark \ref{rem:DualRemAfterDef}.
Since $I$ is in the Jacobson radical of $A$,
$A$ is finite-dimensional if and only if $A/I$ is.
Thus it suffices to prove the result for pseudo-dualizing complexes.

We may assume $I=fA$ principal. Let $A_n=A/f^nA$.

    Let $(J_1^\bullet,d_1^\bullet)$ be a bounded below complex of injective $A_1$-modules that represents $K_1$.
    Fix $a\in\bZ$ so that $J_1^m=0$ for all $m<a$. 
    Further, fix $c_0\in\bZ_{\geq a}$ such that for all minimal primes $\fq$ of $fA$,
    there exists $c\leq c_0$ such that 
    $H^c(J_1^\bullet)_\fq\neq 0$.
    
    For each $m$, 
    let $J^m$ be an injective hull of $J_1^m$ as an $A$-module.
    Thus $J^m$ is an essential extension of $J_1^m$ and $J_1^m=J^m[f]$.
    Let $J^m_\infty=J^m[f^\infty]$.
    Then $J^m_\infty$ is an injective $A$-module by \citestacks{08XW}.

\begin{Clm}
\label{clm:extendd}
    There exist maps $d^m_\infty:J^m_\infty\to J^{m+1}_\infty$ extending $d^m_1$
    such that $(J^\bullet_\infty,d^\bullet_\infty)$ is a complex.
\end{Clm}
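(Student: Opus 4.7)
The plan is to construct the differentials $d^m_\infty$ inductively along the $f$-power torsion filtration $F_n J^m := J^m_\infty[f^n]$. One has $F_1 J^m = J_1^m$, $J^m_\infty = \bigcup_n F_n J^m$, and each $F_n J^m$ is injective as an $A/f^n A$-module by the same argument used for $J^m_\infty$ (any lifting problem for $A/f^n$-modules solves over $A$ by injectivity of $J^m_\infty$, and the image stays in $F_n J^m$ by $f^n$-torsion). I will inductively produce complexes $(F_n J^\bullet, d^\bullet_n)$ of $A/f^n A$-injective modules with $d^\bullet_{n+1}$ extending $d^\bullet_n$, and then set $d^m_\infty := \bigcup_n d^m_n$. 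The base case $n=1$ is the given complex $(J_1^\bullet, d_1^\bullet)$.

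For the step from $n$ to $n+1$, I would first extend each $d^m_n$ to some $\tilde d^m_{n+1}: F_{n+1} J^m \to F_{n+1} J^{m+1}$ using injectivity of $F_{n+1} J^{m+1}$ as an $A/f^{n+1}A$-module. Since $J^m_\infty$ is $f$-divisible, multiplication by $f^n$ induces an isomorphism $F_{n+1}/F_n \xrightarrow{\sim} J_1^m$. The composition $\tilde d^{m+1}_{n+1} \tilde d^m_{n+1}$ vanishes on $F_n$ by the inductive hypothesis, so it factors through this quotient to yield a map $\eta^m: J_1^m \to F_{n+1} J^{m+2}$; a short check using $fx \in F_n$ for $x \in F_{n+1}$ gives $f\cdot \eta^m = 0$, so in fact $\eta^m$ lands in $J_1^{m+2}$.

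The family $(\eta^m)_m$ is thus a degree-zero cochain in the Hom-complex $\operatorname{Hom}^\bullet_{A_1}(J_1^\bullet, J_1^\bullet[2])$, and associativity of triple compositions forces it to be a cocycle. Its cohomology class represents an element of $H^2(R\operatorname{Hom}_{A_1}(K_1, K_1))$. Since $K_1$ is pseudo-dualizing, Corollary \ref{cor:DualizingFunctorGood} gives $R\operatorname{Hom}_{A_1}(K_1, K_1) \cong A_1$, so this class vanishes. Hence there exist $\beta^m: J_1^m \to J_1^{m+1}$ with $\eta^m = d_1^{m+1}\beta^m + \beta^{m+1} d_1^m$, and setting $d^m_{n+1} := \tilde d^m_{n+1} - \beta^m \circ (\times f^n)$ produces the required complex structure on $F_{n+1}$; a direct expansion of $d^{m+1}_{n+1} d^m_{n+1}$ on $F_{n+1}$ then reduces to the coboundary equation (the cross-term $\beta^{m+1}\circ f^n\circ \beta^m \circ f^n$ drops out because $\beta^m$ takes values in $J_1^{m+1}$, which is killed by $f$).

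The main obstacle is isolating the obstruction to the complex condition as a well-defined cohomology class in $\operatorname{Ext}^2_{A_1}(K_1, K_1)$ and invoking the pseudo-dualizing hypothesis on $K_1$ to make it vanish; this is the only step where the hypothesis on $K_1$ is truly essential. The remaining ingredients---extending individual differentials via injectivity, identifying the factorization through the torsion quotient, and the final verification---are routine.
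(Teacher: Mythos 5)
Your overall strategy (extend the differentials step by step along the $f$-power torsion filtration using injectivity, view the failure of $d^2=0$ as an obstruction defined on the graded quotient, and kill it by a homotopy obtained from duality for $K_1$) is the same as the paper's, but there is a genuine gap at the key identification. You claim multiplication by $f^n$ induces an isomorphism $F_{n+1}J^m/F_nJ^m\xrightarrow{\sim}J_1^m$ because ``$J^m_\infty$ is $f$-divisible.'' Nothing in the theorem makes $f$ a nonzerodivisor (the reduction from a general ideal $I$ to a principal one produces arbitrary elements $f$), and for a zerodivisor $f$ the $f$-power torsion part of an injective hull need not be $f$-divisible. Concretely, take $A=k[x,y]/(x^2,xy,y^2)$, $f=x$, so $A_1=k[y]/(y^2)$, and $J_1^m=E_{A_1}(k)$; then $J^m=E_A(k)$, which is entirely $f$-power torsion, and one checks $E_A(k)[x]\not\subseteq xE_A(k)$, so multiplication by $f$ sends $F_2J^m/F_1J^m$ (which is $1$-dimensional) onto a proper submodule of the $2$-dimensional $J_1^m$. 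The correct identification, and the one the paper uses, is $F_{n+1}J^m/F_nJ^m\cong\operatorname{Hom}_{A_{n+1}}(f^nA_{n+1},F_{n+1}J^m)\cong\operatorname{Hom}_{A_1}(f^nA_{n+1},J_1^m)$, so the graded quotient complex represents $R\operatorname{Hom}_{A_1}(f^nA_{n+1},K_1)$, not $K_1$ itself.

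Consequently the obstruction is not a class in $\operatorname{Ext}^2_{A_1}(K_1,K_1)$; it is the homotopy class of a map of complexes $F_{n+1}J^\bullet/F_nJ^\bullet\to J_1^{\bullet+2}$, i.e.\ an element of $\operatorname{Hom}_{D(A_1)}\bigl(R\operatorname{Hom}_{A_1}(f^nA_{n+1},K_1),K_1[2]\bigr)$, and this is exactly where the paper's proof does its real work: by Corollary \ref{cor:DualizingFunctorGood} applied to the module $f^nA_{n+1}$ (full biduality, not just $R\operatorname{Hom}_{A_1}(K_1,K_1)\cong A_1$) this group is $H^2(f^nA_{n+1})=0$, and the resulting null-homotopy corrects the chosen extensions exactly as in your last step. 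You cannot easily salvage your version by extending $\eta^m$ from the image $f^nF_{n+1}J^m\subseteq J_1^m$ to all of $J_1^m$ by injectivity, since the extended family need not commute with the differentials, so no class in $\operatorname{Ext}^2_{A_1}(K_1,K_1)$ is defined. (The paper iterates using that $J_n^\bullet$ is again pseudo-dualizing over $A_n$ via Lemma \ref{lem:nilReflect}; with the corrected identification your induction also works with duality over $A_1$ at every stage, since $f^nA_{n+1}$ is killed by $f$. The remaining ingredients of your argument---injectivity of $J^m[f^n]$ over $A/f^nA$, well-definedness of the correction $d_{n+1}=\tilde d_{n+1}-\beta\circ(\times f^n)$, and the vanishing cross term---are fine.)
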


    The proof is given after the main argument.
    Granting Claim \ref{clm:extendd}, $J^\bullet_\infty$ is now a bounded below complex of $f^\infty$-torsion injective $A$-modules.
    We next show that $J^\bullet_\infty\in D(A)$ is bounded.
    Note that if $\dim A_1$ is finite,
    then $K_1$ has finite injective dimension,
    so we could choose $J_1^\bullet$ so that $J_1^m=0$ for $m\gg 1$,
    and $J_\infty$ is automatically bounded.
    Without this assumption,
    writing $J^m_n=J^m[f^n]$, 
    we have that $R\operatorname{Hom}_{A_n}(A_1,J^\bullet_n)=J^\bullet_n[f]=J^\bullet_1\in D(A_1)$, since $J^\bullet_n$ is a bounded below complex of injectives.
    Thus $J^\bullet_n$ is a pseudo-dualizing complex for $A_n$ by Lemma \ref{lem:nilReflect}.
    Let $\fq$ be a minimal prime of $fA$.
    Then $(J^\bullet_n)_\fq$ is exact except at a single degree $c$,
    and applying $R\operatorname{Hom}_{A_\fq/f^n A_\fq}(A_\fq/f A_\fq,-)$ we see $(J^\bullet_1)_\fq$ is exact except at  degree $c$,
    so $c\leq c_0$.
    Thus
    for all $\fp\in V(fA)$, $H^c(J^\bullet_n)_\fp\neq 0$ for some $c\leq c_0$.
    
    Note that
    $A/fA$ is Gor-2 by Corollary \ref{cor:DualFGAlgGor2},
    so Lemma \ref{lem:coprofEstimate} shows that there exists $b\in\bZ_{\geq 1}$ such that for all $\fp\in V(fA)$ and all $n\in\bZ_{\geq 1}$,
    \[
     \HT(\fp/f^nA)-\depth \left(A_\fp/f^nA_\fp\right)\leq b.
    \]
    Therefore $(J^\bullet_n)_\fp\in D^{[a,b+c_0]}(A)$,
    so $(J^\bullet_\infty)_\fp\in D^{[a,b+c_0]}(A)$ for all $\fp\in V(fA)$.
    Since $f$ is in the Jacobson radical of $A$ we have $J^\bullet_\infty\in D^{[a,b+c_0]}(A)$.
    
    Let $K\in D(A)$ be the derived $f$-adic completion of $J^\bullet_\infty$,
    so $K\in D^b(A)$ by for example \citestacks{091Z}.
    By \citestacks{0A6Y} (and \citetwostacks{091T}{0A6R}) we have
    \[
    R\operatorname{Hom}_A(A/fA,K)=R\operatorname{Hom}_A(A/fA,J^\bullet_\infty),
    \]
    and the right hand side is just $J^\bullet_\infty[f]=J^\bullet_1$ since $J^\bullet_\infty$ is a bounded below complex of injectives.
    Thus $K\in D^b_{Coh}(A)$ by Lemma \ref{lem:f!Cohf*Coh}.
    We conclude that $K$ is a pseudo-dualizing complex for $A$ by Lemma \ref{lem:radReflect}.
\end{proof}
\begin{proof}[Proof of Claim \ref{clm:extendd}]
    We first show that there exist maps $d^m_2:J^m_2\to J^{m+1}_2$ extending $d^m_1$
    such that $(J^\bullet_2,d^\bullet_2)$ is a complex. 
    This follows from the dual version of \citestacks{0DYR}.
    We give the proof in our case for the reader's convenience.

    Let $\delta^m_2:J^m_2\to J^{m+1}_2$ 
    be arbitrary maps extending $d^m_1$.
    Composing, we get maps $\delta^{m+1}_2\circ\delta^m_2:J^m_2\to J^{m+2}_2$.
    Since $(J^\bullet_1,d^\bullet_1)$ is a complex,
    $\delta^{m+1}_2\circ\delta^m_2$ is zero on $J^m_1=J^m_2[f]$.
    Since $f^2=0\in A_2$, we have $fJ^m_2\subseteq J^m_1$,
    so $\operatorname{im}(\delta^{m+1}_2\circ\delta^m_2)\subseteq J^{m+2}_1.$
    This tells us $(J^\bullet_2/J^\bullet_1,\delta^\bullet_2)$ is a complex,
    and that $\delta^{m+1}_2\circ\delta^m_2$
    induces a map $J^m_2/J^m_1\to J^{m+2}_1$.
    It is clear that
    $\delta^{\bullet+1}_2\circ\delta^\bullet_2:J^\bullet_2/J^\bullet_1\to J^{\bullet+2}_1$
    is a map of complexes.

    Since $J^m_2$ is injective,
    we have canonical isomorphisms \[J^m_2/J^m_1=\operatorname{Hom}_{A_2}(fA_2,J^m_2)=\operatorname{Hom}_{A_1}(fA_2,J^m_1).\]
    Therefore $J^\bullet_2/J^\bullet_1$ represents $R\operatorname{Hom}_{A_1}(fA_2,J^\bullet_1)$,
    so $R\operatorname{Hom}_{A_1}(J^\bullet_2/J^\bullet_1,J^{\bullet+2}_1)=R\operatorname{Hom}_{A_1}(R\operatorname{Hom}_{A_1}(fA_2,J^\bullet_1),J^\bullet_1[2])=fA_2[2]$
    by Corollary \ref{cor:DualizingFunctorGood}.
    Thus $\delta^{\bullet+1}_2\circ\delta^\bullet_2:J^\bullet_2/J^\bullet_1\to J^{\bullet+2}_1$
    is zero in $D(A_1),$
    hence homotopic to zero (see \citestacks{05TG}).
    Let $g^\bullet_2:J^\bullet_2/J^\bullet_1\to J^{\bullet+1}_1$
    be a homotopy between $\delta^{\bullet+1}_2\circ\delta^\bullet_2$ and $0$.
    View each $g^m_2$ as a map $g^m_2:J^m_2\to J^{m+1}_2$,
    we see $d^m_2=\delta^m_2-g^m_2$
    is what we want.

    Now $J^\bullet_2$ is a bounded below complex of injective $A_2$-modules,
    so \[R\operatorname{Hom}_{A_2}(A_1,J^\bullet_2)=J^\bullet_2[f]=J^\bullet_1,\]
    thus $J^\bullet_2$ is a pseudo-dualizing complex for $A_2$ by Lemma \ref{lem:nilReflect}.
    The same argument as above tells us we can extend $d^\bullet_2$ to $d^\bullet_3$, ad infinitum, showing Claim \ref{clm:extendd}.
\end{proof}

\begin{Rem}
    We record the dual version of \citestacks{0DYR} in our mind for the reader's convenience:
    for a ring $A$ and an ideal $I$ with $I^2=0$,
    the obstruction for a $K\in D^+(A/I)$ to be of the form $R\operatorname{Hom}_A(A/I,K')$
    is a map $R\operatorname{Hom}_{A/I}(I,K)\to K[2]$ in $D(A/I)$.
    The author does not know a reference for this.
    
    For pseudo-dualizing complexes, this obstruction vanishes automatically by Corollary \ref{cor:DualizingFunctorGood}, as seen in the proof above.
    Alternatively, one can use the argument as in \cite[Lemma 6.6.4.9]{SAG},
    if willing to use animated rings.
\end{Rem}

\section{Openness of loci}\label{sec:Open}

For basics about canonical modules refer to \cite[\S 1]{Aoyama-CanMod}.

\begin{Lem}\label{lem:DualOpenInVp}
    Let $A$ be a Noetherian ring, $\fp\in\Spec(A)$.
    Assume $\Spec(A/\fp)$ contains a nonempty Gorenstein open subset.
    Then the followings hold.
    \begin{enumerate}[label=$(\roman*)$]

        \item\label{dualopenInVp:spreads} Let $K\in D^b_{Coh}(A)$.
        If $K_\fp$ is a dualizing complex for $A_\fp$,
         then there exists an $f\in A\setminus\fp$ such that $K_\fq$ is a dualizing complex for $A_\fq$ for all $\fq\in V(\fp)\cap D(f)$.


        \item\label{dualopenInVp:H0spreads} Let $M$ be a finite $A$-module.
        If $M_\fp$ is a canonical module for $A_\fp$,
         then there exists an $f\in A\setminus\fp$ such that $M_\fq$ is a canonical module for $A_\fq$ for all $\fq\in V(\fp)\cap D(f)$.
    \end{enumerate}
\end{Lem}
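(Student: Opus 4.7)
The plan is to deduce (ii) from (i) and prove (i) by spreading out the criterion of Corollary \ref{cor:CriterionForDual} via Lemma \ref{lem:0A6Aforiso}. For (ii), I apply (i) to $K = M$ regarded as a complex concentrated in degree $0$. Since $M_\fp$ is a canonical module, $M_\fp$ is a dualizing complex for $A_\fp$ sitting in one degree; part (i) then produces $f \notin \fp$ with $M_\fq$ a dualizing complex for $A_\fq$ on $V(\fp)\cap D(f)$. A dualizing complex concentrated in a single degree on a local ring forces that ring to be Cohen-Macaulay, and since $M_\fq[\dim A_\fq]$ is then a normalized dualizing complex, $M_\fq$ is a canonical module for $A_\fq$.

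For (i), I first invert some $g \notin \fp$ so that $A/\fp$ becomes Gorenstein; this is possible by hypothesis, because the generic point of $\Spec(A/\fp)$ corresponds to $\fp$ and hence lies in the given nonempty Gorenstein open subset. Since $K_\fp$ is dualizing, the map $A_\fp \to R\operatorname{Hom}_{A_\fp}(K_\fp, K_\fp)$ is an isomorphism; by Lemma \ref{lem:0A6Aforiso} this spreads out to $A_{f_1} \to R\operatorname{Hom}_{A_{f_1}}(K_{f_1}, K_{f_1})$ for some $f_1 \notin \fp$, giving the isomorphism $R\operatorname{Hom}_{A_\fq}(K_\fq, K_\fq) \cong A_\fq$ for every $\fq \in D(f_1)$. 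This handles one half of Corollary \ref{cor:CriterionForDual}.

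For the residue field half, I use the factorization $R\operatorname{Hom}_{A_\fq}(k(\fq), K_\fq) \cong R\operatorname{Hom}_{A_\fq/\fp A_\fq}(k(\fq), L_\fq)$, where $L := R\operatorname{Hom}_A(A/\fp, K) \in D^+(A/\fp)$, together with the Gorenstein-ness of $A_\fq/\fp A_\fq$ (which makes $A_\fq/\fp A_\fq$ a dualizing complex for itself up to shift). Since $K_\fp$ is dualizing, $L_\fp \cong (A/\fp)_\fp[-e]$ for some $e \in \bZ$. I apply Lemma \ref{lem:0A6Aforiso} to a sufficiently high truncation $\tau_{\leq N} L \in D^b_{Coh}(A/\fp)$ (which is bounded coherent even when $L$ itself is not) to spread the isomorphism to $\tau_{\leq N} L_{f_2} \cong (A/\fp)_{f_2}[-e]$ for some $f_2 \notin \fp$. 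For $\fq \in V(\fp) \cap D(f_1 f_2)$, the Gorenstein factorization then yields $R\operatorname{Hom}_{A_\fq}(k(\fq), K_\fq) \cong k(\fq)$ up to shift within the relevant truncation range, and Corollary \ref{cor:CriterionForDual}, combined with the iso on $R\operatorname{Hom}(K_\fq,K_\fq)$ already established, concludes that $K_\fq$ is a dualizing complex for $A_\fq$.

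The main obstacle is choosing the truncation level $N$ large enough for Corollary \ref{cor:CriterionForDual} to apply uniformly over $\fq \in V(\fp) \cap D(f_1 f_2)$: the criterion requires access to $\tau_{\leq d_\fq + 1}$ of the relevant $R\operatorname{Hom}$'s, where $d_\fq = \dim A_\fq$, and via catenarity $d_\fq = \dim A_\fp + \dim(A_\fq/\fp A_\fq)$. Bounding $d_\fq$ uniformly thus involves controlling $\dim(A_\fq/\fp A_\fq)$ on the neighborhood, which can be arranged after further localization using that $A/\fp$ is Gorenstein and hence admits a codimension function (Remark \ref{rem:DualRemAfterDef}). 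Once this finiteness is ensured, the argument above goes through as sketched.
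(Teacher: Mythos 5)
Your reduction of \ref{dualopenInVp:H0spreads} to \ref{dualopenInVp:spreads} rests on the claim that a canonical module of $A_\fp$ is a dualizing complex for $A_\fp$ concentrated in a single degree. That is true only when $A_\fp$ is Cohen--Macaulay. In the generality used here (Aoyama's), a canonical module of a local ring $(R,\fm)$ is a finite module whose completion is $\operatorname{Hom}_R(H^{\dim R}_{\fm}(R),E_R(R/\fm))$; it exists for many non-Cohen--Macaulay rings (even ones admitting no dualizing complex at all), and when $R$ does have a dualizing complex the canonical module is only its lowest nonvanishing cohomology, not the whole complex. So your argument establishes \ref{dualopenInVp:H0spreads} only in the Cohen--Macaulay case, whereas the lemma is needed beyond it: Corollary \ref{cor:Gor2impliesopen} uses exactly the non-CM (quasi-Gorenstein) content. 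The paper instead produces, after passing to the $\fp$-adic completion, a pseudo-dualizing complex $E$ via Theorem \ref{thm:GLOBALdeform} with $E_\fp\in D^{\geq 0}$ and $M_\fp\cong H^0(E_\fp)$, and spreads out the module isomorphism $M\cong H^0(E)$; no reduction to \ref{dualopenInVp:spreads} is attempted.

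For \ref{dualopenInVp:spreads}, the obstacle you flag at the end is fatal in the stated generality, and the proposed fix does not repair it. Corollary \ref{cor:CriterionForDual} at $\fq$ needs the truncation $\tau_{\leq \dim A_\fq+1}$ of $R\operatorname{Hom}(k(\fq),-)$ and a window of width about $2\dim A_\fq$ for $R\operatorname{Hom}(K_\fq,K_\fq)$, so spreading out truncated isomorphisms at one fixed level $N$ (which is all Lemma \ref{lem:0A6Aforiso} can give, since $R\operatorname{Hom}_A(K,K)$ and $R\operatorname{Hom}_A(A/\fp,K)$ are a priori only in $D^{+}_{Coh}$) certifies the criterion only at those $\fq\in V(\fp)\cap D(f)$ with $\dim A_\fq$ bounded in terms of $N$. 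A codimension function on $A/\fp$ does not bound $\HT(\fq/\fp)$ or $\dim A_\fq$ on any $V(\fp)\cap D(g)$: in Nagata's regular Noetherian domain of infinite Krull dimension take $\fp=(0)$; then $A/\fp=A$ is Gorenstein, yet every $D(g)$ contains maximal ideals of arbitrarily large height, so no single truncation level works, while the lemma is true there and must hold in this generality because it feeds into Theorem \ref{thm:DualOpen} for arbitrary Gor-2 rings. (Also, $A$ is not assumed catenary, so the identity $\dim A_\fq=\dim A_\fp+\dim(A_\fq/\fp A_\fq)$ you invoke is not available.) In the finite-dimensional case your sketch for \ref{dualopenInVp:spreads} could likely be completed with care about shifts, but not as stated in general. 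The paper's route avoids the level problem entirely: base change to the $\fp$-adic completion, invoke Theorem \ref{thm:GLOBALdeform} to get a pseudo-dualizing complex $E$ that is already dualizing at every prime, note $K_\fp\cong E_\fp$ up to shift, and spread out this single isomorphism of bounded coherent complexes by Lemma \ref{lem:0A6Aforiso}.
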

\begin{proof}
    We may assume $A/\fp$ Gorenstein.
    Let $B$ be the $\fp$-adic completion of $A$, so $A\to B$ is a flat ring map with $A/\fp=B/\fp B$,
    thus open subsets of $V(\fp)$ in $\Spec(A)$ are in one-to-one correspondence with open subsets of $V(\fp B)$ in $\Spec(B)$.
    Note that for all $\fP\in V(\fp)$,
    $A_\fP^\wedge=B_{\fP B}^\wedge$ since $A/\fp^n=B/\fp^n B$ for all $n$.
    Thus the base change of a dualizing complex for (resp. canonical module of)  $A_\fp$ to $B_{\fp B}$
    is  a dualizing complex for (resp. canonical module of) $B_{\fp B}$,
    and we may apply flat descent (\citestacks{0E4A} and \cite[Theorem 4.2]{Aoyama-CanMod})
    for $\fP\in V(\fp)$.
    Thus
    it suffices to prove the lemma in the case $A$ is $\fp$-adically complete and $A/\fp$ is Gorenstein.
    
    In this case, $A$ admits a pseudo-dualizing complex $E$ by Theorem \ref{thm:GLOBALdeform},
    and we have $K_\fp\cong E_\fp$ (resp. $M_\fp\cong H^0(E_\fp)$ and $E_\fp\in D^{\geq 0}(A_\fp)$; \cite[(1.5)]{Aoyama-CanMod})
    after a shift.
    After localizing we have $K\cong E$ by Lemma \ref{lem:0A6Aforiso} (resp. $M\cong H^0(E)$ and $E\in D^{\geq 0}(A)$),
    as desired. 
    \end{proof}

We say a finite module $M$ over a Noetherian ring $A$ a \emph{canonical module} if $M_\fp$ is a canonical module of $A$ for all $\fp\in \Spec(A)$.
Localization of a canonical module is a canonical module, see \cite[Corollary 4.3]{Aoyama-CanMod},
so it suffices to check at the maximal ideals of $A$.

\begin{Thm}\label{thm:DualOpen}
    Let $A$ be a Noetherian ring that is Gor-2.
    Then the followings hold.
    \begin{enumerate}[label=$(\roman*)$]
        \item\label{dualopen:exists} If $\fp\in \Spec(A)$ is such that $A_\fp$ admits a dualizing complex,
        then there exists an $f\in A\setminus\fp$ such that $A_f$ admits a pseudo-dualizing complex.

        \item\label{dualopen:spreads} Let $K\in D^b_{Coh}(A)$.
        If $\fp\in \Spec(A)$ is such that $K_\fp$ is a dualizing complex for $A_\fp$,
         then there exists an $f\in A\setminus\fp$ such that $K_f$ is a pseudo-dualizing complex for $A_f$.

          \item\label{dualopen:H0exists} If $\fp\in \Spec(A)$ is such that $A_\fp$ admits a canonical module,
        then there exists an $f\in A\setminus\fp$ such that $A_f$ admits a canonical module.

        \item\label{dualopen:H0spreads} Let $M$ be a finite $A$-module.
        If $\fp\in \Spec(A)$ is such that $M_\fp$ is a canonical module of $A_\fp$,
         then there exists an $f\in A\setminus\fp$ such that $M_f$ is a canonical module of $A_f$.
    \end{enumerate}
\end{Thm}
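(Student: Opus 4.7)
My plan is to reduce (i) and (iii) to (ii) and (iv), respectively, by a spreading-out argument, and to prove (ii) and (iv) by showing that the corresponding ``good locus'' is open in $\Spec(A)$.

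The main effort will go into (ii). Let $U = \{\fq \in \Spec(A) : K_\fq \text{ is a dualizing complex for } A_\fq\}$; I will show $U$ is open, whence the existence of $f \in A \setminus \fp$ with $D(f) \subseteq U$ follows from $\fp \in U$. In a Noetherian spectral space a subset is open iff it is constructible and closed under generization, so I must verify both properties. Closure under generization is immediate: if $\fr \subseteq \fq$ and $K_\fq$ is dualizing for $A_\fq$, then $A_\fr = (A_\fq)_{\fr A_\fq}$ is a further localization and $K_\fr = (K_\fq)_{\fr A_\fq}$ is the corresponding localization of a dualizing complex, which remains dualizing for $A_\fr$.

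For constructibility I will invoke the standard criterion, provable by Noetherian induction on closed subsets: a subset $E$ of a Noetherian spectral space is constructible iff for every closed irreducible $Y$ with generic point $\eta$ there is an open $V$ of $Y$ with $\eta \in V$ and either $V \subseteq E$ or $V \cap E = \emptyset$. Taking $Y = V(\fq)$, there are two cases. If $\fq \in U$, the Gor-2 hypothesis on $A$ enables Lemma \ref{lem:DualOpenInVp}\ref{dualopenInVp:spreads}, which produces $g \notin \fq$ with $V(\fq) \cap D(g) \subseteq U$. If $\fq \notin U$, then for any $\fr \in V(\fq)$ we have $\fq \subseteq \fr$, and $\fr \in U$ would, by the generization property established above, force $\fq \in U$, a contradiction; hence $V(\fq) \cap U = \emptyset$, and $V = V(\fq)$ itself works. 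In either case the criterion holds, so $U$ is constructible and therefore open, giving (ii).

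For (i), I will pick a dualizing complex $K'$ for $A_\fp$ and use Lemma \ref{lem:DescendDbCoh}, applied to the filtered colimit $A_\fp = \colim_{f \notin \fp} A_f$ of flat $A$-algebras, to descend $K'$ to some $K \in D^b_{Coh}(A_{f_0})$ with $K_\fp \cong K'$; then (ii) applied to the Gor-2 ring $A_{f_0}$ and the complex $K$ produces the required $f$. Parts (iv) and (iii) follow by the identical template, using Lemma \ref{lem:DualOpenInVp}\ref{dualopenInVp:H0spreads} in place of \ref{dualopenInVp:spreads}, and \cite[Corollary 4.3]{Aoyama-CanMod} to verify closure under generization of the canonical-module locus. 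I do not foresee a serious obstacle: the main observation is that ``being dualizing'' (or ``being a canonical module'') is generization-stable, which makes the complementary condition spread trivially along any $V(\fq)$, leaving Lemma \ref{lem:DualOpenInVp} to handle the only nontrivial case in the constructibility check.
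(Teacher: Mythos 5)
Your proposal is correct and takes essentially the same route as the paper: the paper also deduces \ref{dualopen:exists} and \ref{dualopen:H0exists} from \ref{dualopen:spreads} and \ref{dualopen:H0spreads} via Lemma \ref{lem:DescendDbCoh}, and obtains \ref{dualopen:spreads} and \ref{dualopen:H0spreads} from Lemma \ref{lem:DualOpenInVp} together with the general-topology openness criterion (constructibility plus stability under generization), which it simply cites as \citestacks{0541} and you verify by hand via Noetherian induction. No gaps.
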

\begin{proof}
    We know \ref{dualopen:spreads} implies \ref{dualopen:exists}  and  \ref{dualopen:H0spreads} implies \ref{dualopen:H0exists} by Lemma \ref{lem:DescendDbCoh}.
    On the other hand, \ref{dualopen:spreads} and \ref{dualopen:H0spreads} follows from Lemma \ref{lem:DualOpenInVp} and general topology \citestacks{0541}.
\end{proof}

Apply to the case $K=A$ and $M=A$, we get the following.
The Gorenstein case is \cite[Proposition 1.7]{Gor-2-rings}, but the quasi-Gorenstein case seems to be new.

\begin{Cor}\label{cor:Gor2impliesopen}
    The Gorenstein and quasi-Gorenstein loci of a Gor-2 Noetherian ring is open.
\end{Cor}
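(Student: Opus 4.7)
The plan is to deduce the corollary by specializing Theorem \ref{thm:DualOpen} to the two trivial choices $K = A$ and $M = A$. The key observation is that being Gorenstein (resp. quasi-Gorenstein) at $\fp$ is exactly the statement that $A_\fp$, viewed as a complex in degree zero, is a dualizing complex (resp. a canonical module) for $A_\fp$. Openness is then equivalent to showing that each prime in the locus admits a basic open neighborhood $D(f)$ contained in the locus, which is exactly the content of the ``spreads'' clauses of Theorem \ref{thm:DualOpen}.

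More precisely, for the Gorenstein locus I would take $\fp$ with $A_\fp$ Gorenstein and apply Theorem \ref{thm:DualOpen}\ref{dualopen:spreads} to $K = A \in D^b_{Coh}(A)$; since $K_\fp = A_\fp$ is a dualizing complex for $A_\fp$, there exists $f \in A \setminus \fp$ such that $A_f = K_f$ is a pseudo-dualizing complex for $A_f$. Unwinding Definition \ref{def:weakDual}, this says $A_\fq$ is a dualizing complex for $A_\fq$ for every $\fq \in \Spec(A_f)$, i.e., every such $A_\fq$ is Gorenstein. For the quasi-Gorenstein locus I would apply Theorem \ref{thm:DualOpen}\ref{dualopen:H0spreads} to $M = A$ in exactly the same manner, recalling that being a canonical module is by definition checked stalk-locally.

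The only thing to verify is that the hypothesis of Theorem \ref{thm:DualOpen} (the ring is Gor-2) is available, which is given. Since both Gorenstein and quasi-Gorenstein loci are thus shown to be unions of principal opens around each of their points, both are open in $\Spec(A)$. There is no real obstacle; this corollary is essentially a formal consequence of the theorem, and the author's parenthetical ``Apply to the case $K = A$ and $M = A$'' already signals the entire proof.
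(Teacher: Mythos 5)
Your proposal is correct and is exactly the paper's argument: the paper proves the corollary by the one-line remark ``Apply to the case $K=A$ and $M=A$'' before stating it, i.e., by specializing Theorem \ref{thm:DualOpen}\ref{dualopen:spreads} and \ref{dualopen:H0spreads} just as you do. Your unwinding of the definitions (Gorenstein at $\fp$ means $A_\fp$ is a dualizing complex for $A_\fp$, quasi-Gorenstein means $A_\fp$ is a canonical module) and the passage from principal opens to openness are precisely the intended content.
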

\begin{Rem}
    If we assume in addition that $A$ is Cohen-Macaulay, 
    then Theorem \ref{thm:DualOpen} follows from
    the characterization of canonical modules \cite{Reiten-AplusK} (see also \cite[Corollary 2.12]{Aoyama-CanMod})
    and \cite[Proposition 1.7]{Gor-2-rings}.
\end{Rem}

\begin{Thm}\label{thm:etlocHasDual}
    Let $A$ be a Noetherian ring (resp. a Noetherian ring of finite dimension).
    Assume
    \begin{enumerate}
        \item $A$ is a G-ring.
        \item $A$ is Gor-2.
    \end{enumerate}
    Then there exists a faithfully flat, \'etale ring map $A\to A'$
    such that $A'$ admits a pseudo-dualizing complex (resp. a dualizing complex).
\end{Thm}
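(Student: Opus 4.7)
The plan is to reduce to the local case via Corollary \ref{cor:DualAfterEtloc}, spread each local dualizing complex to an étale neighborhood using Theorem \ref{thm:DualOpen}, and glue finitely many such neighborhoods via quasi-compactness of $\Spec(A)$.

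For each maximal ideal $\fm$ of $A$, since localizations of G-rings are G-rings (\citestacks{07QR}), $A_\fm$ is a Noetherian local G-ring, so Corollary \ref{cor:DualAfterEtloc} produces an elementary étale-local map $A_\fm \to B$ with $B$ admitting a dualizing complex. By a standard limit argument, $B = (C_\fm)_{\fq}$ for some finitely presented étale $A_h$-algebra $C_\fm$ (for some $h \notin \fm$) and some prime $\fq$ of $C_\fm$ lying over $\fm A_h$. Since $A \to A_h$ is étale, so is the composition $A \to C_\fm$.

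Now $C_\fm$ is finite-type over the Gor-2 ring $A$, hence itself Gor-2 by Corollary \ref{cor:Gor2preserved}. Since $(C_\fm)_{\fq} = B$ admits a dualizing complex, Theorem \ref{thm:DualOpen}\ref{dualopen:exists} provides $g \in C_\fm \setminus \fq$ such that $(C_\fm)_g$ admits a pseudo-dualizing complex. Replace $C_\fm$ by $(C_\fm)_g$, still étale over $A$. Then $U_\fm := \operatorname{Im}(\Spec(C_\fm) \to \Spec(A))$ is an open subset of $\Spec(A)$ containing $\fm$ (open since étale maps are open). Because every prime of $A$ is contained in some maximal ideal and opens of $\Spec(A)$ are closed under generalization, $\{U_\fm\}_\fm$ is an open cover of the quasi-compact $\Spec(A)$. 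Pick a finite subcover $U_{\fm_1}, \ldots, U_{\fm_r}$ and set $A' := \prod_{i=1}^r C_{\fm_i}$. This is faithfully flat and étale over $A$, and since $\Spec(A') = \bigsqcup_i \Spec(C_{\fm_i})$, a pseudo-dualizing complex on $A'$ is the same data as one on each factor, so $A'$ admits one.

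For the parenthetical finite-dimensional case, each $C_{\fm_i}$ is étale over the finite-dimensional $A$ and thus finite-dimensional itself (étale maps have zero-dimensional fibers), so $\dim A' < \infty$. By Remark \ref{rem:DualRemAfterDef} the pseudo-dualizing complex on $A'$ is then automatically a dualizing complex. The main work has already been done by the earlier sections: Hinich's theorem, through Corollary \ref{cor:DualAfterEtloc}, settles the local case, and Theorem \ref{thm:DualOpen} allows spreading out; what remains is only the topological observation that covering the set of maximal ideals automatically covers $\Spec(A)$, which is what lets quasi-compactness reduce to a finite product.
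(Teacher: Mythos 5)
Your proof is correct and follows essentially the same route as the paper: reduce to the local case via Corollary \ref{cor:DualAfterEtloc}, spread out using Gor-2 stability (Corollary \ref{cor:Gor2preserved}) and Theorem \ref{thm:DualOpen}, and take a finite product of \'etale algebras whose images cover the quasi-compact $\Spec(A)$. The only cosmetic difference is that you index over maximal ideals and invoke stability of opens under generalization, whereas the paper runs over all primes directly; this changes nothing of substance.
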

\begin{proof}
    Let $\fp\in\Spec(A)$.
    By Corollary \ref{cor:DualAfterEtloc},
    there exists an \'etale ring map $A\to B$ and a prime $\fq\in\Spec(B)$ lying above $\fp$ such that $B_\fq$ admits a dualizing complex. 
    Note that $B$ is Gor-2 since it is of finite type over $A$, Corollary \ref{cor:Gor2preserved}.
    By Theorem \ref{thm:DualOpen},
    localizing $B$ near $\fq$ 
    we may assume $B$ admits a pseudo-dualizing complex.
    If $A$ is finite-dimensional then so is $B$,
    so $B$ admits a dualizing complex.
    Now we take $A'$ to be a finite product of such $B$ so that $\Spec(A')\to \Spec(A)$ is surjective.
\end{proof}

\begin{Cor}\label{cor:QEetaleExcellent}
    Let $A$ be a Noetherian quasi-excellent ring.
    Then there exists a faithfully flat, \'etale ring map $A\to A'$
    such that $A'$ is excellent.
\end{Cor}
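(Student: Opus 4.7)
The plan is to invoke Theorem \ref{thm:etlocHasDual} directly. Since a Noetherian quasi-excellent ring is by definition a G-ring satisfying J-2, only the Gor-2 hypothesis of that theorem needs to be verified for $A$. Once this is done, the theorem will produce a faithfully flat \'etale ring map $A\to A'$ such that $A'$ admits a pseudo-dualizing complex, and Corollary \ref{cor:DualFGAlgGor2} will then force $A'$ to be universally catenary.

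To verify Gor-2, I would fix an arbitrary $\fp\in\Spec(A)$ and exhibit a nonempty Gorenstein open subset of $\Spec(A/\fp)$. The J-2 property applied to the finite type $A$-algebra $A/\fp$ makes the regular locus of $\Spec(A/\fp)$ open. This locus contains the generic point, since $A/\fp$ is a domain and its fraction field is a regular local ring. Because regular local rings are Gorenstein, this nonempty open subset is contained in the Gorenstein locus, yielding the Gor-2 condition.

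Finally, I would observe that quasi-excellence transfers from $A$ to the \'etale (hence finite type) $A$-algebra $A'$: both the G-ring property and the J-2 property are preserved under finite type extensions, so $A'$ is quasi-excellent. Combined with the universal catenarity obtained from the pseudo-dualizing complex, $A'$ is excellent.

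I do not anticipate any serious obstacle here; the substantive input---Hinich's theorem packaged as Corollary \ref{cor:DualAfterEtloc}, the formal lifting result, and the openness consequences of \S\ref{sec:Open}---has already been absorbed into Theorem \ref{thm:etlocHasDual} and Corollary \ref{cor:DualFGAlgGor2}, so the present deduction is essentially bookkeeping of definitions.
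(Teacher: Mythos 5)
Your proposal is correct and is exactly the paper's intended derivation: the corollary is stated as an immediate consequence of Theorem \ref{thm:etlocHasDual}, with the Gor-2 hypothesis supplied by J-2 (the regular locus of $\Spec(A/\fp)$ is open, nonempty at the generic point, and regular implies Gorenstein), universal catenarity of $A'$ coming from Corollary \ref{cor:DualFGAlgGor2}, and quasi-excellence passing to the finite type algebra $A'$. (The paper's Remark \ref{rem:etaleUC} sketches a different, more elementary route via normalization and the dimension formula, but your argument is the one the corollary's placement presupposes.)
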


\begin{Rem}\label{rem:etaleUC}
    Corollary \ref{cor:QEetaleExcellent} is not difficult by itself, and may be well-known.
    We sketch an argument.
    First, consider a finite injective map $A\to B$ of Noetherian domains with $B$ universally catenary.
    Then for $\fp\in\Spec(A)$, $A_\fp$ is universally catenary if and only if for all $\fq\in\Spec(B)$ above $\fp$, $\HT(\fq)=\HT(\fp)$.
    This follows from \citetwostacks{02IJ}{0AW6}.
    This condition is constructible by for example \cite[Proposition 6.10.6]{EGA4_2}.
    
    Since a normal quasi-excellent ring is universally catenary \citestacks{0AW6},
    and since a quasi-excellent ring is Nagata \citestacks{07QV},
    we can take $B$ to be the normalization of $A=R/\fp$ where $\fp$ is a minimal prime of a given quasi-excellent ring $R$.
    It is now clear that the universally catenary locus of a quasi-excellent ring is open, 
    and that a unibranch quasi-excellent local ring is universally catenary.
    We thus get Corollary \ref{cor:QEetaleExcellent} from \citestacks{0CB4}.

    Using \cite[Lemma 2.5 and Theorem 2.13]{Macaulay-Cesnavi},
    replacing normalization with an $(S_2)$-ification,
    the argument above tells us that every  $(S_2)$-quasi-excellent \cite[(2.12)]{Macaulay-Cesnavi} Noetherian ring $A$
    has open universally catenary locus and 
    admits an \'etale, faithfully flat ring map $A\to A'$ such that $A'$ is $(S_2)$-excellent;
    and a Noetherian unibranch local ring with $(S_2)$ formal fibers is universally catenary.
\end{Rem}

\section{One-dimensional schemes}\label{sec:OneDim}

\begin{Lem}
\label{lem:DualAscend}    
    Let $(A,\fm,k)\to (B,\fn,l)$ be a flat local map of Noetherian local rings.
    Assume that $A$ admits a dualizing complex $K$, and that $B/\fm B$ is Gorenstein.
    Then $K\otimes_A^L B$ is a dualizing complex for $B$. 
\end{Lem}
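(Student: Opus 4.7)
The plan is to apply the criterion for dualizing complexes over a Noetherian local ring used in the proof of Lemma \ref{lem:radReflect} (via \cite[Chapter V, Proposition 3.4]{Residues-Duality}): an object $M\in D^b_{Coh}(B)$ is a dualizing complex for $B$ provided $R\operatorname{Hom}_B(l, M)$ is a shift of $l$. I will verify both hypotheses for $M = K\otimes_A^L B$. Boundedness and coherence of $K\otimes_A^L B$ are immediate from $K\in D^b_{Coh}(A)$ and the flatness of $A\to B$.

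For the Ext calculation I factor $B\twoheadrightarrow l$ through $B\twoheadrightarrow B/\fm B\twoheadrightarrow l$. Change of rings (adjunction) gives
$$R\operatorname{Hom}_B(l,\, K\otimes_A^L B) \;\cong\; R\operatorname{Hom}_{B/\fm B}\bigl(l,\; R\operatorname{Hom}_B(B/\fm B,\, K\otimes_A^L B)\bigr),$$
while flat base change along $A\to B$ (valid since $k$ is pseudo-coherent over the Noetherian ring $A$ and $K$ is bounded) gives
$$R\operatorname{Hom}_B(B/\fm B,\, K\otimes_A^L B) \;\cong\; R\operatorname{Hom}_A(k, K)\otimes_A^L B.$$
Since $K$ is dualizing for the local ring $A$, one has $R\operatorname{Hom}_A(k, K)\cong k[s]$ for some $s\in\bZ$, and therefore the inner $R\operatorname{Hom}$ equals $(B/\fm B)[s]$. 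Substituting yields $R\operatorname{Hom}_B(l,\, K\otimes_A^L B)\cong R\operatorname{Hom}_{B/\fm B}(l,\, B/\fm B)[s]$.

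Because $B/\fm B$ is Gorenstein local, $R\operatorname{Hom}_{B/\fm B}(l,\, B/\fm B)$ is itself a shift of $l$, and so $R\operatorname{Hom}_B(l,\, K\otimes_A^L B)$ is a shift of $l$ as well. The criterion then yields that $K\otimes_A^L B$ is a dualizing complex for $B$. There is no serious obstacle; the only point that needs care is the flat base change formula for $R\operatorname{Hom}$, and it applies without issue thanks to $k$ being pseudo-coherent over the Noetherian ring $A$ and $K$ being bounded.
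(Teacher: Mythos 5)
Your proof is correct and is essentially the paper's argument: the paper also uses flat base change (Stacks 0A6A) to see that $R\operatorname{Hom}_B(B/\fm B, K\otimes_A^L B)$ is a shift of $B/\fm B$, and then concludes via Lemma \ref{lem:radReflect}, whose proof is exactly your adjunction step through $B/\fm B$ combined with the criterion of \cite[Chapter V, Proposition 3.4]{Residues-Duality}. You have simply inlined that lemma rather than citing it.
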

\begin{proof}
    $R\operatorname{Hom}_A(k,K)$ is a shift of $k$, so $R\operatorname{Hom}_B(B/\fm B,K\otimes_A^L B)$ is a shift of $B/\fm B$ by \citestacks{0A6A}.
    We conclude by Lemma \ref{lem:radReflect}.
\end{proof}

\begin{Lem}\label{lem:OneDimLocHasDual}
    Let $A$ be a Noetherian local ring of dimension 1.
    Assume that the formal fibers of $A$ are Gorenstein.
    Then $A$ admits a dualizing complex.
\end{Lem}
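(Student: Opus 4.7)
My approach is to first reduce to the reduced (hence Cohen-Macaulay) case, and then construct a canonical module on $A$ by descending the canonical module of $\hat A$, using the Gorenstein hypothesis on formal fibers to realize both modules as fractional ideals.

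First I would reduce to the case $A$ is reduced. Let $N$ be the nilradical of $A$; since $A$ is Noetherian, $N$ is nilpotent, so $A$ is trivially $N$-adically complete. Because $N$ is contained in every prime, tensoring with $\kappa(\fp)$ kills $N\hat A$, so the formal fibers of $A/N$ coincide with those of $A$ and remain Gorenstein. By Theorem \ref{thm:GLOBALdeform} applied to $N$, it suffices to produce a dualizing complex on $A/N$, so I may assume $A$ is reduced.

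Now $A$ is reduced and $1$-dimensional, so $\fm\notin\Ass(A)$, giving $\depth A\geq 1$ and $A$ Cohen-Macaulay. Flatness of $A\to\hat A$ with Cohen-Macaulay (Gorenstein) fibers yields $\hat A$ Cohen-Macaulay of dimension one; being complete local, it admits a canonical module $\omega_{\hat A}$, and $\omega_{\hat A}[1]$ is a normalized dualizing complex. Since $A_\fp=\kappa(\fp)$ at each minimal prime $\fp$ (as $A$ is reduced), the formal fiber $\hat A\otimes_A\kappa(\fp)$ coincides with the localization $\hat A_\fp$; by hypothesis this is $0$-dimensional Gorenstein, so every minimal prime $\fq\subset\hat A$ has $\hat A_\fq$ Gorenstein. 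Hence $\hat A$ is generically Gorenstein, and $\omega_{\hat A}$ can be realized as a fractional ideal inside the total quotient ring $Q(\hat A)=\hat A\otimes_A Q(A)$.

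Set $\omega:=\omega_{\hat A}\cap Q(A)$ inside $Q(\hat A)$. I plan to verify that $\omega$ is finite over $A$ and that the natural map $\omega\otimes_A\hat A\to\omega_{\hat A}$ is an isomorphism; granted these, the base change $\omega[1]\otimes^L_A\hat A\cong\omega_{\hat A}[1]$ is a dualizing complex for $\hat A$, and faithfully flat descent with Gorenstein fibers (\Tag{0E4A}) forces $\omega[1]$ to be a dualizing complex for $A$. The main obstacle is this verification. Finiteness of $\omega$ will follow from faithfully flat descent of finite generation once $\omega\hat A=\omega_{\hat A}$ is established, so the essential point is that the $A$-rational part of $\omega_{\hat A}$ generates the whole over $\hat A$. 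I expect this to follow from the identity $A=\hat A\cap Q(A)$ inside $Q(\hat A)$ (a standard consequence of faithful flatness) together with the fractional-ideal structure that the Gorenstein formal fiber hypothesis provides.
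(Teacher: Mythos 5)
Your overall route is genuinely different from the paper's (which glues a complex on $\Spec(A)$ from a normalized dualizing complex $M$ on $A^\wedge$ and a dualizing complex $E$ on the Artinian ring $A_a$ via Bhatt's Beauville--Laszlo-type theorem), and is instead in the spirit of the Ogoma-style references mentioned in the remark after the lemma: descend the canonical module of $A^\wedge$ as a fractional ideal. Your reductions are fine: passing to $A/N$ via Theorem \ref{thm:GLOBALdeform} with $I=N$ nilpotent is legitimate and the formal fibers are unchanged; a reduced one-dimensional local ring is Cohen--Macaulay; and in this situation the Gorenstein formal fibre hypothesis is exactly what makes $\hat A$ Gorenstein at its minimal primes, so $\omega_{\hat A}$ embeds as a fractional ideal in $Q(\hat A)=Q(A)\otimes_A\hat A$. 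But the step you defer is precisely where all the content of the lemma lies, and as sketched it is a genuine gap. The identity $A=\hat A\cap Q(A)$ does not imply that $\omega:=\omega_{\hat A}\cap Q(A)$ generates $\omega_{\hat A}$ over $\hat A$: what you need is a Nakayama-type statement, namely an element of $\omega_{\hat A}\cap Q(A)$ not lying in $\hat\fm\,\omega_{\hat A}$, and nothing in your sketch produces one. Note that the fractional-ideal realization is only defined up to multiplication by an arbitrary unit of $Q(\hat A)$, and $Q(A)$ is a very thin subring of $Q(\hat A)$ (e.g.\ a field sitting inside a product of complete local Artinian rings when $\hat A$ acquires several minimal primes or nilpotents), so for an arbitrary embedding there is no formal reason the $Q(A)$-rational part of $\omega_{\hat A}$ is large; in particular ``fractional-ideal structure plus $A=\hat A\cap Q(A)$'' is not a proof.

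The step can be repaired, but it requires an actual argument and a normalization of the embedding. One way: first arrange $\hat A\subseteq\omega_{\hat A}\subseteq Q(\hat A)$ (use generic Gorensteinness and a prime-avoidance argument to find an element of $\omega_{\hat A}$ that is a unit of $Q(\hat A)$, then scale). Next choose a nonzerodivisor $d\in\hat\fm$ with $d\,\omega_{\hat A}\subseteq\hat A$; since $\hat A/d\hat A$ is Artinian, $\hat\fm^N\subseteq d\hat A$ for some $N$, so $\hat A\subseteq\omega_{\hat A}\subseteq(\hat A:_{Q(\hat A)}\hat\fm^N)$. By flatness $(\hat A:_{Q(\hat A)}\hat\fm^N)=(A:_{Q(A)}\fm^N)\hat A$, and the quotient $(\hat A:\hat\fm^N)/\hat A\cong\bigl((A:\fm^N)/A\bigr)\otimes_A\hat A=(A:\fm^N)/A$ has finite length, so its $\hat A$-submodules are $A$-submodules; pulling $\omega_{\hat A}/\hat A$ back inside $(A:_{Q(A)}\fm^N)$ yields a finite $A$-submodule $\omega\subseteq Q(A)$ with $\omega\hat A=\omega_{\hat A}$ (and one then checks $\omega=\omega_{\hat A}\cap Q(A)$ using $A=\hat A\cap Q(A)$). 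Only after this does your conclusion follow: $\omega\otimes_A\hat A\cong\omega\hat A=\omega_{\hat A}$ by flatness, and \Tag{0E4A} descends the dualizing property (no fibre condition is needed for descent). The paper's proof sidesteps this entire issue by never descending a module: it instead matches $M_a$ with $E\otimes^L_{A_a}(A^\wedge)_a$ on the punctured spectrum, using the Gorenstein formal fibres via Lemma \ref{lem:DualAscend}, and invokes the gluing theorem of Bhatt to produce $K$ with $K\otimes^L_A A^\wedge\cong M$.
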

\begin{proof}
%
    Let $a$ be a parameter of $A$.
    The $aA$-adic completion $A^\wedge$ of $A$ is a complete local ring,
    thus admits a dualizing complex $M$.
    We may assume that $M$ is normalized.
    
    Let $E$ be a dualizing complex for the Artinian ring $A_a$ concentrated in degree $-1$.
    The map $A_a\to (A^\wedge)_a$ is a flat map between Artinian rings with Gorenstein fibers by our assumptions,
    thus $E\otimes^L_{A_a}(A^\wedge)_a$ is a dualizing complex by Lemma \ref{lem:DualAscend}.
    On the other hand $M_a$
    is another dualizing complex of $(A^\wedge)_a$ concentrated in degree $-1$ by \citestacks{0A7V}.
    Thus
    $
    E\otimes^L_{A_a}(A^\wedge)_a\cong M_a
    $
     as $(A^\wedge)_a$ is Artinian.
    By \cite[Theorem 1.4]{Bha16-prod},
    there exists $K\in D(A)$ such that $K\otimes^L_A A^\wedge\cong M$.
    Then $K$ is a dualizing complex for $A$ \citestacks{0E4A}.
\end{proof}

\begin{Rem}
    Lemma \ref{lem:OneDimLocHasDual} also follows from \cite[Theorem 5.3]{one-dim-CM-has-dual} and, say, \cite[Corollary 3.7]{Ogoma-existcanmod-existomega}.
\end{Rem}

\begin{Thm}\label{thm:ExtendDualtoCod2} 
    Let $X$ be a Noetherian scheme. 
    Assume the followings.
    \begin{enumerate}
        \item $X$ is Gor-2.
        \item Every local ring of $X$ of dimension 1 has Gorenstein formal fibers.
    \end{enumerate}
    Let $U$ be an open subscheme of $X$, $K$ a pseudo-dualizing complex on $U$ such that for all generic points $\xi\in U$, $K_\xi$ is concentrated in degree $0$ (for example $U=\emptyset$).
    Then there exists an open subset $W\supseteq U$ of $X$ such that $\dim \cO_{X,x}>1$ for all $x\not\in W$ and that $W$ admits a pseudo-dualizing complex $K_W$ with $K_W|_{U}=K$ such that $K_\xi$ is concentrated in degree $0$ for all generic points $\xi\in X$.
\end{Thm}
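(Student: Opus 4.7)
Proof plan:

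The plan is to apply Zorn's lemma to the poset $\mathcal{P}$ of pairs $(V, L)$ where $V \supseteq U$ is open in $X$ and $L$ is a pseudo-dualizing complex on $V$ with $L|_U \cong K$ such that $L_\xi$ is concentrated in degree $0$ for every generic point $\xi$ of $X$ lying in $V$, ordered by restriction. Chains admit suprema by gluing representatives on the cover $\{V_i\}$, so Zorn produces a maximal $(W, K_W)$. I would then derive a contradiction from the assumption that some $x \in X \setminus W$ has $\dim \cO_{X,x} \leq 1$, splitting into the cases $\dim \cO_{X,x} = 0$ and $\dim \cO_{X,x} = 1$.

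For $\dim \cO_{X,x} = 0$, the point $x$ is a generic point of $X$. Work in an affine chart $\Spec A \ni x$ with $x \leftrightarrow \fp$ a minimal prime. By prime avoidance pick $f \notin \fp$ lying in every other minimal prime of $A$, so $\fp$ is the unique minimal prime of $A$ in $D(f)$. Since $\fp$ is minimal and $\fp \notin W$, no point of $W$ can specialize to $\fp$, so $\fp \notin \overline{W}$; shrinking $f$, one arranges $D(f) \cap W = \emptyset$. Take $E$ to be the injective hull of the residue field of the Artinian local ring $A_\fp$, placed in degree $0$. By Lemma \ref{lem:DescendDbCoh}, $E$ descends to some $F \in D^b_{Coh}(A_{f'})$ with $f' \mid f$, and Theorem \ref{thm:DualOpen} \ref{dualopen:spreads} (applicable since $A_{f'}$ is Gor-2) allows a further shrink of $f'$ so that $F$ is pseudo-dualizing on $D(f')$. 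Then $(W \sqcup D(f'),\, K_W \sqcup F)$ strictly enlarges $(W, K_W)$, a contradiction. Hence $W$ contains every generic point of $X$.

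For $\dim \cO_{X,x} = 1$, work in an affine chart $\Spec A \ni x$ with $x \leftrightarrow \fp$. By hypothesis $(2)$ the formal fibers of $A_\fp$ are Gorenstein, so by Lemma \ref{lem:OneDimLocHasDual} there is a dualizing complex $E$ on $A_\fp$, which I normalize so that its stalk at each minimal prime of $A_\fp$ sits in degree $0$. By prime avoidance, pick $f \notin \fp$ lying in every minimal prime of $A$ \emph{not} contained in $\fp$, so the generic points of $X$ in $D(f)$ are precisely the minimal primes of $A$ contained in $\fp$, and these all lie in $W$ by the previous paragraph. As before, descend and spread $E$ to a pseudo-dualizing complex $F$ on $D(f)$, concentrated in degree $0$ at each generic point of $X$ in $D(f)$.

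On the overlap $Y := D(f) \cap W$, both $F|_Y$ and $K_W|_Y$ are pseudo-dualizing and normalized to degree $0$ at the generic points of $Y$, so by Corollary \ref{cor:DualizeUniqueuptoL} they differ by an invertible $L \in D(Y)$ whose stalk at each generic point of $Y$ is trivial. By Lemma \ref{lem:0A6Aforiso}, $L$ is trivial on an open neighborhood of each generic point of $Y$; since the generalizations of $\fp$ are exactly these generic points, $\fp$ is not in the closure (in $D(f)$) of the locus where $L$ is possibly nontrivial, so one can shrink $f$ to assume $L$ is locally trivial on $Y$. A further shrink is needed to trivialize the \v{C}ech cocycle of transition functions across intersections of irreducible components, after which $F|_Y \cong K_W|_Y$, and gluing yields an element of $\mathcal{P}$ strictly larger than $(W, K_W)$, contradicting maximality. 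The main technical obstacle will be this final gluing step—controlling the line-bundle cocycle on overlaps of components—where the degree-$0$ normalization at generic points is doing the essential rigidification.
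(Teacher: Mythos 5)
Your overall architecture (maximal extension via Noetherianity/Zorn, first absorbing all generic points, then attacking a point with $\dim\cO_{X,x}=1$ by producing a pseudo-dualizing complex on a small $D(f)$ via Lemma \ref{lem:OneDimLocHasDual}, Lemma \ref{lem:DescendDbCoh} and Theorem \ref{thm:DualOpen}, normalized in degree $0$ at the generic points) matches the paper up to the last step. But the last step is where your argument has a genuine gap, and you flag it yourself: you compare $F$ and $K_W$ on the whole overlap $Y=D(f)\cap W$, obtain an invertible object $L$ (note Corollary \ref{cor:DualizeUniqueuptoL} is stated for rings, so even producing $L$ on the non-affine $Y$ needs a word: take the sheafy $R\operatorname{Hom}$ and check invertibility and the evaluation isomorphism affine-locally), and then assert that a further shrink of $f$ trivializes it. Two points here are not justified. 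First, the sentence ``$\fp$ is not in the closure of the locus where $L$ is possibly nontrivial'' does not follow from ``the generalizations of $\fp$ are exactly these generic points'': in a Noetherian scheme a set can have $\fp$ in its closure without containing any generization of $\fp$ (think of infinitely many points accumulating onto $\overline{\{\fp\}}$), so you must actually rule this out. Second, and more seriously, what you need is not local triviality of $L$ (that is automatic: the shift of an invertible object is locally constant, every connected component of $Y$ contains a generic point of $X$, and there $L$ is trivial in degree $0$, so $L$ is an honest line bundle) but global triviality of $L$ on some shrunk overlap $D(fg)\cap W$; shrinking $D(f)$ around $\fp$ does not obviously kill $\operatorname{Pic}$ of $D(fg)\cap W$, which remains a possibly large non-affine open, and no limit argument applies since the transition maps are non-affine open immersions. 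This is exactly the obstruction the paper's proof is engineered to avoid: there one arranges (using that $U\times_X\Spec(\cO_{X,x})$ is zero-dimensional, hence affine, and \citestacks{01Z6}) that the overlap is an affine $\Spec(B)$, proves the isomorphism over the Artinian localization $B_\fp$ (where ``dualizing, concentrated in degree $0$'' pins the complex down), and spreads the isomorphism itself out to some $B_g$ by Lemma \ref{lem:0A6Aforiso}; no invertible-object or cocycle question ever arises.

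Your gap is fillable, but it needs an argument you did not give. For instance: choose pairwise disjoint neighborhoods $N_i\subseteq Y$ of the finitely many generic points $\fq_i\subseteq\fp$ (the complements of the other components provide disjointness) on which $L$ is trivial, and set $C=Y\setminus\bigcup_i N_i$, a closed subset of $Y$. If $Z_0$ is an irreducible component of $\overline{C}$ (closure in $X$) containing $\fp$, its generic point $\zeta$ generizes $\fp$, so $\zeta\in\{\fp,\fq_1,\dots,\fq_r\}$; $\zeta=\fq_i$ is impossible because $C$ is closed in $Y$ and $\fq_i\in Y\setminus C$, while $\zeta=\fp$ is impossible because then $C\cap\overline{\{\fp\}}$ would be dense in $\overline{\{\fp\}}$, yet $W\cap\overline{\{\fp\}}=\emptyset$ (an open set missing the generic point of an irreducible set misses the set). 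Hence $\fp\notin\overline{C}$, one can pick $g\in A\setminus\fp$ vanishing on $\overline{C}$, and on $D(fg)\cap W\subseteq\coprod_i N_i$ the bundle $L$ is trivial, after which the two-open gluing goes through. With this inserted your route works and is a genuinely different (if somewhat heavier) way to do the gluing than the paper's affine-overlap trick; without it, the proof is incomplete precisely at its decisive step. Minor further points: ``$f'\mid f$'' should be a further localization ($D(f')\subseteq D(f)$), and for the Zorn step it is cleaner to invoke the ascending chain condition on opens of the Noetherian space $X$ rather than glue representatives along an infinite chain, since derived-category gluing over infinitely many opens is not available without extra input.
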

\begin{proof}
%
    We may assume $(U,K)$ cannot be enlarged to any $(W,K_W)$, and we shall show $\dim \cO_{X,x}>1$ for all $x\not\in U$.

    Let $x\not\in U$, and assume $\dim\cO_{X,x}\leq 1$.
    Let $V=\Spec(A)$ be an affine open neighborhood of $x\in X$ such that $A$ has a pseudo-dualizing complex $L$, which exists by Lemma \ref{lem:OneDimLocHasDual} and Theorem \ref{thm:DualOpen}.
    We may assume that all generic points of $V$ specialize to $x$.
    Since $\dim \cO_{X,x}=1$,
    we see from \citestacks{0A7V} that, after shifting, 
    we may assume $L_\fq$ is concentrated at degree $0$ for all minimal primes $\fq$ of $A$.

    If $U\times_X \Spec(\cO_{X,x})$ is empty,
    then we may assume $U\cap V=\emptyset$, so we can enlarge $U$ to $U\cup V$.
    Otherwise $U\times_X \Spec(\cO_{X,x})$ has dimension zero, so it is affine.
    Thus we may assume $U\cap V$ is affine by \citestacks{01Z6}. 
    Write $B=\cO(U\cap V)$, so $B$ admits a dualizing complex $K_0$ given by the restriction of $K$ to $U\cap V$.
%
    Then $L\otimes^L_{A}B_\fp\cong K_0\otimes^L_B B_\fp$,
    where $\fp\in\Spec(A)$ corresponds to $x$,
    since $\dim B_\fp=0$
    and both sides are dualizing complexes concentrated in degree $0$.
    Thus there exists $g\in A\setminus \fp$ such that $L\otimes^L_{A} B_{g}\cong K_0\otimes^L_B B_{g}$ by Lemma \ref{lem:0A6Aforiso},
    so we can enlarge $U$ to $U\cup \Spec(A_g)$.
\end{proof}
\begin{Rem}
    The ring $A/I$ in \cite[\S 1]{Greco-Universal-Catenary-No-Lift} does not admit a dualizing complex
    by Theorem \ref{thm:GLOBALdeform}.
    $A/I$ is a semi-local ring with two maximal ideals $\fm',\fn'$
    with $\HT(\fm')=1$,
    $\HT(\fn')=2$.
    Both localizations $(A/I)_{\fm'}$
    and $(A/I)_{\fn'}$ admit dualizing complexes, see \cite[Lemma 1.5]{Greco-Universal-Catenary-No-Lift},
    but the restrictions of a dualizing complex for  $(A/I)_{\fn'}$
    to the generic points are concentrated in different degrees by \citestacks{0A7V},
    so Theorem \ref{thm:ExtendDualtoCod2} does not apply.
    Together with Theorem \ref{thm:DualOpen},
    this tells us that admitting dualizing complexes is not a Zariski local property.
\end{Rem}

\begin{Cor}\label{cor:Dim1HasDual}
    Let $X$ be a Noetherian scheme of dimension 1.
    Then $X$ admits a dualizing complex if and only if $X$ is Gor-2 and the local rings of $X$ has Gorenstein formal fibers.
\end{Cor}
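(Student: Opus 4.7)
The plan is to deduce each direction directly from earlier results, so the proof should be short.

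For the ``only if'' direction, assume $X$ admits a dualizing complex. The property of being Gor-2 is Zariski-local, so I would verify it on each affine open $\Spec(A)\subseteq X$: the restriction of the dualizing complex to $\Spec(A)$ is a dualizing complex (hence a pseudo-dualizing complex) for $A$, so Corollary \ref{cor:DualFGAlgGor2} gives that $A$ is Gor-2. For the formal fiber condition, for each $x\in X$ the local ring $\cO_{X,x}$ admits a dualizing complex (localize the one on $X$), and then one invokes the standard fact \citestacks{0AWY} that a Noetherian local ring admitting a dualizing complex has Gorenstein formal fibers.

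For the ``if'' direction, which is the substantive one, I would apply Theorem \ref{thm:ExtendDualtoCod2} with $U=\emptyset$. The hypotheses match exactly: $X$ is Gor-2, and since $\dim X=1$ every local ring of dimension $1$ has Gorenstein formal fibers by assumption (local rings of dimension $0$ automatically qualify since the hypothesis of Theorem \ref{thm:ExtendDualtoCod2} only concerns $1$-dimensional local rings). The theorem produces an open $W\supseteq\emptyset$ with $\dim\cO_{X,x}>1$ for all $x\notin W$ and a pseudo-dualizing complex $K_W$ on $W$. Since $\dim X=1$, there is no point with $\dim\cO_{X,x}>1$, so $W=X$, giving a pseudo-dualizing complex on $X$. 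Finally, since $\dim X=1$ is finite, Remark \ref{rem:DualRemAfterDef} upgrades this to a genuine dualizing complex.

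The main (and essentially only) obstacle is conceptual rather than technical: one has to recognize that Theorem \ref{thm:ExtendDualtoCod2} is tailor-made for this statement, with the condition ``$\dim\cO_{X,x}>1$ for $x\notin W$'' being vacuous precisely in dimension $\leq 1$. No further computation is required beyond unpacking definitions and citing the preceding results.
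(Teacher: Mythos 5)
Your proof is correct and follows the paper's own argument: the ``only if'' direction via Corollary \ref{cor:DualFGAlgGor2} and \citestacks{0AWY}, and the ``if'' direction by applying Theorem \ref{thm:ExtendDualtoCod2} with $U=\emptyset$ and noting the condition $\dim\cO_{X,x}>1$ is vacuous in dimension $1$. The additional remarks (checking Gor-2 affine-locally, upgrading the pseudo-dualizing complex via Remark \ref{rem:DualRemAfterDef}) just make explicit what the paper leaves implicit.
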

\begin{proof}
    ``Only if'' follows from \citestacks{0AWY} and Corollary \ref{cor:DualFGAlgGor2}.
    ``If'' follows from Theorem \ref{thm:ExtendDualtoCod2} with $U=\emptyset$.
\end{proof}

\begin{Rem}
There exists a Noetherian local domain of dimension $1$ with non-Gorenstein formal fibers,
see \cite[Remarque 3.2]{Ferrand-Raynaud-bad-fibers}.
Such a ring is Gor-2, and even J-2, being one-dimensional and local.

There also exists a one-dimensional G-ring that is not Gor-2.
Such a ring can be constructed using the general method in \cite{Hoc73-nonopen}.

Therefore neither of the two conditions in Corollary \ref{cor:Dim1HasDual} implies the other.
\end{Rem}

\begin{Rem}
    In the terminology of \cite{Sharp-existomega-implies-acceptable},
    Corollary \ref{cor:Dim1HasDual} says that a Noetherian scheme of dimension 1 admits a dualizing complex if and only if it is acceptable.
\end{Rem}

\begin{Rem}
    There exists a two-dimensional excellent local UFD that does not admit a dualizing complex.
    See \cite[Example 6.1]{Nishimura-bad-local-rings}.
\end{Rem}

\section{Remarks on quotients of Gorenstein and Cohen-Macaulay rings}\label{sec:quot}

Our Lemma \ref{lem:FiniteUpperShriek} and Theorem \ref{thm:SharpKawasaki} imply the following result,
originally due to Kawasaki \cite{Kawasaki-arithmetic-Macaulay} for finite-dimensional rings.
\begin{Thm}\label{thm:Sharpiffs}
    Let $A$ be a Noetherian ring.
    Then the followings are equivalent.
    \begin{enumerate}[label=$(\roman*)$]
        \item $A$ is a quotient of a Gorenstein ring.
        \item There exists a finitely generated $A$-algebra $B$ that is Gorenstein and admits a section $B\to A$.
        \item $A$ admits a pseudo-dualizing complex.
    \end{enumerate}
\end{Thm}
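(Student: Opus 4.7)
The plan is to prove the theorem as a triangle of three implications, each of which reduces to a result already developed in the paper, so the main conceptual work has been done by the time we reach this point.

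First I would establish (ii) $\Rightarrow$ (i), which is immediate: if $B$ is a finitely generated Gorenstein $A$-algebra with a section $s\colon B\to A$, then $A\cong B/\ker(s)$ is a quotient of a Noetherian Gorenstein ring. No further argument is needed.

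Next I would handle (i) $\Rightarrow$ (iii). Suppose $A = R/I$ for a Noetherian Gorenstein ring $R$. By Definition \ref{def:weakDual}, $R$ viewed as an object of $D(R)$ is a pseudo-dualizing complex for itself. The quotient map $R\to A$ is finite, so Lemma \ref{lem:FiniteUpperShriek} applied with $K=R$ shows that
\[
R\operatorname{Hom}_R(A,R)=R\operatorname{Hom}_R(R/I,R)\in D(A)
\]
is a pseudo-dualizing complex for $A$. This uses only the finiteness of the map and the fact that $R$ is itself a pseudo-dualizing complex; no dimension hypothesis on $R$ or $A$ is needed.

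Finally I would deduce (iii) $\Rightarrow$ (ii) directly from Theorem \ref{thm:SharpKawasaki}, which is precisely the assertion that a Noetherian ring $A$ admits a pseudo-dualizing complex if and only if there exists a Gorenstein finite type $A$-algebra $B$ admitting a section $B\to A$. This is the deepest input and the step that bears essentially all of the difficulty of the theorem, as it packages the Kawasaki-type construction of a Gorenstein cover together with the Gor-$2$, universally catenary, and Gorenstein-formal-fiber consequences of admitting a pseudo-dualizing complex (via Corollary \ref{cor:DualFGAlgGor2} and \citestacks{0AWY}). Once (iii) $\Rightarrow$ (ii) is invoked, the cycle closes and the theorem is proved. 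The main obstacle is not in this proof at all but in the earlier Theorem \ref{thm:SharpKawasaki}; here everything is a clean bookkeeping of that result together with the upper-shriek functoriality of pseudo-dualizing complexes under finite maps.
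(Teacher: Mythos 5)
Your proposal is correct and matches the paper's own (implicit) argument exactly: the paper derives this theorem precisely from Lemma \ref{lem:FiniteUpperShriek} applied to the surjection from the Gorenstein ring (giving (i) $\Rightarrow$ (iii)) together with Theorem \ref{thm:SharpKawasaki} (giving (iii) $\Rightarrow$ (ii)), with (ii) $\Rightarrow$ (i) being trivial. Nothing is missing.
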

Therefore, Theorem \ref{thm:GLOBALdeform} can be rewritten as 
\begin{Thm}\label{thm:QuotGorLifts}
    Let $A$ be a Noetherian ring, $I$ an ideal of $A$.
    If $A$ is $I$-adically complete,
    then $A/I$ is a quotient of a Gorenstein ring if and only if $A$ is. 
\end{Thm}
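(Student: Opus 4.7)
The plan is to view Theorem \ref{thm:QuotGorLifts} as an immediate combination of the characterization in Theorem \ref{thm:Sharpiffs} with the formal lifting result Theorem \ref{thm:GLOBALdeform}. Concretely, I will translate each occurrence of ``quotient of a Gorenstein ring'' into ``admits a pseudo-dualizing complex'' and then apply the already-proven lifting theorem.

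First I would dispose of the easy ``only if'' direction: if $A$ is a quotient of a Gorenstein ring $R$, say $A = R/J$, then $A/I$ is a quotient of the same $R$ via the composition $R \twoheadrightarrow A \twoheadrightarrow A/I$. This direction does not need Theorem \ref{thm:Sharpiffs} or completeness at all.

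For the ``if'' direction, assume $A$ is $I$-adically complete and $A/I$ is a quotient of a Gorenstein ring. By Theorem \ref{thm:Sharpiffs} applied to $A/I$, the ring $A/I$ admits a pseudo-dualizing complex $K_1 \in D(A/I)$. Since $A$ is $I$-adically complete, Theorem \ref{thm:GLOBALdeform} produces a pseudo-dualizing complex $K \in D(A)$ with $R\operatorname{Hom}_A(A/I, K) \cong K_1$ (we only need the existence, not the compatibility). Now apply Theorem \ref{thm:Sharpiffs} in the opposite direction to $A$: the existence of a pseudo-dualizing complex for $A$ is equivalent to $A$ being a quotient of a Gorenstein ring.

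No new ingredient is needed beyond these two theorems, so there is no real obstacle; the proof is a two-line bookkeeping argument. The only conceptual point worth remarking is that Theorem \ref{thm:Sharpiffs} was itself deduced from Lemma \ref{lem:FiniteUpperShriek} together with Theorem \ref{thm:SharpKawasaki}, and Theorem \ref{thm:GLOBALdeform} is the main technical result of \S\ref{sec:Lift}; once both are in hand, Theorem \ref{thm:QuotGorLifts} is simply their composite.
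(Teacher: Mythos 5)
Your proposal is correct and is exactly the paper's argument: the paper presents Theorem \ref{thm:QuotGorLifts} as a restatement of Theorem \ref{thm:GLOBALdeform} via the equivalence ``quotient of a Gorenstein ring $\Leftrightarrow$ admits a pseudo-dualizing complex'' from Theorem \ref{thm:Sharpiffs}, with the easy direction being the trivial composition of surjections, just as you say.
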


On the other hand, quotients of Cohen-Macaulay rings were also studied by Kawasaki \cite{Kawasaki-quotient-of-Macaulay}.
Note that the conditions (C1)-(C3) there are equivalent to CM-excellence as in \cite[Definition 1.2]{Macaulay-Cesnavi} (cf. \cite[Remark 1.5]{Macaulay-Cesnavi}).
Therefore \cite[Theorem 1.3]{Kawasaki-quotient-of-Macaulay} and its proof tell us the following.
\begin{Thm}\label{thm:QuotCMiffs}
    Let $A$ be a Noetherian ring.
    Then the followings are equivalent.
    \begin{enumerate}[label=$(\roman*)$]
        \item $A$ is a quotient of a Cohen-Macaulay ring.
        \item There exists a finitely generated $A$-algebra $B$ that is Cohen-Macaulay and admits a section $B\to A$.
        \item $A$ is CM-excellent and admits a codimension function.
    \end{enumerate}
\end{Thm}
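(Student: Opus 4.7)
The plan is to verify the three implications separately. The forward implications (ii)$\Rightarrow$(i) and (i)$\Rightarrow$(iii) are elementary; the substantive content is (iii)$\Rightarrow$(ii), which is Kawasaki's construction and is reused verbatim.

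For (ii)$\Rightarrow$(i): if $s\colon B\to A$ is a section of the structural map $A\to B$, then $s$ is surjective, so $A\cong B/\ker(s)$ is a quotient of the Cohen-Macaulay ring $B$.

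For (i)$\Rightarrow$(iii): suppose $A=R/I$ with $R$ Cohen-Macaulay. I would first note that $R$ is universally catenary and that the height function $\fp\mapsto\HT(\fp)$ on $\Spec(R)$ is a codimension function; both pass to the closed subscheme $\Spec(A)$. For the Cohen-Macaulay property of the formal fibers, the completion map $R_\fp\to \widehat{R_\fp}$ is a faithfully flat map between Cohen-Macaulay local rings, so its fibers are Cohen-Macaulay by standard ascent/descent of the Cohen-Macaulay property in flat maps; this property then descends to $A$. The openness of the Cohen-Macaulay locus in finite-type $A$-algebras reduces to the analogous property for finite-type algebras over the Cohen-Macaulay ring $R$, which I would extract from the discussion in \cite[\S 2]{Macaulay-Cesnavi}.

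For (iii)$\Rightarrow$(ii): this is the content of Kawasaki's Theorem 1.3 in \cite{Kawasaki-quotient-of-Macaulay}. By the equivalence of his hypotheses (C1)--(C3) with CM-excellence noted in \cite[Remark 1.5]{Macaulay-Cesnavi}, the hypothesis (iii) reproduces his setup. The key point is that Kawasaki's proof does not merely produce some Cohen-Macaulay ring surjecting onto $A$; it constructs a finitely generated $A$-algebra $B$ that is Cohen-Macaulay and equipped with a section $B\to A$, by choosing generators of the defining ideal of $B$ to form a suitable Cohen-Macaulayfying sequence over $A$. I would invoke his construction directly.

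The main obstacle is (iii)$\Rightarrow$(ii); it requires Kawasaki's delicate construction and is not simplified here. The other two implications are either immediate from the definition of a section or follow from routine descent arguments for Cohen-Macaulayness along faithfully flat maps and polynomial extensions.
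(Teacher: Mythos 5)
Your proposal is correct and matches the paper's route: the paper likewise disposes of the substantive implication by invoking Kawasaki's Theorem 1.3 \emph{and its proof} (which produces the finite type $A$-algebra with a section), combined with the observation from \cite[Remark 1.5]{Macaulay-Cesnavi} that Kawasaki's hypotheses (C1)--(C3) are exactly CM-excellence. The only difference is that you spell out the routine implications (ii)$\Rightarrow$(i) and (i)$\Rightarrow$(iii), which the paper leaves to the cited sources; your verifications of these (height function as codimension function, CM formal fibers via flat local maps of CM rings, openness of CM loci for quotients of CM rings) are sound.
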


Pham Hung Quy asks the author if the analog of Theorem \ref{thm:QuotGorLifts} holds for Cohen-Macaulay instead of Gorenstein.
We can answer this question affirmatively under some additional assumptions.
\begin{Thm}\label{thm:QuotCMLifts}
    Let $A$ be a Noetherian ring, $I$ an ideal of $A$.
    Assume that $A/I$ is either quasi-excellent, or semilocal and Nagata.
    If $A$ is $I$-adically complete,
    then $A/I$ is a quotient of a Cohen-Macaulay ring if and only if $A$ is. 
\end{Thm}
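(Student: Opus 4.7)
The ``only if'' direction is immediate: a surjection $R \twoheadrightarrow A$ from a Cohen-Macaulay ring $R$ composes with $A \twoheadrightarrow A/I$ to present $A/I$ as a quotient of the same $R$.

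For the ``if'' direction, my plan is to reduce the problem to Theorem \ref{thm:QuotCMiffs} applied to the $I$-adic completion of a lifted section-algebra. By Theorem \ref{thm:QuotCMiffs} applied to $A/I$, I pick a finitely generated $A/I$-algebra $B_0$ that is Cohen-Macaulay and admits a section $s_0: B_0 \twoheadrightarrow A/I$, and lift $(B_0, s_0)$ to a pair $(B, s)$ over $A$ as follows: write $B_0 = (A/I)[y_1, \ldots, y_m]/(\bar f_1, \ldots, \bar f_k)$, choose arbitrary lifts $f_j \in A[y_1, \ldots, y_m]$ of $\bar f_j$ and $a_i \in A$ of $s_0(y_i)$, then replace each $f_j$ by $f_j - f_j(a_1, \ldots, a_m) \in IA[y_1, \ldots, y_m]$. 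The resulting $B := A[y_1, \ldots, y_m]/(f_1 - f_1(\mathbf a), \ldots, f_k - f_k(\mathbf a))$ satisfies $B/IB \cong B_0$ and admits an $A$-algebra section $s: B \twoheadrightarrow A$ sending $y_i \mapsto a_i$ that reduces to $s_0$ modulo $I$. Taking the $IB$-adic completion, $\hat B$ is Noetherian with $\hat B/I\hat B \cong B_0$ Cohen-Macaulay, and $s$ extends to a surjection $\hat s: \hat B \twoheadrightarrow A$. Thus $A$ is a quotient of $\hat B$, and it suffices to show $\hat B$ is itself a quotient of a Cohen-Macaulay ring.

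For this, I would apply Theorem \ref{thm:QuotCMiffs} in the reverse direction to $\hat B$: it suffices to show $\hat B$ is CM-excellent in the sense of \cite[Definition 1.2]{Macaulay-Cesnavi} and admits a codimension function. The codimension function is inherited from $B_0 = \hat B/I\hat B$, which admits one as a Cohen-Macaulay ring; since $I\hat B \subseteq \operatorname{Jac}(\hat B)$, maximal ideals of $\hat B$ correspond bijectively to those of $B_0$, and the function extends via universal catenarity of $\hat B$. The CM-excellence is where the two hypotheses on $A/I$ come into play. If $A/I$ is quasi-excellent, Gabber's formal lifting theorem \cite{formal-lifting-excellence-Gabber} makes $A$ quasi-excellent, hence so are the finite-type $A$-algebra $B$ and, by applying Gabber to the pair $(B, IB)$, its completion $\hat B$; in particular $\hat B$ has regular (hence Cohen-Macaulay) formal fibers. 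Combined with CM-excellence of $\hat B/I\hat B = B_0$, the openness and catenarity criteria in \cite{Macaulay-Cesnavi} propagate CM-excellence to $\hat B$. If $A/I$ is semilocal and Nagata, then $A$ is also semilocal (every maximal ideal contains $I$) and Nagata by Marot's theorem \cite{Marot-Nagata-Lift}, and a parallel argument using openness of the CM locus in Nagata rings with Cohen-Macaulay quotient modulo $I$ yields the same conclusion.

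The main obstacle is the transfer of universal catenarity of $\hat B$: universal catenarity is famously not an $I$-adic lifting property in general \cite{Greco-Universal-Catenary-No-Lift}, so the argument must crucially exploit that $\hat B/I\hat B$ is Cohen-Macaulay (not merely universally catenary with a codimension function) together with the extra quasi-excellent or semilocal-Nagata hypothesis on $A/I$. This is precisely why the theorem is stated with the two case hypotheses rather than in maximal generality.
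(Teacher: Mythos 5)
Your reduction to the completed section-algebra $\hat B$ is harmless in itself (and the ``only if'' direction and the lifting of the presentation $B_0$ to a finite type $A$-algebra $B$ with a section are fine), but the proposal stops exactly where the actual proof has to begin. To apply Theorem \ref{thm:QuotCMiffs} to $\hat B$ (or, as the paper does, directly to $A$) you must produce two things: universal catenarity together with a codimension function, and CM-(quasi-)excellence. For the first you write that the codimension function of $B_0$ ``extends via universal catenarity of $\hat B$'' and later concede that ``the transfer of universal catenarity of $\hat B$'' is ``the main obstacle''; no argument is ever given, and this is precisely the content that cannot be waved away, since by \cite{Greco-Universal-Catenary-No-Lift} universal catenarity does not lift along $I$-adic completeness. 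The paper fills this hole with Lemma \ref{lem:LiftingCodimFunction} and Lemma \ref{lem:liftUCcodim}: one uses that a complete pair is Henselian, so $V(I+\fp)$ (resp.\ $V(IB)$ for a finite cover $B$) is connected, and then the dimension formula forces $\HT(\fn)=\HT(\fn\cap A)$ for all maximal ideals of the cover; the finite covers by universally catenary rings are supplied by normalization or $(S_2)$-ification as in Remark \ref{rem:etaleUC}, which is where CM-quasi-excellence (hence the quasi-excellent or semilocal Nagata hypotheses) enters. None of these ideas appear in your proposal, so the step ``$\hat B$ admits a codimension function and is universally catenary, hence CM-excellent'' is an acknowledged gap, not a proof.

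There is also a more specific defect in the semilocal Nagata case: by passing from $A$ to the finite type algebra $B$ and its completion $\hat B$ you destroy semilocality, and semilocality is exactly what the paper uses (via \cite{Marot-Nagata-Lift} and \cite[Theorem C]{Mur-Grothendieck-localization}) to get Cohen--Macaulay formal fibers and hence CM-quasi-excellence; your ``parallel argument using openness of the CM locus in Nagata rings'' is not an argument, since a Nagata ring that is not semilocal need not have Cohen--Macaulay formal fibers. The cleaner route, which is the paper's, is to not complete anything new: $A$ itself is already $I$-adically complete, one shows $A$ is CM-quasi-excellent (Gabber in the quasi-excellent case, Marot plus Murayama in the semilocal Nagata case), then universal catenarity and a codimension function for $A$ via the Henselian-pair lemmas, and concludes by Theorem \ref{thm:QuotCMiffs} applied to $A$ directly.
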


\begin{Lem}\label{lem:LiftingCodimFunction}
    Let $A$ be a Noetherian ring, $I$ an ideal of $A$.
    Assume that the pair $(A,I)$ is Henselian.

    Assume that $A$ is catenary, and that $A/I$ admits a codimension function $c:\Spec(A/I)\to \bZ$.
    For $\fp\in\Spec(A)$ and $Q\in V(I)$ containing $\fp$,
    let $c(\fp,Q)=c(Q/I)-\HT(Q/\fp)$.
    Then the followings hold.
    \begin{enumerate}[label=$(\roman*)$]
        \item\label{Liftcodim:cWDF} For every $\fp\in\Spec(A)$,
    $c(\fp,Q)$ is independent of the choice of $Q$.
    \item\label{Liftcodim:ciscodim}  The assignment $\fp\mapsto c(\fp,Q)$,
    where $Q\in V(I+\fp)$ is arbitrary,
    is a codimension function on $A$.
    \end{enumerate}
\end{Lem}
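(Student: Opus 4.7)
The plan hinges on two standard facts about the Henselian pair $(A,I)$. First, quotients inherit the Henselian pair structure (\citestacks{0DYD}), so $(A/\fp, I(A/\fp))$ is Henselian; since $A/\fp$ is a domain and has no nontrivial idempotents, lifting of idempotents forces $A/(I+\fp)$ to have no nontrivial idempotents either, making $V(I+\fp)$ connected. Second, $I$ lies in the Jacobson radical of $A$ (\citestacks{09XD}), so $V(I+\fp)$ is nonempty for every proper prime $\fp$ of $A$. These are the only substantive inputs; the remainder is bookkeeping with catenariness.

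For \ref{Liftcodim:cWDF}, I would exploit the following key observation: if $P \subseteq Q$ are comparable primes in $V(I+\fp)$, then the codimension function property of $c$ on $A/I$ (using that $A/I$ is catenary, being a quotient of the catenary ring $A$) and catenariness of $A$ both contribute the same increment $\HT(Q/P)$ when passing from $P$ to $Q$, hence
\[
c(Q/I) - \HT(Q/\fp) \;=\; c(P/I) - \HT(P/\fp),
\]
i.e., $c(\fp,P)=c(\fp,Q)$. Consequently $c(\fp,-)$ is constant on each irreducible component of $V(I+\fp)$ (compare any prime with the generic point of that component), and it takes the same value on any two components that share a prime (compare each generic point with the shared prime). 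Since $V(I+\fp)$ is connected with finitely many irreducible components, the intersection graph of components is connected, so $c(\fp,-)$ takes a single value on all of $V(I+\fp)$.

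For \ref{Liftcodim:ciscodim}, given an immediate specialization $\fp\subsetneq\fq$ in $\Spec(A)$, I would pick any $Q\in V(I+\fq)\subseteq V(I+\fp)$ and use \ref{Liftcodim:cWDF} to evaluate both $c(\fp,-)$ and $c(\fq,-)$ at the same $Q$. Catenariness of $A$ applied to the chain $\fp\subsetneq\fq\subseteq\cdots\subseteq Q$ gives $\HT(Q/\fp) = \HT(Q/\fq)+1$, so $c(\fq,Q)-c(\fp,Q)=1$, as required. The only delicate ingredient is the Henselian-pair argument for connectedness of $V(I+\fp)$; everything else is formal manipulation, and I do not anticipate serious obstacles.
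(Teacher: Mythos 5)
Your proposal is correct and follows essentially the same route as the paper: constancy of $c(\fp,-)$ on comparable primes via catenariness, constancy on all of $V(I+\fp)$ via connectedness coming from the Henselian pair (the paper cites the Stacks lemma that $V(I+\fp)$ is connected directly, while you rederive it from idempotent lifting in the quotient pair and spell out the irreducible-components argument), and then the codimension-function property from catenariness of $A$. The only differences are expository, not mathematical.
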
 
\begin{proof}
    Since $c(-)$ is a codimension function of $A/I$, for $Q\subseteq Q'\in V(I+\fp)$, we have
    $c(\fp,Q')-c(\fp,Q)=\HT(Q'/Q)+\HT(Q/\fp)-\HT(Q'/\fp)$.
    Since $A$ is catenary, the number on the right hand side is $0$.
    Thus for $Q\subseteq Q'\in V(I+\fp)$, we have
    $c(\fp,Q')=c(\fp,Q)$.
    Since $(A,I)$ is Henselian, $V(I+\fp)$ is connected,
    see \citestacks{09Y6}.
    This shows \ref{Liftcodim:cWDF}.
    For \ref{Liftcodim:ciscodim}, note that $V(I+\fp)$ is nonempty for all $\fp\in\Spec(A)$, so our function is well-defined.
    To show it is a codimension function,
    it suffices to show for $\fp\subseteq\fp'\subseteq Q$ where $Q\in V(I)$,
    we have $c(\fp',Q)-c(\fp,Q)=\HT(\fp'/\fp)$.
    This is clear as $A$ is catenary.
\end{proof}

\begin{Lem}\label{lem:liftUCcodim}
    Let $A$ be a Noetherian ring, $I$ an ideal of $A$.
    Assume that the pair $(A,I)$ is Henselian.

    Assume the followings hold.
    \begin{enumerate}[label=$(\roman*)$]
        \item\label{liftUC:fintoUC} For every minimal prime $\fp$ of $A$,
        there exists a finite injective ring map $A/\fp\to B$ such that $B$ is universally catenary.
        \item\label{liftUC:quotisUC} $A/I$ is universally catenary and admits a codimension function.
    \end{enumerate}

    Then $A$ is universally catenary and admits a codimension function.
\end{Lem}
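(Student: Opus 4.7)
The conclusion consists of two parts: $A$ is universally catenary and $A$ admits a codimension function. Given Lemma \ref{lem:LiftingCodimFunction}, the second will follow from the first once $A$ is known to be catenary. So the main task is to show $A$ is universally catenary. The plan is to reduce this to showing $A/\fp$ is universally catenary for each minimal prime $\fp$ of $A$, which is a standard reduction (a Noetherian ring is universally catenary iff each quotient by a minimal prime is). Fix such a $\fp$; by (i) there is a finite injective map $A/\fp \hookrightarrow B$ with $B$ universally catenary, and after replacing $B$ with $B/\fa$ for a minimal prime $\fa$ of $B$ meeting $A/\fp$ trivially (which exists since $A/\fp$ is reduced and injects into $B$), I may assume $B$ is a universally catenary Noetherian domain. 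By the criterion of Remark \ref{rem:etaleUC} (citing \citetwostacks{02IJ}{0AW6}), universal catenarity of $A/\fp$ is equivalent to the height equality $\HT_B(\fq) = \HT_{A/\fp}(\fq \cap A/\fp)$ for every $\fq \in \Spec(B)$.

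To verify this height equality, I would leverage the Henselian hypothesis (iii) together with the structure given by (ii). The pair $(A/\fp, I(A/\fp))$ is Henselian as a quotient of a Henselian pair, and since $A/\fp \to B$ is finite, $(B, IB)$ is a finite product of Henselian pairs by \Tag{09XK}; as $B$ is a domain, $(B, IB)$ is itself Henselian. Now both $A/(I+\fp)$ (by (ii), being a quotient of $A/I$) and $B/IB$ (as a quotient of $B$) are universally catenary, and the induced finite injective extension $A/(I+\fp) \hookrightarrow B/IB$ preserves heights of primes by the dimension formula, which handles the height equality for $\fq \supseteq IB$. For a general $\fq \in \Spec(B)$ with $\fp' = \fq \cap A/\fp$, the sets $V(IB+\fq)$ and $V(I(A/\fp)+\fp')$ are nonempty and connected by Henselianness, so one can compute $\HT_B(\fq)$ by descending from some $\fq^* \in V(IB+\fq)$ using catenarity of the universally catenary $B$, and analogously $\HT_{A/\fp}(\fp')$ by descending from the contraction of $\fq^*$ using catenarity of $A/\fp$ (which itself follows from catenarity of $B$, via going-up and incomparability for the finite integral extension, as saturated chains in $A/\fp$ lift to saturated chains in $B$ of the same length). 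Matching the two computations using the already-established equality on $V(IB)$ gives the desired equality for $\fq$.

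The main obstacle is the last step, where heights of primes outside $V(IB)$ must be related to those inside $V(IB)$ via a careful interaction of Henselian connectedness with the going-up/incomparability for the finite integral extension; it may be cleaner to localize $A/\fp$ at each prime and reduce to a Henselian local setting, where computations via the special fiber $A/I$ (known to be universally catenary with a codimension function) are especially direct. Once universal catenarity of $A$ is established, Lemma \ref{lem:LiftingCodimFunction} applied with hypotheses (ii) and (iii) yields the codimension function, completing the proof.
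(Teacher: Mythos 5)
Your opening reductions (pass to $A/\fp$ for each minimal prime, replace $B$ by a quotient domain, aim for the height criterion of Remark \ref{rem:etaleUC}, and get the codimension function afterwards from Lemma \ref{lem:LiftingCodimFunction}) all match the paper. But the core step---verifying the height equality---has a genuine gap. The parenthetical claim that catenarity of $A/\fp$ follows from catenarity of $B$ ``via going-up and incomparability, as saturated chains in $A/\fp$ lift to saturated chains in $B$ of the same length'' is false. Lifting does work, but two saturated chains in $A/\fp$ between the \emph{same} pair of primes lift to saturated chains in $B$ whose endpoints are in general \emph{different} primes of $B$ lying over them, and catenarity of $B$ says nothing about chains between different pairs. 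Catenarity does not descend along finite injective extensions of Noetherian domains: Nagata's classical non-catenary Noetherian local domain is of the form $k+(\fm_1\cap\fm_2)$ inside a catenary semilocal domain that is a finite module over it. Since your computation of $\HT_{A/\fp}(\fp')$ ``by descending from the contraction of $\fq^*$'' invokes catenarity of $A/\fp$, the argument is circular precisely where the hypotheses (codimension function on $A/I$, Henselian pair) must do the work.

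Even the portion you treat as settled is not: the dimension formula applied to $A/(I+\fp)\hookrightarrow B/IB$ (whose target is not a domain, by the way) controls heights of $\fq/IB$, not $\HT_B(\fq)$ versus $\HT_{A/\fp}(\fq\cap (A/\fp))$, which is what the criterion needs; and your sketch never uses the codimension function, the only tool that allows comparison across all of $V(IB)$. The paper's mechanism is to show that $\delta(Q)=c((Q\cap A)/I)-\HT(Q)$ is constant on the connected set $V(IB)$ (codimension function, dimension formula over the universally catenary $A/I$ applied to the domains $A/(Q\cap A)\to B/Q$, and catenarity of the domain $B$), and then to anchor the constant at maximal ideals: every maximal ideal $\fn$ of $B$ contains $IB$, all $\fn$ over a fixed maximal ideal $\fm$ of $A$ get the same height, and $\max_{\fn\cap A=\fm}\HT(\fn)=\HT(\fm)$ forces $\HT(\fn)=\HT(\fn\cap A)$. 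The criterion of Remark \ref{rem:etaleUC} is then applied only at maximal ideals, so the equality at non-maximal primes---your acknowledged ``main obstacle''---is never needed. As written you leave that obstacle unresolved, and the suggested fix of localizing at a prime destroys the Henselian pair hypothesis, so the proposal does not yet constitute a proof.
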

\begin{proof}
    By Lemma \ref{lem:LiftingCodimFunction},
    it suffices to show $A$ is universally catenary.
    We may therefore assume $A$ is an integral domain and there exists a finite injective ring map $A\to B$ such that $B$ is universally catenary.
    We may also assume $B$ is an integral domain.
    Let $c:\Spec(A/I)\to\bZ$ be a codimension function.
    Let $\delta(Q)=c((Q\cap A)/I)-\HT(Q)$ for $Q\in V(IB)$.
    For $Q\subseteq Q'\in V(IB)$,
    we have 
    \begin{align*}
        c((Q'\cap A)/I)-c((Q\cap A)/I)&=\HT((Q'\cap A)/(Q\cap A))\\
        &=\HT(Q'/Q)\\
        &=\HT(Q')-\HT(Q)
    \end{align*}
    where the first identity is because $c$ is a codimension function,
    the second identity follows from the dimension formula \citestacks{02IJ} as $A/I$ is universally catenary,
    and the third identity is because $B$ is a catenary domain.
    Therefore $\delta(Q')=\delta(Q)$.
    Since the pair $(B,IB)$ is Henselian \citestacks{09XK},
    $V(IB)$ is connected, see \citestacks{09Y6}.
    Thus $\delta$ is a constant function on $V(IB)$.
    Therefore, for all maximal ideals $\fn$ of $B$,
    which necessarily contains $IB$, $\HT(\fn)$ depends only on $\fn\cap A$,
    and thus must equal to $\HT(\fn\cap A)$.
    As discussed in Remark \ref{rem:etaleUC},
    we see $A$ is universally catenary,
    as desired.
\end{proof}
\begin{proof}[Proof of Theorem \ref{thm:QuotCMLifts}]
    Assume that $A$ is $I$-adically complete, $A/I$ is a quotient of a Cohen-Macaulay ring, and $A/I$ is either quasi-excellent or semilocal and Nagata.
    Note that $(A,I)$ is a Henselian pair \citestacks{0ALJ}.

    If $A/I$ is quasi-excellent, then $A$ is quasi-excellent \cite{formal-lifting-excellence-Gabber},
    in particular CM-quasi-excellent.
    If $A/I$ is semilocal and Nagata,
    then $A$ is semilocal and Nagata \cite{Marot-Nagata-Lift},
    and thus has Cohen-Macaulay formal fibers by \cite[Theorem C]{Mur-Grothendieck-localization}.
    Then $A$ is CM-quasi-excellent, see \cite[Proposition 7.3.18]{EGA4_2}.
    Consequently, in both cases, $A$ is CM-quasi-excellent.
    
    Now $A$ satisfies the condition \ref{liftUC:fintoUC} in Lemma \ref{lem:liftUCcodim} by the discussions in Remark \ref{rem:etaleUC},
    thus is universally catenary and admits a codimension function by Lemma \ref{lem:liftUCcodim}.
    By Theorem \ref{thm:QuotCMiffs},
    $A$ is a quotient of a Cohen-Macaulay ring.
\end{proof}
\printbibliography
\end{document}